\pgfplotsset{compat=newest}
\theoremstyle{plain}
\newtheorem{theorem}{Theorem}[section]
\newtheorem{corollary}[theorem]{Corollary}
\newtheorem{lemma}[theorem]{Lemma}
\newtheorem*{mainresult*}{Main Result}
\newtheorem*{maincorollary*}{Main Corollary}
\newtheorem{assumption}[theorem]{Assumption}
\theoremstyle{definition}
\newtheorem{definition}[theorem]{Definition}
\newtheorem{remark}[theorem]{Remark}
\newtheorem{example}[theorem]{Example}
\numberwithin{equation}{section}
\newcommand{\linspan}{\mathop{\rm span}\nolimits}
\newcommand{\Ker}{\mathop{\rm Ker}\nolimits}
\newcommand{\rest}{\left.\kern-2\nulldelimiterspace\right|_}
\newcommand{\norm}[2]{\left|#1\right|_{#2}}
\newcommand{\Id}{{\mathbf1}}
\newcommand{\indf}{1}
\newcommand{\ex}{\mathrm{e}}
\newcommand{\p}{\partial}
\newcommand{\ed}{\mathrm d}
\newcommand*{\Bigcdot}{\raisebox{-.25ex}{\scalebox{1.25}{$\cdot$}}}
\newcommand{\clB}{{\mathcal B}}
\newcommand{\clC}{{\mathcal C}}
\newcommand{\clD}{{\mathcal D}}
\newcommand{\clE}{{\mathcal E}}
\newcommand{\clK}{{\mathcal K}}
\newcommand{\clL}{{\mathcal L}}
\newcommand{\clN}{{\mathcal N}}
\newcommand{\clP}{{\mathcal P}}
\newcommand{\clU}{{\mathcal U}}
\newcommand{\bbN}{{\mathbb N}}
\newcommand{\bbR}{{\mathbb R}}
\newcommand{\fkB}{{\mathfrak B}}
\newcommand{\rmD}{{\mathrm D}}
\newcommand{\bfn}{{\mathbf n}}
\newcommand{\rmd}{{\mathrm d}}
\newcommand{\fki}{{\mathfrak i}}
\newcommand{\fkm}{{\mathfrak m}}
\newcommand{\ovlineC}[1]{\overline C_{\left[#1\right]}}
\definecolor{DarkBlue}{rgb}{0,0.08,0.45}
\definecolor{DarkRed}{rgb}{.65,0,0}
\definecolor{applegreen}{rgb}{0.55, 0.71, 0.0}
\newcounter{mymac@matlab}
\newcommand{\matlab}{MATLAB%
   \ifnum\value{mymac@matlab}<1%
   \textregistered%
   \setcounter{mymac@matlab}{1}%
   \fi%
  }
\newcommand{\black}{ \color{black} } 
\newcommand{\blue}{ \color{blue} }
\begin{document}
\title{ Learning an optimal feedback operator semiglobally stabilizing semilinear parabolic equations}
\author{Karl Kunisch$^{\tt1,2}$}
\author{S\'ergio S.~Rodrigues$^{\tt1}$}
\author{Daniel Walter$^{\tt1}$}
\thanks{
\vspace{-1em}\newline\noindent
{\sc MSC2020}: 93D15, 68Q32, 35K91.\black
\newline\noindent
{\sc Keywords}: exponential stabilization, learning nonlinear optimal control, deep neural networks,
 projection feedback, semilinear parabolic equations
\newline\noindent
$^{\tt1}$ Johann Radon Institute for Computational and Applied Mathematics,
   Altenbergerstr. 69, 4040 Linz, Austria.
\newline
$^{\tt2}$ Institute for Mathematics and Scientic Computing,
 Karl-Franzens-Universit\"at,
 Heinrichstr. 36, 8010 Graz, Austria.
\newline
{\sc Emails}:
{\small\tt  karl.kunisch@uni-graz.at},\quad ({\small\tt sergio.rodrigues,daniel.walter)@oeaw.ac.at}
 }

\begin{abstract}
 Stabilizing feedback operators are presented which depend only on the orthogonal projection of
the state onto the finite-dimensional control space.\black A class of monotone feedback operators mapping the finite-dimensional
control space into itself is considered.  The special case of the scaled identity operator\black is included.
Conditions are given on the set of actuators and on the magnitude of the monotonicity,
which guarantee the semiglobal stabilizing property of the feedback for a class semilinear parabolic-like equations.
 Subsequently an optimal feedback control minimizing the  quadratic energy cost
is computed by a deep neural network,  exploiting the fact that the feedback depends only on a finite dimensional
component of the state. Numerical simulations demonstrate the stabilizing
performance of explicitly scaled orthogonal projection feedbacks, and of  deep neural network feedbacks.
\end{abstract}

\maketitle

\pagestyle{myheadings} \thispagestyle{plain} \markboth{\sc K. Kunisch, S. S.
Rodrigues, and D. Walter}{\sc learning optimal stabilizing feedback for semilinear parabolic equations}

\section{Introduction}

We consider evolutionary semilinear parabolic-like equations, for time~$t\ge0$, as
 \begin{align}\label{sys-y-intro}
 \dot y +Ay+A_{\rm rc}y +\clN(y) =0,\qquad y(0)= y_0,
\end{align}
evolving in a Hilbert space~$H$,  where~$A$ and~$A_{\rm rc}=A_{\rm rc}(t)$ are
a time-independent linear diffusion-like operator and a
time-dependent linear reaction-convection-like operator. Further, $\clN=\clN(t)$
is a time-dependent nonlinear operator for which conditions will be chosen so that
existence and uniqueness of weak solutions for system~\eqref{sys-y-intro} hold true
for sufficiently small time $t\in[0,\tau)$, $0<\tau\le+\infty$.

System~\eqref{sys-y-intro} can be unstable, meaning  that the norm~$\norm{y(t)}{H}$
of its solution may diverge exponentially to~$+\infty$
as~$t\to\tau^-$.  This  motivates the investigation of  feedback stabilization
of~\eqref{sys-y-intro}. We shall concentrate on the case where this can be achieved by finitely many actuators.

Our  first goal consists in  semiglobal exponential stabilization  of the system.  More precisely,
for arbitrary  $R>0$ and~$\mu>0$, we will prove that for a suitable~$\lambda>0$,  a set of actuators
\[
{U_{M}}\coloneqq\{\Phi_j\mid 1\le j\le M_\sigma\}\subset H,
\qquad \clU_M\coloneqq\linspan U_M,\qquad\dim{\clU_{M}}=M_\sigma,
\]
with $M_\sigma \in \mathbb{N}$, and a suitable (possibly, time-dependent)\black
feedback operator~$\clK_M(t,\Bigcdot)\colon\clU_M\to\clU_M$, the  solution of
\begin{align}\label{sys-y-intro-K}
 \dot y +Ay+A_{\rm rc}y +\clN(y) =\clK_M(t,P_{\clU_{M}} y),\black\qquad y(0)= y_0,
\end{align}
satisfies
\begin{align}\label{goal-intro}
 \norm{y(t)}{H}\le \ex^{-\mu (t-s)}\norm{y(s)}{H},\quad\mbox{for all}\quad t\ge s\ge0\quad
 \mbox{and all}\quad y_0\in H
 \quad\mbox{with}\quad\norm{y_0}{H}<R.
\end{align}
Above,~$M_\sigma=\sigma(M)$ is a
 strictly increasing sequence of positive integers.
Note that at each instant of time the control input is a linear combination of the actuators in the form
\begin{equation}\label{Feed-o-lc-intro}
\clK_M(t,P_{\clU_{M}} y(t))
=\textstyle\sum\limits_{j=1}^{M_\sigma}u_j(t)\Phi_j,
\end{equation}
for ~$(u_1(t),u_2(t),\cdots,u_{M_\sigma}(t))\in \bbR^{M_\sigma}$.

\begin{remark}
 For the moment the reader\black may think of~$M_\sigma=M$. The reason we consider a general
 subsequence~$M_\sigma=\sigma(M)$ is due to the fact that
 in applications of the theoretical results to concrete examples
 of parabolic equations it is more convenient to have a result
 stated for a general subsequence, as we will see later in Section~\ref{S:parabolic-OK}
 where we will take the subsequence~$M_\sigma=\sigma(M)=M^d$,
 for a parabolic equation evolving in a rectangular spatial domain~$\Omega\subset\bbR^d$.
 \end{remark}

\begin{definition}\label{D:monot}
For a given time-dependent operator
$\clK_M(t,\Bigcdot)\colon\clU_{M}\to \clU_{M}$,
we write $\clK_M\preceq-\overline\lambda\Id$, if the
inequality~$(\clK_M(t, p),p)_H\le -\overline\lambda\norm{p}{H}^2$ holds
for  every~$p\in\clU_M$, and almost every~$t>0$.
\end{definition}

We consider a
 general class of feedback  laws in the form~$\clK_{M}\circ P_{\clU_M}$, satisfying
\begin{subequations}\label{FeedK-intro}
\begin{align}
 &\clK_{M}:[0,\infty)\times \clU_{M}\to  \clU_{M},\quad&&\clK_{M}(t,\Bigcdot)\in\clC(\clU_{M},\clU_{M}),\\
&\clK_{M}(t,0)=0,\quad&&\clK_{M}(t,\cdot)\preceq-\overline\lambda\Id,\quad\overline\lambda>0.
\end{align}
\end{subequations}

 Under suitable assumptions on~$(A,A_{\rm rc},\clN,\clU_{M})$ and
 an extra regularity assumption on~$\clK_M$,
 which will be precised
 in Section~\ref{S:assumptions} we will have the following result.
\begin{mainresult*}
 Let~$R>0$, and~$\mu>0$ be given. Let~$\clK_M\le-\overline\lambda\Id$.
 Then, if~$M\in\bbN_0$ and~$\overline\lambda>0$  are large enough,
the solution of~\eqref{sys-y-intro-K}
  satisfies~\eqref{goal-intro}
In particular, $t\mapsto \norm{y(t)}{H}$ is strictly decreasing at time~$t=s$, if~$\norm{y(s)}{H}\ne0$.
\end{mainresult*}
Note also that the operator~$\clK_{M}(t,\cdot)$ in~\eqref{FeedK-intro} may be nonlinear. We give now examples
of such operators which are linear. The simplest class of such feedbacks consists  of the scaled
identity operator:
for each~$\lambda>0$, the operator
\begin{equation}\label{Feed-orth-K-intro}
\breve\clK_M^{\lambda}\coloneqq-\lambda \Id,
\end{equation}
is in the class~\eqref{FeedK-intro} above with~$\overline\lambda=\lambda$, since
$(\breve\clK_M^{\lambda}(t, p),p)_H= -\lambda\norm{p}{H}^2$.

A second  class of examples consists, as we shall see in Section~\ref{sS:RmksExplicitFeed}, of the operators
\begin{equation}\label{Feed-rho-K-intro}
\clK_M^{\lambda,\rho}\coloneqq-\lambda P_{\clU_{M}}^{\widetilde \clU_{M}^\perp}
A^\rho P_{\widetilde \clU_{M}}^{\clU_{M}^\perp}\Bigr|_{\clU_M}
\end{equation}
with~$\rho\le1$, where $P_{\clU_{M}}^{\widetilde \clU_{M}^\perp}\colon H\to\clU_{M}$
denotes (a suitable extension of) the oblique projection in~$H$ onto~$\clU_{M}\coloneqq\linspan{U_{M}}$ along
the orthogonal~$\widetilde \clU_{M}^\perp$ to the
space~$\widetilde \clU_{M}\coloneqq\linspan{\widetilde U_{M}}$, where~${\widetilde U_{M}}$ is a set of
auxiliary functions
\[
{\widetilde U_{M}}\coloneqq\{\widetilde \Phi_j\mid 1\le j\le M_\sigma\}\subset V,
\qquad \widetilde\clU_M\coloneqq\linspan\widetilde U_M,\qquad\dim{\widetilde\clU_{M}}=M_\sigma,
\]
where~$V\subset H$ is a Hilbert space.
It is assumed  that~$H=\clU_{M}\oplus \widetilde\clU_{M}^\perp$, and  $P_{\widetilde \clU_{M}}^{\clU_{M}^\perp}\colon H\to\clU_{M}$ stands for
the oblique projection in~$H$ onto~$\widetilde \clU_{M}$ along~$\clU_{M}^\perp$.

Here
we just consider briefly the case~$\rho=0$. Recalling that~$(P_{\clU_{M}}^{\widetilde \clU_{M}^\perp})^*=P_{\widetilde \clU_{M}}^{\clU_{M}^\perp}$,
for~$p\in\clU_M$ we find\black
\begin{align*}
(\clK_M^{\lambda,\rho} p,p)_H =
(-\lambda P_{\clU_{M}}^{\widetilde \clU_{M}^\perp} P_{\widetilde \clU_{M}}^{\clU_{M}^\perp} p,p)_H
=-\lambda\norm{P_{\widetilde \clU_{M}}^{\clU_{M}^\perp}p}{H}^2
\le -\lambda\norm{P_{\clU_{M}}P_{\widetilde \clU_{M}}^{\clU_{M}^\perp}p}{H}^2
=-\lambda\norm{P_{\clU_{M}}p}{H}^2
=-\lambda\norm{p}{H}^2,
\end{align*}
 thus\black~$\clK_M^{\lambda,0}$ is monotone with~$\overline\lambda=\lambda$.

As a consequence of the  Main Result above and of the following identities
\begin{align*}
 \breve\clK_M^{\lambda}(P_{\clU_M}y)&=-\lambda P_{\clU_{M}}y,\\
 \clK_M^{\lambda,\rho}(P_{\clU_M}y)&=-\lambda P_{\clU_{M}}^{\widetilde \clU_{M}^\perp} A^\rho
 P_{\widetilde \clU_{M}}^{\clU_{M}^\perp}P_{\clU_M}y
 =-\lambda P_{\clU_{M}}^{\widetilde \clU_{M}^\perp} A^\rho  P_{\widetilde \clU_{M}}^{\clU_{M}^\perp}y,
\end{align*}
we have  the following. \black
\begin{maincorollary*}
 Let~$\rho\le1$, ~$R>0$, and~$\mu>0$ be given. Then,  if~$M\in\bbN_0$ and~$\lambda>0$  are large enough,
the solutions of
\begin{align}\label{sys-y-intro-KId}
 \dot y +Ay+A_{\rm rc}y +\clN(y) =-\lambda P_{\clU_{M}} y,\qquad y(0)= y_0,
\end{align}
and
\begin{align}\label{sys-y-intro-KId-rho}
 \dot y +Ay+A_{\rm rc}y +\clN(y) =-\lambda P_{\clU_{M}}^{\widetilde \clU_{M}^\perp}
 A^\rho P_{\widetilde \clU_{M}}^{\clU_{M}^\perp} y,\qquad y(0)= y_0,
\end{align}
 both satisfy
 \[
  \norm{y(t)}{H}\le\ex^{-\mu (t-s)}\norm{y(s)}{H},\quad\mbox{for all}\quad t\ge s\ge0
  \quad\mbox{and all}\quad y_0\in H\quad\mbox{with}\quad \norm{y_0}{H}<R.
 \]
In particular, $t\mapsto \norm{y(t)}{H}$ is strictly decreasing at time~$t=s$, if~$\norm{y(s)}{H}\ne0$.
\end{maincorollary*}

 \begin{remark}\label{R:orth-new}
Note that Main Result and Main Corollary are semiglobal stabilizability results, that is, we can stabilize the system for any
given initial condition, but the number~$M_\sigma$ of needed actuators and
the magnitude~$\overline\lambda$ of the monotonicity
of the feedback operator
may depend  on the norm of the initial condition. Main Result and Main Corollary will be made precise  in
 Section~\ref{S:stab}, in Theorem~\ref{T:main} and Corollary~\ref{C:main-expli-1}, respectively, including the
 assumptions on~$(A,A_{\rm rc},\clN,\clU_{M},\widetilde\clU_{M})$.
 Main Corollary is novel for nonlinear systems.
 With the feedback as in~\eqref{sys-y-intro-KId} it  is novel also for
 linear systems.
 Main Result with the feedback as in~\eqref{sys-y-intro-KId-rho} has been shown  in~\cite{Rod_pp20-CL}
 for linear systems for the cases~$\rho\in\{0,1\}$.
\end{remark}

Our second goal consists in designing a computational approach which leads to an
optimal feedback law with a  feedback operator $\clK_{M}$ of the form ~\eqref{FeedK-intro}.
Since optimizing with respect to the nonlinear operator $\clK_{M}$ is numerically unfeasible,
a parametrization  $\clK_{M}^{(\theta)}$ based on neural network techniques is chosen.
 It will be constructed by a  variational problem and
 will have the property that\black\!\!
\begin{align}\label{sys-y-intro-Ktheta}
 \dot y +Ay+A_{\rm rc}(t)y +\clN(t,y) = \clK_M^{(\theta)}\left(t,P_{\clU_{M}} y\right)
,\qquad y(0)= y_0,
\end{align}
is exponentially stable.

\subsection{Towards taming the curse of dimensionality}\label{sS:taming-intro}
{ The achievement of} stabilization by finite-dimensional rather than infinite-dimensional
 controls is a significant first step towards simplification of feedback controls both
 for practical as well as for computational considerations. There is, however,
 still a second feature for the feedback operators we are considering. It  becomes
 evident when comparing our
  operators
 \begin{equation}\label{FeedtPA-intro}
y(t)\mapsto \clK_M\left(t,P_{\clU_{M}} y\right),
\end{equation}
to those
 in the more general form
\begin{equation}\label{FeedK-intro-H}
\widehat\clK_{M}(t,\Bigcdot)\colon H\to \clU_{M},\qquad y(t)\mapsto\widehat\clK_{M}(t,y(t)).
\end{equation}
Namely our control  input~$\clK_M\left(t,P_{\clU_{M}} y(t)\right)$, at time~$t\ge0$,
only depends on the projection~$P_{\clU_M}y(t)$ of the state, rather than on all of $y(t)$.
Hence, it is  completely defined by the values it takes at the vectors in the
finite dimensional space~$\clU_M$.\black This
has significant consequences for the computational complexity, as we shall point out below.

A feedback law in the form~\eqref{FeedK-intro-H} is presented in~\cite{Rod20}, which stabilizes
semiglobally nonautonomous parabolic equations of the form~\eqref{sys-y-intro}.
The computation of the optimal feedback operator in the general  form~\eqref{FeedK-intro-H}, for
the classical  quadratic cost functional, leads us to the  Hamilton--Jacobi--Belmann equations.
After spatial discretization of the state equation into a  system of a $N$ ordinary differential
equations, this results in a  partial differential equation with dimension $N$, which is extremely
challenging or impossible to solve in practice, see e.g. \cite{B19, DKK19}, and for this reason,
 alternative approaches are important.

To briefly reflect on the computational complexity, we consider the autonomous case,
where the optimal  feedback operator is known to be independent of the  time variable.
Thus, we are looking for an optimal control vector in~$\bbR^{M_\sigma}$
for each state vector in~$\bbR^{N}$.
Taking a bounded grid box $[-L,L]^N$ centered at zero with~$G_x+1$ points in each spatial
direction results in ~$(G_x+1)^N$ points.  Computing and saving the feedback control means
to compute and save an array of dimension ~${M_\sigma\times (G_x+1)^N}$ making the
computation unfeasible for  finite element approximations where N is large.
This issue is usually refereed to as {\em the curse of dimensionality}.
Notice, for example, if we only take the  vertices and center  of the
box so that~$G_x=2$, and~$N=60$ we find that $(G_x+1)^N=3^{60}>(27)^{20}$ which
is  already a large number (e.g., larger than the inverse of the MATLAB (accuracy)
epsilon $\tt eps\approx 10^{-16}$).

 This dependence on the dimension~$N$, can be avoided by using the structure of the
feedback we propose,~$\clK_M\left(t,P_{\clU_{M}}\Bigcdot\right)$,
which allow us to focus on a
finite-dimensional vector space~$\clU_{M}$.  Note that~$\clU_{M}$ is independent of the discretization.
After we have proven that feedbacks of the form~\eqref{FeedK-intro},
are stabilizing system~\eqref{sys-y-intro-K},
we look for an optimal feedback~$\clK_{M}^\bullet\colon \clU_{M}\to \clU_{M}$ in the form~\eqref{FeedK-intro},
which we need to compute, for vectors in the finite dimensional space~$\clU_M$.
Note that this implies that, after discretization of (a box $[-L,L]^{M_\sigma}$ in) the subspace~$\clU_M$,
we shall be looking for an array of dimension ~$ M_\sigma\times (G_x+1)^{M_\sigma}$.
The size of the arrays increases exponentially with respect to  the number of actuators,
but it is independent of the number~$N$
of degrees of freedom of the finite element discretization.
Therefore, if the number of actuators is relatively small, the computation of the optimal
control becomes feasible\black for finite-element discretizations of the state-equation.
Thus the classical {\em curse of dimensionality} depending on the accuracy of
the discretization has been tamed, by considering the subclass~$\clK_{M}\colon\clU_{M}\to \clU_{M}$,
as  in~\eqref{FeedK-intro}.
 But, if~$ M_\sigma\times (G_x+1)^{M_\sigma}$ becomes prohibitively large,\black then further steps have to be taken.

Concerning the computation of (sub-)optimal  feedbacks in the subspace~$\clU_{M}$
one approach is to interpolate the approximations computed at each single point $p$ of the grid,
by solving the corresponding open loop optimal control problems for ``large'' finite-time horizons.
Here we follow a different approach by computing the feedback through
training of the neural network, which  is used to approximate $\clK_{M}$
following a technique introduced in ~\cite{KunischWalter_arx20}.

 \subsection{Illustrating example.}\label{sS:illust-parabolic}
The stabilizability results will be guaranteed  under general assumptions on the system
operators~$A$, $A_{\rm rc}$, and~$\clN$.  They are satisfied, in particular,
for the following semilinear scalar parabolic equation, under either Dirichlet or Neumann boundary conditions.
It will follow that for arbitrary ~$R>0$,  the system
\begin{subequations}\label{sys-y-parab-intro}
\begin{align}
 &\tfrac{\p}{\p t} y +(-\nu\Delta+\Id) y+ay +b\cdot\nabla y - \norm{y}{\bbR}y
 =-\lambda P_{\clU_{M}} y,\\
  &\fkB y\rest{\Gamma}=0,\quad
y(0)=y_0,\qquad \norm{y_0}{V}\le R,
       \end{align}
 \end{subequations}
is exponentially stable, for large enough~$\lambda$ and~$M$, depending on~$R$.
Above we can also take more general polynomial-like nonlinearities, as we shall see later on.
Here the state~$y$ is assumed to be defined in a bounded connected  spatial domain
~$\Omega\in\bbR^d$ assumed to be either
smooth or a convex polygon, $d\in\{1,2,3\}$.\black The state
is a function $y=y(x,t)$, defined for~$(x,t)\in  \Omega\times(0,+\infty) $.\black
The operator~$\fkB$ imposes the conditions
at the boundary~$\Gamma=\p\Omega$ of~$\Omega$, where
\begin{align}
  \fkB &=\Id,&&\quad\mbox{for Dirichlet boundary conditions},\\
  \fkB &=\bfn\cdot\nabla=\tfrac{\p}{\p\bfn},&&\quad\mbox{for Neumann boundary conditions,}
\end{align}
and~$\bfn=\bfn(\overline x)$ stands for the outward unit normal vector to~$\Gamma$,
at~$\overline x\in\Gamma$. The functions $a$ and $b$ depend on space and time, and are assumed to satisfy
\begin{align}
 & a\in L^\infty(\Omega\times(0,+\infty))\quad\mbox{and}\quad b\in L^\infty(\Omega\times(0,+\infty))^d.\label{assum.abf.parab}
\end{align}
By defining,  for Dirichlet, respectively Neumann boundary conditions, the spaces
\begin{align*}
H^2_\fkB(\Omega)&\coloneqq\{h\in  H^2(\Omega)\mid \fkB h\rest\Gamma=0\},\mbox{ for }
\fkB\in\{\Id,\tfrac{\p}{\p\bfn}\},
\intertext{and}
V_\Id(\Omega)&\coloneqq \{h\in  H^1(\Omega)\mid h\rest\Gamma=0\},\qquad
V_{\frac{\p}{\p\bfn}}(\Omega)\coloneqq H^1(\Omega),
\end{align*}
system ~\eqref{sys-y-parab-intro} can be expressed in the form~\eqref{sys-y-intro-K}.
For this purpose, we set
\[
 H\coloneqq L^2(\Omega),\quad V\coloneqq V_{\fkB},\quad\mbox{and}\quad \rmD(A)\coloneqq H^2_\fkB(\Omega),
\]
and the operators~$ A\colon\rmD(A)\to H$, ~$ A_{\rm rc}\colon V\to H$,   and~$\clN\colon\rmD(A)\to H$ as
\[
 A\coloneqq -\nu\Delta+\Id,\quad A_{\rm rc}\coloneqq a\Id +b\cdot\nabla,
 \quad\mbox{and}\quad \clN(y)\coloneqq - \norm{y}{\bbR}y.
\]

 It remains to choose the set of actuators~$U_{M}$. To guarantee stability,
 they will need  to satisfy an appropriate ``richness" condition. We stress  that without control system~\eqref{sys-y-parab-intro} may have solutions which blow up
in finite time; see for example~\cite{Levine73,Ball77,MerleZaag98}.

Our motivation to consider the signed nonlinearity~$-\norm{y}{\bbR}y$ is mainly academic and is
 due to the fact that it is actively pushing towards  destabilization of the system. It will always be competing with the stabilizing
 feedback operator, which will be actively pushing towards the stabilization of the system. Indeed,
 by multiplying~\eqref{sys-y-parab-intro} by~$2y$, we find that
 \[
 \tfrac{\rmd}{\rmd t}\norm{y}{H}^2=-2\langle(-\nu\Delta+\Id) y+ay +b\cdot\nabla y,2y\rangle_{V',V}
 -2\langle-\norm{y}{\bbR}y,y\rangle_{V',V}
 -2\langle\lambda P_{\clU_{M}} y,y\rangle_{V',V}
 \]
 and since~$-2\langle\lambda P_{\clU_{M}} y,y\rangle_{V',V}=-2\lambda \norm{P_{\clU_{M}} y}{H}^2<0$, we
 see that the feedback operator contributes towards a decrease of the norm~$\norm{y}{H}^2$, while
 the nonlinearity contributes towards an increase of the same norm,
 since~$-2\langle-\norm{y}{\bbR}y,y\rangle_{V',V}=2\int_{\Omega}\norm{y}{\bbR}^3\rmd\Omega>0$.
Hence, this  nonlinearity can be seen as
 a ``good test'' for the stabilizing feedback operator.

 Our results will cover polynomial   nonlinearities, up to order 2 for $d$-dimensional spatial domains, with $d\in\{1,2,3\}$. Such nonlinearities appear in  population dynamics models, as the Fisher
 model~\cite{Fisher37,OlmosShizgal06} for biology
 gene propagation with the nonlinearity~$y(y-1)$
  and the Shigesada--Kawasaki--Teramoto model~\cite{ShigesadaKawaTera79,Le20}
  for two interacting biological species, whose nonlinearity includes a (vector) polynomial  of degree~$2$. We can also mention the Schl\"ogl
 model for chemical reactions with a cubic nonlinearity~$y(y-c_1)(y-c_2)$~\cite{Schloegl72,GugatTroeltzsch15}. Nonlinearities involving the absolute value of the state appear in the Ginzburg-Landau model
for the hydrodynamics of vortex
fluids in superconductors, with nonlinearity having the leading term~$\norm{y}{\bbR}^2y$; see~\cite{E94,GrishakovDegtDegtElesinKruglov12}.
Such cubic nonlinearities are covered by our results
for~$1$-dimensional spatial domains.

\subsection{On the literature}
We discuss a list of works on the stabilization problem we investigate.
The list is not exhaustive and thus we refer the reader also to the references within the listed works.

In the case of autonomous systems the spectral properties of the time-independent
operator~$A+A_{\rm rc}$ can be used to derive stabilizability results by means of a finite number of internal
actuators as, see ~\cite{BarbuTri04} for example. Though we do not address,
in the present manuscript the case of boundary controls, again spectral properties can be used
to derive stabilizability results by means of a finite number
of boundary actuators, see~\cite{BarbuLasTri06,Barbu12,BadTakah11,Raymond19}.

In the case of nonautonomous systems the spectral properties are not (or seem not to be)
an appropriate tool to
investigate stabilizability properties as suggested by the examples in~\cite{Wu74}.
In~\cite{BarRodShi11} suitable truncated observability inequalities are used to derive
open-loop stabilizability results for
nonautonomous systems, and subsequently it is shown that a stabilizing feedback operator
can be obtained through the solution of
an operator differential Riccati equation.
Since the computation of such Riccati equations can be a difficult numerical task,
 the feedback operators in~\eqref{sys-y-intro-KId}, being explicitly given, are attractive for applications.\black
 Explicit feedbacks for stabilization of nonautonomous parabolic equations involving oblique
 projections were introduced in~\cite{KunRod19-cocv} for linear systems. The proposed feedback control is given by
\begin{align}\label{FeedKunRod}
 \overline\clK_{M}=P_{\clU_{M}}^{\widetilde\clU_{M}^\perp}
 \Bigl(Ay+A_{\rm rc}y-\lambda y\Bigr),
 \end{align}
and the auxiliary space~$\widetilde\clU_{M}=\clE_{M}$ is spanned by a  set of~$M_\sigma$
eigenfunctions of the diffusion operator~$A$.
An analogous linear feedback operator is used in~\cite{KunRod19-dcds} to stabilize coupled
parabolic-{\sc ode} systems, and
in~\cite{AzmiRod20} to stabilize damped wave equations.
In~\cite{Rod20}, the analogous feedback
\begin{align}\label{FeedRod-eect}
 \overline\clK_{M}=P_{\clU_{M}}^{\widetilde\clU_{M}^\perp}
 \Bigl(Ay+A_{\rm rc}y+\clN(y)-\lambda y\Bigr),
 \end{align}
is proven to stabilize semilinear parabolic equations, for ~$M$ large enough, depending on the norm~$\norm{y_0}{V}$ of the initial condition.
In~\cite{Rod_pp20-CL} it is proven that  the feedbacks as in~\eqref{sys-y-intro-KId-rho},\black
\begin{align}\label{FeedRod-CL}
\clK_M^{\lambda,\rho}(P_{\clU_M}y)=-\lambda P_{\clU_{M}}^{\widetilde \clU_{M}^\perp}
A^\rho P_{\widetilde \clU_{M}}^{\clU_{M}^\perp} y,\qquad\rho\le1,
\end{align}
are able to stabilize the corresponding linear  systems.
Actually in~\cite{Rod_pp20-CL}, only the cases~$\rho\in\{0,1\}$ are considered.
Since it comes naturally here we consider the general case $\rho\le 1$.
Furthermore,  for feedbacks as in~\eqref{FeedRod-CL}, in~\cite{Rod_pp20-CL} it is shown  that we
can choose the auxiliary spaces~$\widetilde \clU_{M}$ in a more ad-hoc explicit way. In particular, we do not
need to know/compute the eigenfunctions of the diffusion operator.\black

In the present work we prove that  a class of feedbacks as~\eqref{FeedK-intro}, in the form~$ \clK_M \circ P_{\clU_{M}}$,
which includes~\eqref{FeedRod-CL},
are also able to stabilize a class of  nonlinear systems, for which the
Cauchy problem  is well posed for the uncontrolled system in the sense of weak solutions.

\begin{remark}
We do not know
whether  any of these feedbacks in~\eqref{FeedRod-CL} is able to stabilize
also general nonlinear systems for which weak solutions do not necessarily exist.
However, the feedback in~\eqref{FeedRod-eect} is able to stabilize a larger class of nonlinearities
in the case where so called strong solutions exist for initial conditions in a suitable subspace~$V\subset H$,
and where
weak solutions may not exist for initial conditions living in~$H$.
However, the stabilization is achieved in a stronger norm, as
\begin{equation}\label{goal-stV}
 \norm{y(t)}{V}\le C_1\ex^{-\mu(t-s)}\norm{y(s)}{V},\quad\mbox{for all}\quad t\ge s\ge0,\quad\mbox{and all}\quad
 y_0\in V\quad\mbox{with}\quad\norm{y_0}{V}\le R.
\end{equation}
In  case that both strong (with~$y_0\in V$) and weak (with~$y_0\in H$) solutions exist,  it is  in general not clear in  whether one of~\eqref{goal-intro}
and~\eqref{goal-stV} implies the other.
\end{remark}

Our results apply  to internal actuators for parabolic equations.
Analogous explicit {\em semiglobally} stabilizing
feedbacks for  boundary actuators are still an open question, even for linear systems.
We refer, however,  to~\cite{Barbu_TAC13} for a different approach for {\em local} stabilization
of semilinear autonomous systems, with related numerical simulations
presented in~\cite{HalanayMureaSafta13}.
We mention also the backstepping approach~\cite{BalKrstic00,CochranVazquezKrstic06,KrsticMagnVazq09,TsubakinoKrsticHara12}
and the approach in~\cite{AzouaniTiti14,LunasinTiti17} which makes a direct use of the existence of
suitable determining parameters (e.g., Fourier modes, volumes).

For applications it can be of interest to not only achieve stabilization but to do
this while minimizing certain cost criteria. Since optimizing with respect to an operator entails difficulties related to computational complexity we propose to approximate the
feedback operators by neural networks.
Its parameters are trained in
learning steps involving
the cost functional to be minimized. The monotonicity requirement of the feedback law
is incorporated to the cost by means of a penalty term.
 A similar approach for optimal feedback stabilization without focus on the
 projection onto finitely many controllers was recently analyzed in
 \cite{KunischWalter_arx20}.

\subsection{Contents}
In Section~\ref{S:assumptions} the assumptions  required for the
operators~$A$, $A_{\rm rc}$, and~$\clN$ and  for the sequence of actuator
sets~$({U_{M}})_{M\in\bbN_0}$ are presented.  The stabilizing property
of the feedback operator~$\clK_M \circ P_{\clU_{M}}$, with~$\clK_M$ as
in~\eqref{FeedK-intro},  is proven in Section~\ref{S:stab}.
In Sections~\ref{S:learn}
the computation of an optimal feedback~$\clK_M^\bullet$ guaranteeing  that
~$\clK_M^\bullet \circ P_{\clU_{M}}$ is stabilizing and that  the corresponding
solution minimizes a suitable cost is presented.
The applicability of our abstract results to
{\em nonautonomous} parabolic equations is demonstrated  in Section~\ref{S:parabolic-OK}.
In  Section~\ref{S:simul} we present the results of numerical simulations and, finally,
the proofs of certain
technical results are gathered in the Appendix.\black

\subsection{Notation}
We write~$\bbR$ and~$\bbN$ for the sets of real numbers and nonnegative
integers, respectively, and we set $\bbR_r\coloneqq(r,\infty)$, $r\in\bbR$,
and $\bbN_0\coloneqq\mathbb N\setminus\{0\}$.

Given two Banach spaces~$X$ and~$Y$, if the inclusion
$X\subseteq Y$ is continuous, we write $X\xhookrightarrow{} Y$. We write
$X\xhookrightarrow{\rm d} Y$, and $X\xhookrightarrow{\rm c} Y$,
if the inclusion is also dense, respectively compact.

Let $X\subseteq Z$ and~$Y\subseteq Z$ be continuous inclusions, where~$Z$
is a Hausdorff topological space.
Then we can define the Banach spaces $X\times Y$, $X\cap Y$, and $X+Y$,
endowed with the  norms
$|(h,g)|_{X\times Y}:=\bigl(|h|_{X}^2+|g|_{Y}^2\bigr)^{\frac{1}{2}}$,
$|\hat h|_{X\cap Y}:=|(\hat h,\hat h)|_{X\times Y}$, and
$|\tilde h|_{X+Y}:=\inf\limits_{(h,g)\in X\times Y}\bigl\{|(h,g)|_{X\times Y}\mid \tilde h=h+g\bigr\}$,
respectively.
In case we know that $X\cap Y=\{0\}$, then $X+Y$ is a direct sum and we write $X\oplus Y$ instead.

The space of continuous linear mappings from~$X$ into~$Y$ is denoted by~$\clL(X,Y)$. In case~$X=Y$ we
write~$\clL(X)\coloneqq\clL(X,X)$.
The continuous dual of~$X$ is denoted~$X'\coloneqq\clL(X,\bbR)$.
The adjoint of an operator $L\in\clL(X,Y)$ will be denoted $L^*\in\clL(Y',X')$.
The kernel of the operator~$L$ is denoted by~$\Ker L=\Ker_X L\coloneqq\{v\in X\mid Lv=0\}$.

The space of continuous functions from~$X$ into~$Y$ is denoted by~$\clC(X,Y)$.
 The space of continuous real valued
 increasing functions, defined on~$\overline{\bbR_0}$ and vanishing at~$0$ is denoted by:
 \begin{equation*}
  \clC_{0,\iota}(\overline{\bbR_0},\bbR) \coloneqq \{\fki\in \clC(\overline{\bbR_0},\bbR)\mid
  \; \mathfrak i(0)=0,\quad\!\!\mbox{and}\quad\!\!
  \mathfrak i(\varkappa_2)\ge\mathfrak i(\varkappa_1)\;\mbox{ if }\; \varkappa_2\ge \varkappa_1\ge0\}.
 \end{equation*}
We also introduce the vector subspace~$\clC_{\rm b, \iota}(X, Y)\subset \clC(X,Y)$ by
 \begin{equation*}
  \clC_{\rm b, \iota}(X, Y)\coloneqq
  \left\{f\in \clC(X,Y) \mid \exists \, \mathfrak i\in \clC_{0,\iota}(\overline{\bbR_0},\bbR)\;\forall x\in X:\;
 \norm{f(x)}{Y}\le \mathfrak i (\norm{x}{X})
 \right\}.
 \end{equation*}

The orthogonal complement to a given subset~$B\subset H$ of a Hilbert space~$H$,
with scalar product~$(\Bigcdot,\Bigcdot)_H$,  is
denoted~$B^\perp\coloneqq\{h\in H\mid (h,s)_H=0\mbox{ for all }s\in B\}$.
We  write~$\overline B$ for the closure of~$B$ in~$H$.
When there is a need to precise the Hilbert space we will write~$B^{\perp,H}$ and~$\overline B^H$  instead.

Given two closed subspaces~$F\subseteq H$ and~$G\subseteq H$ of a Hilbert space
with~$H=F\oplus G$, we write by~$P_F^G\in\clL(H,F)$
the oblique projection in~$H$ onto~$F$ along~$G$. That is, writing $h\in H$ as $h=h_F+h_G$
with~$(h_F,h_G)\in F\times G$, we have~$P_F^Gh\coloneqq h_F$.
The orthogonal projection in~$H$ onto~$F$ is denoted by~$P_F\in\clL(H,F)$. Notice that~$P_F= P_F^{F^\perp}$.

Given a sequence~$(a_j)_{j\in\{1,2,\dots,n\}}$ of real nonnegative constants, $n\in\bbN_0$, $a_i\ge0$, we
denote~$\|a\|\coloneqq\max\limits_{1\le j\le n} a_j$.
By
$\overline C_{\left[a_1,\dots,a_n\right]}$ we denote a nonnegative function that
increases in each of its nonnegative arguments~$a_i$, $1\le i\le n$.
Finally, $C,\,C_i$, $i=0,\,1,\,\dots$, stand for unessential positive constants.

\section{Assumptions}\label{S:assumptions}
The results rely on  assumptions on the
operators~$A$, $A_{\rm rc}$, and~$\clN$  appearing in the system dynamics, and
on the set~$U_M$ of actuators. We summarize them here.
\begin{assumption}\label{A:A0sp}
 $A\in\clL(V,V')$ is symmetric, and such that $(y,z)\mapsto\langle Ay,z\rangle_{V',V}$ is a
 complete scalar product on~$V.$
\end{assumption}

Hereafter, we suppose that~$V$ is endowed with the scalar product~$(y,z)_V\coloneqq\langle Ay,z\rangle_{V',V}$,
which again makes~$V$ a Hilbert space.
Necessarily, $A\colon V\to V'$ is an isometry.
\begin{assumption}\label{A:A0cdc}
The inclusion $V\subseteq H$ is dense, continuous, and compact.
\end{assumption}

Necessarily, we have that
\[
 \langle y,z\rangle_{V',V}=(y,z)_{H},\quad\mbox{for all }(y,z)\in H\times V,
\]
and also that the operator $A$ is densely defined in~$H$, with domain $\rmD(A)$ satisfying
\[
\rmD(A)\xhookrightarrow{\rm d,\,c} V\xhookrightarrow{\rm d,\,c} H\xhookrightarrow{\rm d,\,c} V'\xhookrightarrow{\rm d,\,c}\rmD(A)'.
\]
Further,~$A$ has a compact inverse~$A^{-1}\colon H\to \rmD(A)$, and we can find a nondecreasing
system of (repeated accordingly to their multiplicity) eigenvalues $(\alpha_n)_{n\in\bbN_0}$ and a corresponding complete basis of
eigenfunctions $(e_n)_{n\in\bbN_0}$:
\begin{equation}\label{eigfeigv}
0<\alpha_1\le\alpha_2\le\dots\le\alpha_n\to+\infty, \quad Ae_n=\alpha_n e_n.
\end{equation}

 We can define, for every $\zeta\in\bbR$, the fractional powers~$A^\zeta$, of $A$, by
 \[
  y=\sum_{n=1}^{+\infty}y_ne_n,\quad A^\zeta y=A^\zeta \sum_{n=1}^{+\infty}y_ne_n\coloneqq\sum_{n=1}^{+\infty}\alpha_n^\zeta y_n e_n,
 \]
 and the corresponding domains~$\rmD(A^{|\zeta|})\coloneqq\{y\in H\mid A^{|\zeta|} y\in H\}$, and
 $\rmD(A^{-|\zeta|})\coloneqq \rmD(A^{|\zeta|})'$.
 We have that~$\rmD(A^{\zeta})\xhookrightarrow{\rm d,\,c}\rmD(A^{\zeta_1})$, for all $\zeta>\zeta_1$,
 and we can see that~$\rmD(A^{0})=H$, $\rmD(A^{1})=\rmD(A)$, $\rmD(A^{\frac{1}{2}})=V$.

\begin{assumption}\label{A:A1}
For almost every~$t>0$ we have~$A_{\rm rc}(t)\in\clL(H, V')+\clL(V, H)$,
and we have a uniform bound, that is, $\norm{A_{\rm rc}}{L^\infty(\bbR_0,\clL(H,V')+\clL(V, H))}\eqqcolon C_{\rm rc}<+\infty.$
\end{assumption}

\begin{assumption}\label{A:NN}
 For almost every~$t>0$, we have~$\clN(t,\Bigcdot)\in\clC_{\rm b,\iota}(V,V')$
and
there exist constants $C_\clN\ge 0$, $n\in\bbN_0$,
$\zeta_{1j}\ge0$, $\zeta_{2j}\ge0$,
 $\delta_{1j}\ge 0$, ~$\delta_{2j}\ge 0$,
 with~$j\in\{1,2,\dots,n\}$, such that
 for  all~$t>0$ and
 all~$(y_1,y_2)\in V\times V$, we have
\begin{align*}
&\norm{\clN(t,y_1)-\clN(t,y_2)}{V'}\le C_\clN\textstyle\sum\limits_{j=1}^{n}
  \left( \norm{y_1}{H}^{\zeta_{1j}}\norm{y_1}{V}^{\zeta_{2j}}+\norm{y_2}{H}^{\zeta_{1j}}\norm{y_2}{V}^{\zeta_{2j}}\right)
   \norm{y_1-y_2}{H}^{\delta_{1j}}\norm{y_1-y_2}{V}^{\delta_{2j}},
\end{align*}
with~$\zeta_{2j}+\delta_{2j}<1$ and~$\delta_{1j}+\delta_{2j}\ge1$.
Further,
$\clN(\Bigcdot,y)\in L^1_{\rm loc}(\bbR_0,V')$, for every $y\in V$.
\end{assumption}

\begin{assumption}\label{A:poincare}
The set~${U_{M}}\coloneqq\{\Phi_j\mid 1\le j\le M_\sigma\}\subset V$ of actuators satisfy the following.
We have~$M_\sigma\coloneqq\sigma(M)$, where~$\sigma\colon\bbN_0\to\bbN_0$
is a strictly increasing function~$\sigma\colon\bbN_0\to\bbN_0$.
Further, with~$ \clU_{M}\coloneqq\linspan{U_{M}}$ and defining for each~$M\in\bbN_0$,
the  Poincar\'e-like constant
\begin{align}\label{Poinc_const}
\xi_{M_+}\coloneqq\inf_{\varTheta\in (V\bigcap\clU_{M}^\perp)\setminus\{0\}}
\tfrac{\norm{\varTheta}{V}^2}{\norm{\varTheta}{H}^2},
\end{align}
we have that
$
\lim\limits_{M\to+\infty}\xi_{M_+}=+\infty.
$
\end{assumption}

\begin{assumption}\label{A:monotone}
The operator~$(t,y)\mapsto \widehat\clN(t,y)\coloneqq  \clK_M(t,P_{\clU_M}y)\in \clU_M$ satisfies Assumption~\ref{A:NN} and,
 for a constant~$\overline\lambda>0$, the nonlinear  operator~$(t,p)\mapsto\clK_M(t,p)$, $p\in\clU_M$, satisfies the monotonicity condition
~$\clK_M(t,\Bigcdot)\preceq-\overline\lambda\Id$, for all $t\ge0$:
\end{assumption}

\begin{remark}
We shall show that Assumptions~\ref{A:A0sp}--\ref{A:monotone} are satisfiable for parabolic
systems as in Section~\ref{sS:illust-parabolic}, evolving
in rectangular domains~$\Omega\subset\bbR^d$.  An analogous argument can be used to show
 the assumptions are satisfiable for
parabolic systems evolving in general convex polygonal spatial domains; see the discussion
in~\cite[Sect.~8.2]{Rod21-sicon}.
\end{remark}

\section{The main stabilizability result}\label{S:stab}
Our main stabilizability result is as follows, which is a more precise statement of Main Result,
in the Introduction.
\begin{theorem}\label{T:main}
 Let Assumptions~\ref{A:A0sp}--\ref{A:monotone} hold true, and let ~$R>0$ and~$\mu>0$ be given. Then if~$M\in\bbN_0$ and~$\overline\lambda>0$  are large enough,
the solution of system~\eqref{sys-y-intro-K}
 satisfies
 \[
  \norm{y(t)}{H}\le\ex^{-\mu (t-s)}\norm{y(s)}{H},\quad\mbox{for all}\quad t\ge s\ge0\quad\mbox{and all}\quad y_0\in H\quad\mbox{with}\quad \norm{y_0}{H}<R.
 \]
In particular, $t\mapsto \norm{y(t)}{H}$ is strictly decreasing at time~$t=s$, if~$\norm{y(s)}{H}\ne0$.
Furthermore,~$M$ and~$\overline\lambda$ can be chosen as~$M=\ovlineC{R,\mu,C_{\rm rc},C_\clN}$
and~$\overline\lambda=\ovlineC{R,\mu,C_{\rm rc},,C_\clN}$.
\end{theorem}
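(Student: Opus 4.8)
The plan is to derive a differential inequality for $\norm{y(t)}{H}^2$ and close it into $\tfrac{\rmd}{\rmd t}\norm{y}{H}^2\le-2\mu\norm{y}{H}^2$, from which~\eqref{goal-intro} follows by the Gr\"onwall inequality after taking square roots. Testing~\eqref{sys-y-intro-K} with $y(t)\in V$ and using $\langle Ay,y\rangle_{V',V}=\norm{y}{V}^2$ from Assumption~\ref{A:A0sp}, I would first obtain, for a.e.\ $t$,
\begin{equation*}
\tfrac12\tfrac{\rmd}{\rmd t}\norm{y}{H}^2
=-\norm{y}{V}^2-\langle A_{\rm rc}y,y\rangle_{V',V}-\langle\clN(y),y\rangle_{V',V}
+(\clK_M(t,P_{\clU_M}y),y)_H .
\end{equation*}
Since $\clK_M(t,\Bigcdot)$ takes values in $\clU_M$, the feedback pairing reduces to $(\clK_M(t,P_{\clU_M}y),P_{\clU_M}y)_H$, whence Assumption~\ref{A:monotone} supplies the key dissipative contribution $-\overline\lambda\norm{P_{\clU_M}y}{H}^2$.

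Next I would absorb the indefinite terms into the diffusion. By Assumption~\ref{A:A1}, $\lvert\langle A_{\rm rc}y,y\rangle_{V',V}\rvert\le C_{\rm rc}\norm{y}{V}\norm{y}{H}$, handled by Young's inequality. For the nonlinearity, taking $y_2=0$ in Assumption~\ref{A:NN} (and noting $\clN(t,0)=0$, since $\clN(t,\Bigcdot)\in\clC_{\rm b,\iota}(V,V')$ forces $\norm{\clN(t,0)}{V'}\le\fki(0)=0$) yields $\norm{\clN(t,y)}{V'}\le C_\clN\sum_j\norm{y}{H}^{\zeta_{1j}+\delta_{1j}}\norm{y}{V}^{\zeta_{2j}+\delta_{2j}}$, so that $\lvert\langle\clN(y),y\rangle_{V',V}\rvert\le C_\clN\sum_j\norm{y}{H}^{\zeta_{1j}+\delta_{1j}}\norm{y}{V}^{\zeta_{2j}+\delta_{2j}+1}$. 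Because $\zeta_{2j}+\delta_{2j}<1$ the $V$-exponent is strictly below $2$, and Young's inequality bounds each summand by $\e\norm{y}{V}^2+C_\e\norm{y}{H}^{\gamma_j}$ with $\gamma_j=\tfrac{2(\zeta_{1j}+\delta_{1j})}{1-(\zeta_{2j}+\delta_{2j})}$; the structural hypothesis $\delta_{1j}+\delta_{2j}\ge1$ is precisely what forces $\gamma_j\ge2$, so that on the set where $\norm{y}{H}\le R$ one has $\norm{y}{H}^{\gamma_j}\le R^{\gamma_j-2}\norm{y}{H}^2$. Choosing $\e$ small enough to retain half of the diffusion, this produces
\begin{equation*}
\tfrac12\tfrac{\rmd}{\rmd t}\norm{y}{H}^2
\le-\tfrac12\norm{y}{V}^2+C_1\norm{y}{H}^2-\overline\lambda\norm{P_{\clU_M}y}{H}^2,
\qquad C_1=\ovlineC{R,C_{\rm rc},C_\clN}.
\end{equation*}

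The decay is then extracted through the Poincar\'e-type inequality of Assumption~\ref{A:poincare}. Writing $q\coloneqq y-P_{\clU_M}y\in V\cap\clU_M^\perp$ I would split $\norm{y}{H}^2=\norm{P_{\clU_M}y}{H}^2+\norm{q}{H}^2$ and bound $\norm{q}{H}^2\le\xi_{M_+}^{-1}\norm{q}{V}^2\le2\xi_{M_+}^{-1}\bigl(\norm{y}{V}^2+\norm{P_{\clU_M}y}{V}^2\bigr)$, using on the finite-dimensional space $\clU_M$ the equivalence $\norm{P_{\clU_M}y}{V}^2\le C_M\norm{P_{\clU_M}y}{H}^2$. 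First I would fix $M$ so large that $\xi_{M_+}\ge8(C_1+\mu)$, which is possible since $\xi_{M_+}\to+\infty$, giving $M=\ovlineC{R,\mu,C_{\rm rc},C_\clN}$ and rendering $C_M$ a fixed constant; then I would choose $\overline\lambda\ge C_1+\mu+\tfrac14 C_M=\ovlineC{R,\mu,C_{\rm rc},C_\clN}$. Substituting these bounds shows $(C_1+\mu)\norm{y}{H}^2\le\tfrac12\norm{y}{V}^2+\overline\lambda\norm{P_{\clU_M}y}{H}^2$, and therefore $\tfrac12\tfrac{\rmd}{\rmd t}\norm{y}{H}^2\le-\mu\norm{y}{H}^2$, as desired.

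Finally, to make the argument rigorous and genuinely \emph{semiglobal}, I would carry out the estimate on Galerkin approximations, where the energy identity is legitimate, and pass to the limit; then I would run a continuation argument. Local weak solvability holds on some $[0,\tau)$, and on the maximal subinterval where $\norm{y(t)}{H}\le R$ the inequality just derived gives $\norm{y(t)}{H}\le\ex^{-\mu t}\norm{y_0}{H}<R$, so this bound is never saturated and the resulting $L^\infty(H)\cap L^2(V)$ a priori control precludes blow-up, forcing $\tau=+\infty$. The strict monotonicity at $t=s$ follows since $\tfrac{\rmd}{\rmd t}\norm{y}{H}^2\le-2\mu\norm{y}{H}^2<0$ whenever $\norm{y(s)}{H}\ne0$. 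I expect the principal difficulty to be exactly this bootstrap: because $C_1$ depends on the a priori bound $R$, the thresholds for $M$ and $\overline\lambda$ must be fixed from $R$ \emph{before} the decay is available, and the energy identity must be transferred from the approximations to the limiting weak solution with care.
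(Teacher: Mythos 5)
Your proof is correct, and it reaches the conclusion of Theorem~\ref{T:main} by a leaner route than the paper's. The skeleton is the same as in Section~\ref{sS:proofT:main}: test with~$y$, absorb the indefinite terms into half the diffusion, use the monotonicity of~$\clK_M$ together with the Poincar\'e-type inequality of Assumption~\ref{A:poincare} to generate the margin~$-\mu\norm{y}{H}^2$, and close with a barrier/continuation argument (the paper invokes~\cite[Prop.~4.3]{Rod20}; your maximal-interval argument is the same device). The difference is in the decomposition: the paper fixes an auxiliary space~$\widetilde\clU_M$ satisfying~\eqref{eqA:dir+} and works with oblique projections --- Lemma~\ref{L:monot-obli} converts the monotonicity into dissipation of~$\norm{P_{\widetilde\clU_M}^{\clU_M^\perp}y}{H}^2$ and Lemma~\ref{L:MlamPoinc} splits~$y$ along~$\widetilde\clU_M\oplus\clU_M^\perp$ --- whereas you keep the dissipation~$-\overline\lambda\norm{P_{\clU_M}y}{H}^2$ as is and split orthogonally, $y=P_{\clU_M}y+q$ with~$q\in V\cap\clU_M^\perp$, which is legitimate because Assumption~\ref{A:poincare} places~$U_M\subset V$, so~$P_{\clU_M}y\in V$ and your finite-dimensional equivalence constant~$C_M$ exists. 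Your argument is in effect the paper's proof specialized to~$\widetilde\clU_M=\clU_M$ (then~$\Xi_M=1$ and Lemma~\ref{L:monot-obli} is trivial), and nothing is lost for the theorem itself, since the oblique machinery is only needed for Corollary~\ref{C:main-expli-1}, where it enters through the verification of Assumption~\ref{A:monotone} rather than through the stability estimate; what the paper's extra generality buys is a single set of lemmas serving both the theorem and the explicit feedbacks~$\clK_M^{\lambda,\rho}$. Likewise, your inline Young-inequality treatment of the nonlinearity (with~$y_2=0$, $\clN(t,0)=0$ from the~$\clC_{\rm b,\iota}$ membership, and~$\gamma_j\ge2$ precisely because~$\delta_{1j}+\delta_{2j}\ge1$ while~$\zeta_{2j}+\delta_{2j}<1$) reproduces what the paper packages as Lemma~\ref{L:NN2}, and your early restriction to the set~$\norm{y}{H}\le R$ is equivalent to the paper's retaining~$\clD_2\norm{y}{H}^p$ up to~\eqref{dtny.V}. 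Two points merit a touch more care to fully match the statement: since your threshold for~$\overline\lambda$ depends on~$C_M$, to obtain the monotone dependence~$\overline\lambda=\ovlineC{R,\mu,C_{\rm rc},C_\clN}$ you should replace~$C_M$ by~$\max_{1\le m\le M}C_m$, which is exactly the device of Remark~\ref{R:depMlam1}; and the transfer of the energy inequality from Galerkin approximations to the limiting weak solution, which you rightly flag as the delicate step, is carried out in the paper in Section~\ref{sS:exist-solut} and uses the part of Assumption~\ref{A:monotone} requiring that~$\clK_M\circ P_{\clU_M}$ itself satisfies Assumption~\ref{A:NN}.
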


This section is mainly dedicated to the proof of this result.

\subsection{Auxiliary results}
We will use the following estimate for the reaction-convection and nonlinear operators.

\begin{lemma}\label{L:Arc-estim}
With~$A_{\rm rc}\in L^\infty(\bbR_0,\clL(H,V')+\clL(V, H))$  and~$C_{\rm rc}$ as in Assumption~\ref{A:A1}, for every~$(h,g)\in V\times V$, we have the estimate
\begin{align}\label{est.Arc}
 2\norm{\langle A_{\rm rc}(t)h,g\rangle_{V',V}}{\bbR}\le \gamma(\norm{g}{V}^2+\norm{h}{V}^2)
 +\gamma^{-1}C_{\rm rc}^2(\norm{h}{H}^2+\norm{g}{H}^2).
\end{align}
\end{lemma}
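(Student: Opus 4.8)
The plan is to exploit the sum-space structure in Assumption~\ref{A:A1}, namely that $A_{\rm rc}(t)\in\clL(H,V')+\clL(V,H)$, and to estimate each component through its natural duality pairing. First I would fix $t$ outside a null set and, given $\e>0$, use the definition of the infimum norm on $\clL(H,V')+\clL(V,H)$ to pick a decomposition $A_{\rm rc}(t)=B_1+B_2$ with $B_1\in\clL(H,V')$, $B_2\in\clL(V,H)$, and $\bigl(\norm{B_1}{\clL(H,V')}^2+\norm{B_2}{\clL(V,H)}^2\bigr)^{1/2}\le C_{\rm rc}+\e$. Setting $a_1\coloneqq\norm{B_1}{\clL(H,V')}$ and $a_2\coloneqq\norm{B_2}{\clL(V,H)}$, nonnegativity of both terms yields the separate bounds $a_1^2\le(C_{\rm rc}+\e)^2$ and $a_2^2\le(C_{\rm rc}+\e)^2$.

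Next I would split the pairing as $\langle A_{\rm rc}(t)h,g\rangle_{V',V}=\langle B_1h,g\rangle_{V',V}+\langle B_2h,g\rangle_{V',V}$ and bound each term. For the first, $B_1h\in V'$, so $\norm{\langle B_1h,g\rangle_{V',V}}{\bbR}\le\norm{B_1h}{V'}\norm{g}{V}\le a_1\norm{h}{H}\norm{g}{V}$. For the second, $B_2h\in H$, and using the identity $\langle y,z\rangle_{V',V}=(y,z)_H$ valid for $(y,z)\in H\times V$, recorded just after Assumption~\ref{A:A0cdc}, I obtain $\langle B_2h,g\rangle_{V',V}=(B_2h,g)_H$, whence $\norm{\langle B_2h,g\rangle_{V',V}}{\bbR}\le\norm{B_2h}{H}\norm{g}{H}\le a_2\norm{h}{V}\norm{g}{H}$. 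The essential structural point is that the two summands are paired differently: the $\clL(H,V')$-part keeps $h$ in the weaker norm and $g$ in the stronger one, while the $\clL(V,H)$-part does the reverse.

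The final step is the weighted Young inequality $2\alpha\beta\le\gamma\alpha^2+\gamma^{-1}\beta^2$, applied so that the $V$-norm factors carry the weight $\gamma$ and the $H$-norm factors carry $\gamma^{-1}$; that is,
\begin{align*}
2a_1\norm{h}{H}\norm{g}{V}\le\gamma\norm{g}{V}^2+\gamma^{-1}a_1^2\norm{h}{H}^2,
\qquad
2a_2\norm{h}{V}\norm{g}{H}\le\gamma\norm{h}{V}^2+\gamma^{-1}a_2^2\norm{g}{H}^2.
\end{align*}
Summing these two bounds, replacing $a_1^2$ and $a_2^2$ by $(C_{\rm rc}+\e)^2$, and finally letting $\e\to0$ produces the asserted estimate~\eqref{est.Arc}. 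There is no serious obstacle here; the only points requiring care are that the infimum defining the sum-space norm need not be attained, which is precisely what the $\e$-argument handles, and the correct matching of the Young weights so that the $V$-terms collect the coefficient $\gamma$ while the $H$-terms collect $\gamma^{-1}C_{\rm rc}^2$.
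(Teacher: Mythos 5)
Your proof is correct and follows essentially the same route as the paper, which defers this lemma to~\cite[Sect.~3.1]{Rod21-sicon}: there too the argument decomposes $A_{\rm rc}(t)=B_1+B_2$ with $B_1\in\clL(H,V')$, $B_2\in\clL(V,H)$, pairs each part in its natural duality so that $h$ and $g$ exchange the roles of the $H$- and $V$-norms, and closes with the weighted Young inequality $2\alpha\beta\le\gamma\alpha^2+\gamma^{-1}\beta^2$. Your $\e$-handling of the possibly non-attained infimum in the sum-space norm is a careful and correct touch, fully consistent with the paper's definition $\norm{\tilde h}{X+Y}=\inf\bigl\{\bigl(\norm{h}{X}^2+\norm{g}{Y}^2\bigr)^{1/2}\mid\tilde h=h+g\bigr\}$.
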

The proof of Lemma~\ref{L:Arc-estim} can be found in~\cite[Sect.~3.1]{Rod21-sicon}.

\begin{lemma}\label{L:NN1}
 If Assumptions~\ref{A:A0sp}, \ref{A:A0cdc}, and~\ref{A:NN} hold true, then
there is a constant $\overline C_{\clN1}>0$
such that:
  for all~$\widehat\gamma_0>0$, all~$t>0$,
  all~$(y_1,y_2)\in V\times V$,  we have
\begin{align}
 &2\bigl\langle \clN(t,y_1)-\clN(t,y_2),y_1-y_2\bigr\rangle_{V',V}
 \le \widehat\gamma_0 \norm{y_1- y_2}{V}^{2}\label{NNyAy}\\
  &\hspace*{3em}
  +\left(1+\widehat\gamma_0^{-\frac{1+\|\delta_2\|}{1-\|\delta_2\|} }\right)
 \overline C_{\clN1}\sum\limits_{j=1}^n \left( \norm{y_1}{H}^\frac{2\zeta_{1j}}{1-\delta_{2j}} \norm{y_1}{V}^\frac{2\zeta_{2j}}{1-\delta_{2j}}
 +\norm{y_2}{H}^\frac{2\zeta_{1j}}{1-\delta_{2j}} \norm{y_2}{V}^\frac{2\zeta_{2j}}{1-\delta_{2j}}
 \right)\norm{y_1-y_2}{H}^{\frac{2\delta_{1j}}{1-\delta_{2j}}},\notag
\end{align}
 with~$\|\delta_2\|\coloneqq\max\limits_{1\le j\le n}\delta_{2j}$. Further, the constant~$\overline C_{\clN1}$ is of the form
 $ \overline C_{\clN1}=\ovlineC{n,\frac{1}{1-\|\delta_{2}\|},C_\clN}$.
 \end{lemma}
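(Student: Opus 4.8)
The plan is to reduce everything to the pointwise growth bound of Assumption~\ref{A:NN} and then apply Young's inequality termwise in the sum over $j$. First I would bound the pairing by the norm of the functional: since $y_1-y_2\in V$,
\[
2\langle\clN(t,y_1)-\clN(t,y_2),y_1-y_2\rangle_{V',V}\le 2\norm{\clN(t,y_1)-\clN(t,y_2)}{V'}\norm{y_1-y_2}{V}.
\]
Inserting the estimate of Assumption~\ref{A:NN} and merging the extra factor $\norm{y_1-y_2}{V}$ with the existing $\norm{y_1-y_2}{V}^{\delta_{2j}}$, the right-hand side becomes
\[
2C_\clN\sum_{j=1}^n\Bigl(\norm{y_1}{H}^{\zeta_{1j}}\norm{y_1}{V}^{\zeta_{2j}}+\norm{y_2}{H}^{\zeta_{1j}}\norm{y_2}{V}^{\zeta_{2j}}\Bigr)\norm{y_1-y_2}{H}^{\delta_{1j}}\norm{y_1-y_2}{V}^{1+\delta_{2j}}.
\]
Since $\zeta_{2j}+\delta_{2j}<1$ forces $\delta_{2j}<1$, the exponent $1+\delta_{2j}$ lies strictly between $1$ and $2$, which is exactly what is needed to split off a full power $\norm{y_1-y_2}{V}^2$.

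Then I would apply Young's inequality to each summand with the conjugate exponents $p_j=\tfrac{2}{1+\delta_{2j}}$ and $q_j=\tfrac{2}{1-\delta_{2j}}$ (indeed $\tfrac{1}{p_j}+\tfrac{1}{q_j}=1$ and $p_j(1+\delta_{2j})=2$), writing the $j$-th term as $a_j\cdot\norm{y_1-y_2}{V}^{1+\delta_{2j}}$ with $a_j$ collecting the remaining factors. Choosing the Young parameter $\e_j$ by $\e_j^{p_j}=p_j\widehat\gamma_0/n$ makes the $\norm{y_1-y_2}{V}^{2}$ contribution carry coefficient $\widehat\gamma_0/n$, so that after summation these yield the global term $\widehat\gamma_0\norm{y_1-y_2}{V}^2$. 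The complementary term is a constant multiple of $a_j^{q_j}$; since $q_j\ge2$ one has $q_j\zeta_{1j}=\tfrac{2\zeta_{1j}}{1-\delta_{2j}}$, $q_j\zeta_{2j}=\tfrac{2\zeta_{2j}}{1-\delta_{2j}}$, $q_j\delta_{1j}=\tfrac{2\delta_{1j}}{1-\delta_{2j}}$, and expanding $(u+v)^{q_j}\le 2^{q_j-1}(u^{q_j}+v^{q_j})$ (convexity, valid as $q_j\ge1$) produces precisely the two summands of the right-hand side of~\eqref{NNyAy}.

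The one bookkeeping point, which I expect to be the only mild obstacle, is the $\widehat\gamma_0$-dependence. The choice of $\e_j$ leaves the $j$-th complementary term multiplied by $\e_j^{-q_j}=(n/p_j)^{\frac{1+\delta_{2j}}{1-\delta_{2j}}}\widehat\gamma_0^{-\frac{1+\delta_{2j}}{1-\delta_{2j}}}$, whereas the statement asks for the uniform factor $1+\widehat\gamma_0^{-\frac{1+\|\delta_2\|}{1-\|\delta_2\|}}$. Since $s\mapsto\tfrac{1+s}{1-s}$ is increasing on $[0,1)$ and $\delta_{2j}\le\|\delta_2\|$, I would dominate $\widehat\gamma_0^{-\frac{1+\delta_{2j}}{1-\delta_{2j}}}\le 1+\widehat\gamma_0^{-\frac{1+\|\delta_2\|}{1-\|\delta_2\|}}$ by splitting into the cases $\widehat\gamma_0\ge1$ (left side $\le1$) and $0<\widehat\gamma_0<1$ (where raising $\widehat\gamma_0<1$ to a larger negative exponent only increases the value, so it is bounded by the max-exponent term).

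Finally I would absorb all numerical constants—$(2C_\clN)^{q_j}$, $2^{q_j-1}$, $q_j^{-1}$, and $(n/p_j)^{\frac{1+\delta_{2j}}{1-\delta_{2j}}}$—into a single $\overline C_{\clN1}$, taken as the maximum over the finitely many indices $j\in\{1,\dots,n\}$, so that the bound holds term by term. Each of these quantities depends only on $n$, $C_\clN$, and $\tfrac{1}{1-\|\delta_2\|}$ (through $q_j\le\tfrac{2}{1-\|\delta_2\|}$), which yields the asserted form $\overline C_{\clN1}=\ovlineC{n,\frac{1}{1-\|\delta_2\|},C_\clN}$. Note that the hypothesis $\delta_{1j}+\delta_{2j}\ge1$ from Assumption~\ref{A:NN} is not needed for this estimate; only $\delta_{2j}<1$ is used, to make the conjugate exponents admissible.
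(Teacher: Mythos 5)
Your proof is correct, but it takes a genuinely different route from the paper's. The paper proves Lemma~\ref{L:NN1} by a fractional-power shift: it sets $\overline\clN\coloneqq A^{-\frac12}\clN A^{\frac12}$ and $z_i\coloneqq A^{-\frac12}y_i$, observes that Assumption~\ref{A:NN} then holds for $\overline\clN$ with the scale shifted one half-power up ($H\rightsquigarrow V$, $V\rightsquigarrow\rmD(A)$), rewrites the duality pairing as $2\bigl(\overline\clN(t,z_1)-\overline\clN(t,z_2),A(z_1-z_2)\bigr)_H$, and invokes the strong-solution estimate of~\cite[Prop.~3.5]{Rod20} in the shifted spaces, translating back at the end. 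You instead argue directly in the weak setting: bound the pairing by $2\norm{\clN(t,y_1)-\clN(t,y_2)}{V'}\norm{y_1-y_2}{V}$, absorb the extra $V$-norm factor into $\norm{y_1-y_2}{V}^{1+\delta_{2j}}$, and apply weighted Young termwise with the conjugate pair $p_j=\tfrac{2}{1+\delta_{2j}}$, $q_j=\tfrac{2}{1-\delta_{2j}}$. Your exponent bookkeeping is right ($q_j\zeta_{1j}$, $q_j\zeta_{2j}$, $q_j\delta_{1j}$ reproduce the exponents in~\eqref{NNyAy}, and $\e_j^{-q_j}\propto\widehat\gamma_0^{-q_j/p_j}$ with $q_j/p_j=\tfrac{1+\delta_{2j}}{1-\delta_{2j}}$), your case split $\widehat\gamma_0\gtrless1$ correctly dominates each $\widehat\gamma_0^{-\frac{1+\delta_{2j}}{1-\delta_{2j}}}$ by $1+\widehat\gamma_0^{-\frac{1+\|\delta_2\|}{1-\|\delta_2\|}}$ (using that $s\mapsto\tfrac{1+s}{1-s}$ increases on $[0,1)$ and $\|\delta_2\|<1$, which follows from $\zeta_{2j}+\delta_{2j}<1$), and the collected constants indeed have the form $\ovlineC{n,\frac{1}{1-\|\delta_2\|},C_\clN}$ since $q_j\le\tfrac{2}{1-\|\delta_2\|}$. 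What each approach buys: yours is self-contained and elementary, avoids the external citation, and correctly isolates that only $\delta_{2j}<1$ matters here --- your closing remark that $\delta_{1j}+\delta_{2j}\ge1$ is not needed for this estimate is accurate (the paper uses that hypothesis elsewhere, e.g.\ in the compactness argument of Section~\ref{sS:exist-solut}); the paper's reduction buys brevity and makes transparent how the weak ($H$-level) estimate is the strong ($V$-level) estimate of~\cite{Rod20} viewed through $A^{\pm\frac12}$, though under the hood that cited result rests on the same Young-type computation you carry out explicitly.
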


\begin{proof}
By Assumption~\ref{A:NN}, with
\[
\overline\clN\coloneqq A^{-\frac12}\clN A^\frac12,\quad (z_1,z_2)\coloneqq(A^{-\frac12}y_1,A^{-\frac12}y_2),\quad\mbox{and}\quad
\overline d\coloneqq A^{-\frac12}d=A^{-\frac12}(y_1-y_2),
\]
it follows that
\begin{align*}
&\norm{\overline\clN(t,z_1)-\overline\clN(t,z_2)}{H}\le C_\clN\textstyle\sum\limits_{j=1}^{n}
  \left( \norm{z_1}{V}^{\zeta_{1j}}\norm{z_1}{\rmD(A)}^{\zeta_{2j}}+\norm{z_2}{V}^{\zeta_{1j}}\norm{z_2}{\rmD(A)}^{\zeta_{2j}}\right)
   \norm{\overline d}{V}^{\delta_{1j}}\norm{\overline d}{\rmD(A)}^{\delta_{2j}}.
\end{align*}
Then, by~\cite[Prop.~3.5]{Rod20} we have that
\begin{align*}
 &2\bigl\langle \clN(t,y_1)-\clN(t,y_2),y_1-y_2\bigr\rangle_{V',V}
=2\bigl( \overline\clN(t,z_1)-\overline\clN(t,z_2),A(z_1-z_2)\bigr)_H
 \le \widehat\gamma_0 \norm{z_1- z_2}{\rmD(A)}^{2}\\
  &\hspace*{3em}
  +\left(1+\widehat\gamma_0^{-\frac{1+\|\delta_2\|}{1-\|\delta_2\|} }\right)
 \overline C_{\clN1}\sum\limits_{j=1}^n \left( \norm{z_1}{V}^\frac{2\zeta_{1j}}{1-\delta_{2j}} \norm{z_1}{\rmD(A)}^\frac{2\zeta_{2j}}{1-\delta_{2j}}
 +\norm{z_2}{V}^\frac{2\zeta_{1j}}{1-\delta_{2j}} \norm{y_2}{\rmD(A)}^\frac{2\zeta_{2j}}{1-\delta_{2j}}
 \right)\norm{z_1-z_2}{V}^{\frac{2\delta_{1j}}{1-\delta_{2j}}},
\end{align*}
which gives us the desired estimate~\eqref{NNyAy}.
\end{proof}

\begin{remark}
Lemma~\ref{L:NN1} is appropriate to deal with weak solutions investigated in this manuscript. It is the analogous of~\cite[Prop.~3.5]{Rod20}, which have been used to deal with strong solutions. Actually, the result presented in~\cite[Prop.~3.5]{Rod20} is an estimate for $2(\clP(\overline\clN(t,z_1)-\overline\clN(t,z_2)),A(z_1-z_2)_{H}$, where~$\clP$ is a suitable projection. However, the steps of the proof can be repeated for an arbitrary given linear operator~$\clP\in\clL(H)$, and in particular for the identity operator~$\clP=\Id$.
\end{remark}

\begin{lemma}\label{L:NN2}
 If Assumptions~\ref{A:A0sp}, \ref{A:A0cdc}, and~\ref{A:NN} hold true, then
there exits a constant~$p>0$
such that:
  for all~$\widehat\gamma_0>0$, all~$t>0$,
  all~$y\in V$,  we have
\begin{align}\label{NNyAy2}
2\bigl\langle \clN(y),y\bigr\rangle_{V',V}
&\le \widehat\gamma_0 \norm{y}{V}^{2}
+\overline C_{\clN2} (1+\norm{y}{H}^{p})\norm{y}{H}^2, \quad\mbox{for all}\quad
y\in V,
\end{align}
with~$\overline C_{\clN2}=\ovlineC{\widehat\gamma_0^{-1},C_\clN}$.
 \end{lemma}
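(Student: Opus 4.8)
The plan is to reduce the statement to the already-established Lemma~\ref{L:NN1} by specializing its estimate to $y_2=0$, and then to absorb the resulting mixed $H$--$V$ norm products by Young's inequality, exploiting the structural exponent conditions in Assumption~\ref{A:NN}. First I would record that $\clN(t,0)=0$: since $\clN(t,\Bigcdot)\in\clC_{\rm b,\iota}(V,V')$, there is an $\fki\in\clC_{0,\iota}(\overline{\bbR_0},\bbR)$ with $\norm{\clN(t,0)}{V'}\le\fki(0)=0$. Hence, applying Lemma~\ref{L:NN1} with $y_1=y$ and $y_2=0$, and taking its free parameter to be $\widehat\gamma_0/2$, I obtain
\[
2\langle\clN(t,y),y\rangle_{V',V}\le\tfrac{\widehat\gamma_0}{2}\norm{y}{V}^2+C_1\sum_{j=1}^n\norm{y}{H}^{r_j}\norm{y}{V}^{q_j},
\]
where $r_j\coloneqq\frac{2(\zeta_{1j}+\delta_{1j})}{1-\delta_{2j}}$, $q_j\coloneqq\frac{2\zeta_{2j}}{1-\delta_{2j}}$, and $C_1=\ovlineC{\widehat\gamma_0^{-1},C_\clN}$ collects the prefactor $(1+(\widehat\gamma_0/2)^{-\frac{1+\|\delta_2\|}{1-\|\delta_2\|}})\overline C_{\clN1}$ coming from Lemma~\ref{L:NN1}.

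The decisive point is that the condition $\zeta_{2j}+\delta_{2j}<1$ forces $q_j<2$ for every $j$. This subcriticality in the $V$-norm is exactly what allows each mixed term to be split by Young's inequality with conjugate exponents $\frac{2}{q_j}$ and $\frac{2}{2-q_j}$ into a small multiple of $\norm{y}{V}^2$ plus a pure power of $\norm{y}{H}$:
\[
C_1\norm{y}{H}^{r_j}\norm{y}{V}^{q_j}\le\tfrac{\widehat\gamma_0}{2n}\norm{y}{V}^2+C_2\norm{y}{H}^{s_j},\qquad s_j\coloneqq\frac{2r_j}{2-q_j}=\frac{2(\zeta_{1j}+\delta_{1j})}{1-\zeta_{2j}-\delta_{2j}},
\]
with $C_2=\ovlineC{\widehat\gamma_0^{-1},C_\clN}$ and denominator $1-\zeta_{2j}-\delta_{2j}>0$ again by subcriticality. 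Summing over $j$ and combining the $V$-norm contributions absorbs them into the single term $\widehat\gamma_0\norm{y}{V}^2$, as required.

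Finally I would use the second structural condition $\delta_{1j}+\delta_{2j}\ge1$ to control the exponents $s_j$. Since $\zeta_{1j}+\delta_{1j}+\zeta_{2j}+\delta_{2j}\ge\delta_{1j}+\delta_{2j}\ge1$, one gets $s_j\ge2$ for all $j$; setting $p\coloneqq\max_{1\le j\le n}(s_j-2)\ge0$ and using $\norm{y}{H}^{\alpha}\le1+\norm{y}{H}^{p}$ for $0\le\alpha\le p$ gives $\norm{y}{H}^{s_j}=\norm{y}{H}^{s_j-2}\norm{y}{H}^2\le(1+\norm{y}{H}^{p})\norm{y}{H}^2$. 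Collecting the $n$ terms then yields the claim with $\overline C_{\clN2}=nC_2=\ovlineC{\widehat\gamma_0^{-1},C_\clN}$. The only mildly delicate bookkeeping occurs when some $\zeta_{1j}=\zeta_{2j}=0$, in which case the corresponding term already appears as a pure power $\norm{y}{H}^{2\delta_{1j}/(1-\delta_{2j})}$ of the $H$-norm (no Young step being needed), and $\frac{2\delta_{1j}}{1-\delta_{2j}}\ge2$ holds by the same condition $\delta_{1j}+\delta_{2j}\ge1$; otherwise the argument is routine.
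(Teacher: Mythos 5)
Your proof is correct and follows essentially the same route as the paper's appendix proof: specialize Lemma~\ref{L:NN1} to $(y_1,y_2)=(y,0)$, split each mixed term by Young's inequality using the subcriticality $\zeta_{2j}+\delta_{2j}<1$, and invoke $\delta_{1j}+\delta_{2j}\ge1$ to conclude that the resulting pure powers $\norm{y}{H}^{s_j}$ satisfy $s_j\ge2$, taking $p$ to be the maximal excess. Two minor points in your favor: you make explicit that $\clN(t,0)=0$ (tacitly needed for the specialization, and indeed a consequence of $\clN(t,\Bigcdot)\in\clC_{\rm b, \iota}(V,V')$), and you work directly in the $(V,H)$ scale of the statement, whereas the paper's appendix performs the identical computation in the shifted $(\rmD(A),V)$ scale inherited from~\cite{Rod20} and contains a sign slip in its definition of $p_j$ (it should read $p_j=\tfrac{2\zeta_{1j}+2\delta_{1j}}{1-\delta_{2j}-\zeta_{2j}}-2\ge0$), which your $p=\max_{1\le j\le n}(s_j-2)$ silently corrects.
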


The proof of Lemma~\ref{L:NN2} follows by direct computations using arguments from~\cite[Proof of Prop.~3.5]{Rod20}.
Since the computations are long, we give them in the Appendix, Section~\ref{Apx:proofL:NN2}.

Below, we will assume that the span~$\clU_M$ of our actuators and the span of auxiliary
functions~$\widetilde\clU_M$
satisfy the relations
\begin{equation}\label{eqA:dir+}
 \clU_M\subset H,\quad\widetilde\clU_M\subset V,\quad\dim\widetilde\clU_M=\dim\clU_M=M_\sigma,
 \quad\mbox{and}\quad H=\widetilde\clU_M\oplus\clU_M^\perp.
\end{equation}

\begin{lemma}\label{L:MlamPoinc}
 Let Assumptions~\ref{A:A0sp}, \ref{A:A0cdc}, and~\ref{A:poincare} hold true. Let the
 pair~$(\clU_M,\widetilde\clU_M)$ satisfy~\eqref{eqA:dir+}
and let~$(\Xi_M)_{M\in\bbN_0}$ be a sequence of positive real numbers. Then for every constant~$\zeta>0$ we can find~$M$ and~$\overline\lambda$ large enough such that
\[
\norm{y}{V}^2+2\overline\lambda \Xi_M\norm{P_{\widetilde\clU_{M}}^{\clU_{M}^\perp}y}{H}^2
\ge\zeta \norm{y}{H}^2, \quad\mbox{for all}\quad
y\in V.
\]
Furthermore~$M=\ovlineC{\zeta}$ and~$\overline\lambda=\ovlineC{\zeta,\xi_{M},\Xi_M^{-1}}$, where
$
\xi_{M}^2\coloneqq\sup\limits_{\vartheta\,\in\, \widetilde\clU_{M}\setminus\{0\}}
\tfrac{\norm{\vartheta}{V}^2}{\norm{\vartheta}{H}^2}.
$
 \end{lemma}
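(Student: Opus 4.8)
The plan is to reduce everything to the Poincar\'e-type estimate on the orthogonal complement~$\clU_M^\perp$ furnished by Assumption~\ref{A:poincare}, exploiting that the piece of~$y$ lying in~$\widetilde\clU_M$ is exactly the quantity controlled by the admissible term~$\norm{P_{\widetilde\clU_M}^{\clU_M^\perp}y}{H}^2$. First I would split~$y\in V$ according to the direct sum~$H=\widetilde\clU_M\oplus\clU_M^\perp$ guaranteed by~\eqref{eqA:dir+}, writing~$y=y_1+y_2$ with~$y_1\coloneqq P_{\widetilde\clU_M}^{\clU_M^\perp}y\in\widetilde\clU_M$ and~$y_2\coloneqq y-y_1\in\clU_M^\perp$. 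Since~$y\in V$ and~$y_1\in\widetilde\clU_M\subset V$, the remainder satisfies~$y_2\in V\cap\clU_M^\perp$, so it is a legitimate argument for the Poincar\'e-like constant~$\xi_{M_+}$.

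Next I would assemble three elementary bounds. The crude triangle inequality gives~$\norm{y}{H}^2\le 2\norm{y_1}{H}^2+2\norm{y_2}{H}^2$; Assumption~\ref{A:poincare} gives~$\norm{y_2}{H}^2\le\xi_{M_+}^{-1}\norm{y_2}{V}^2$; and~$y_2=y-y_1$ together with the definition of~$\xi_M$ on the finite-dimensional space~$\widetilde\clU_M$ (which yields~$\norm{y_1}{V}\le\xi_M\norm{y_1}{H}$) gives~$\norm{y_2}{V}^2\le 2\norm{y}{V}^2+2\xi_M^2\norm{y_1}{H}^2$. Chaining these produces
\begin{align*}
\norm{y}{H}^2\le 4\xi_{M_+}^{-1}\norm{y}{V}^2+\bigl(2+4\xi_{M_+}^{-1}\xi_M^2\bigr)\norm{P_{\widetilde\clU_M}^{\clU_M^\perp}y}{H}^2 .
\end{align*}

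Finally I would tune the two free parameters. Multiplying by~$\zeta$ and comparing with the target inequality, the~$\norm{y}{V}^2$ term is absorbed provided~$4\zeta\,\xi_{M_+}^{-1}\le1$; since~$\xi_{M_+}\to+\infty$ as~$M\to+\infty$, it suffices to take~$M=\ovlineC{\zeta}$ so large that~$\xi_{M_+}\ge4\zeta$. With~$M$ now fixed, the coefficient of~$\norm{P_{\widetilde\clU_M}^{\clU_M^\perp}y}{H}^2$ is a fixed number, and I choose~$\overline\lambda$ large enough that~$2\overline\lambda\Xi_M\ge\zeta\bigl(2+4\xi_{M_+}^{-1}\xi_M^2\bigr)$; using~$\xi_{M_+}^{-1}\le(4\zeta)^{-1}$ this holds as soon as~$\overline\lambda\ge\Xi_M^{-1}\bigl(\zeta+\tfrac12\xi_M^2\bigr)$, which is of the claimed form~$\overline\lambda=\ovlineC{\zeta,\xi_M,\Xi_M^{-1}}$.

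The only genuinely delicate point is the second step: the oblique projection~$P_{\widetilde\clU_M}^{\clU_M^\perp}$ is orthogonal neither in~$H$ nor in~$V$, so~$\norm{y_2}{V}$ cannot be bounded by~$\norm{y}{V}$ alone. What saves the argument is that the ``bad'' component~$y_1$ lives in the finite-dimensional auxiliary space~$\widetilde\clU_M$, on which~$\norm{\Bigcdot}{V}$ and~$\norm{\Bigcdot}{H}$ are equivalent with the explicit constant~$\xi_M$; this is precisely what lets~$\norm{y_1}{V}$ be reabsorbed into the~$\norm{P_{\widetilde\clU_M}^{\clU_M^\perp}y}{H}^2$ term, and is the reason~$\xi_M$ (rather than~$\xi_{M_+}$) governs the size of~$\overline\lambda$.
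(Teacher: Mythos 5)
Your proof is correct. It shares the paper's skeleton --- the same splitting of~$y$ along~$H=\widetilde\clU_M\oplus\clU_M^\perp$ from~\eqref{eqA:dir+}, the same two constants~$\xi_{M_+}$ (Assumption~\ref{A:poincare}) and~$\xi_M$ (norm equivalence on the finite-dimensional~$\widetilde\clU_M$), and even the same final requirement~$\xi_{M_+}\ge 4\zeta$ --- but your inequality chain runs in the opposite direction and is more elementary. The paper bounds the left-hand side from below: it expands~$\norm{\varTheta+\vartheta}{V}^2$ using the $V$-scalar product of Assumption~\ref{A:A0sp}, absorbs the cross term~$2(\varTheta,\vartheta)_V$ by Young's inequality to reach~$\tfrac12\norm{\varTheta}{V}^2-\norm{\vartheta}{V}^2$, trades~$\norm{\vartheta}{H}^2\ge\xi_M^{-2}\norm{\vartheta}{V}^2$, and then must convert $V$-norms back to $H$-norms via~$\xi_{M_+}$ and the embedding constant~$\norm{\Id}{\clL(V,H)}$, ending with the threshold~$\overline\lambda\ge\bigl(2\zeta\norm{\Id}{\clL(V,H)}^{2}+1\bigr)\tfrac{\xi_M^{2}}{2\Xi_M}$. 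You instead bound~$\zeta\norm{y}{H}^2$ from above using only triangle inequalities, which dispenses with the inner-product expansion and --- the main gain --- with the embedding constant altogether: your threshold~$\overline\lambda\ge\Xi_M^{-1}\bigl(\zeta+\tfrac12\xi_M^2\bigr)$ is marginally more explicit and still of the asserted form~$\ovlineC{\zeta,\xi_M,\Xi_M^{-1}}$. Your closing diagnosis is also exactly right and mirrors the paper's mechanism: since~$P_{\widetilde\clU_M}^{\clU_M^\perp}$ is orthogonal neither in~$H$ nor in~$V$, the $V$-norm of the~$\widetilde\clU_M$-component must be re-expressed through~$\xi_M$, which is precisely why~$\xi_M$ (and not~$\xi_{M_+}$) governs the size of~$\overline\lambda$ in both arguments.
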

\begin{proof}
Since~$(\clU_M,\widetilde\clU_M)$ satisfies~\eqref{eqA:dir+}, we can write
\[
y=\varTheta+\vartheta,\quad\mbox{with}\quad
\varTheta\coloneqq P_{\clU_{M}^\perp}^{\widetilde\clU_{M}}y\quad\mbox{and}\quad
\vartheta\coloneqq P_{\widetilde\clU_{M}}^{\clU_{M}^\perp}y.
\]
We find that
\begin{align*}
\norm{y}{V}^2+2\overline\lambda\Xi_M \norm{P_{\widetilde\clU_{M}}^{\clU_{M}^\perp}y}{H}^2
&=\norm{\varTheta+\vartheta}{V}^2+2\overline\lambda\Xi_M \norm{\vartheta}{H}^2
=\norm{\varTheta}{V}^2+2(\varTheta,\vartheta)_V+\norm{\vartheta}{V}^2+2\overline\lambda\Xi_M \norm{\vartheta}{H}^2\\
&\ge\tfrac{1}{2}\norm{\varTheta}{V}^2-\norm{\vartheta}{V}^2+2\overline\lambda \Xi_M\norm{\vartheta}{H}^2
\ge\tfrac{1}{2}\norm{\varTheta}{V}^2+(2\overline\lambda\Xi_M\xi_{M}^{-2}-1)\norm{\vartheta}{V}^2
\end{align*}
and by choosing~$\overline\lambda>\frac{\xi_{M}^{2}}{2\Xi_M}$ it follows that
\begin{align*}
\norm{y}{V}^2+2\overline\lambda\Xi_M \norm{P_{\widetilde\clU_{M}}^{\clU_{M}^\perp}y}{H}^2
&\ge\tfrac{1}{2}\xi_{M_+}\norm{\varTheta}{H}^2
+\norm{\Id}{\clL(V,H)}^{-2}(2\overline\lambda\Xi_M\xi_{M}^{-2}-1)\norm{\vartheta}{H}^2.
\end{align*}
Hence, for given~$\zeta>0$, by choosing
\[
\xi_{M_+}\ge 4\zeta\quad\mbox{and}\quad\overline\lambda\ge(2\zeta\norm{\Id}{\clL(V,H)}^{2}+1)\tfrac{\xi_{M}^{2}}{2\Xi_M}>\tfrac{\xi_{M}^{2}}{2\Xi_M},
\]
we arrive at
\begin{align*}
\norm{y}{V}^2+2\overline\lambda\Xi_M \norm{P_{\widetilde\clU_{M}}^{\clU_{M}^\perp}y}{H}^2
&\ge2\zeta\left(\norm{\varTheta}{H}^2+ \norm{\vartheta}{H}^2\right)
\ge \zeta\norm{\varTheta+\vartheta}{H}^2=\zeta\norm{y}{H}^2,
\end{align*}
which ends the proof.
\end{proof}

\begin{lemma}\label{L:monot-obli}
Let Assumption~\ref{A:monotone} hold true for~$\overline\lambda>0$,  and let the
 pair~$(\clU_M,\widetilde\clU_M)$ satisfy~\eqref{eqA:dir+}.
Then, we have that
$
(\clK_M(t,p),p)_H\le-\overline\lambda\norm{P_{\widetilde \clU_M}^{\clU_M^\perp}}{\clL(H)}^{-2}\norm{P_{\widetilde \clU_M}^{\clU_M^\perp}p}{H}^2$, for all $p\in\clU_M$.
\end{lemma}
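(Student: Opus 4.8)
The plan is to obtain the claimed bound directly from the monotonicity hypothesis by inserting a single operator-norm estimate for the oblique projection. First I would unwind Assumption~\ref{A:monotone}: by Definition~\ref{D:monot}, the relation~$\clK_M(t,\Bigcdot)\preceq-\overline\lambda\Id$ means precisely that
\begin{equation*}
(\clK_M(t,p),p)_H\le-\overline\lambda\norm{p}{H}^2,\qquad\mbox{for all }p\in\clU_M\mbox{ and almost every }t>0.
\end{equation*}
The entire task then reduces to bounding~$\norm{p}{H}^2$ from below by~$\norm{P_{\widetilde \clU_M}^{\clU_M^\perp}}{\clL(H)}^{-2}\norm{P_{\widetilde \clU_M}^{\clU_M^\perp}p}{H}^2$.

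The key (and essentially only) ingredient is that the oblique projection~$P_{\widetilde \clU_M}^{\clU_M^\perp}$ is a bounded operator on~$H$. This is guaranteed by~\eqref{eqA:dir+}, which asserts that~$H=\widetilde\clU_M\oplus\clU_M^\perp$ is a topological direct sum; the projection associated with such a decomposition lies in~$\clL(H)$, and since~$\widetilde\clU_M\ne\{0\}$ it is a nonzero idempotent, so that~$1\le\norm{P_{\widetilde \clU_M}^{\clU_M^\perp}}{\clL(H)}<+\infty$ and its reciprocal is well defined. From boundedness I get~$\norm{P_{\widetilde \clU_M}^{\clU_M^\perp}p}{H}\le\norm{P_{\widetilde \clU_M}^{\clU_M^\perp}}{\clL(H)}\norm{p}{H}$, hence, after squaring and rearranging,
\begin{equation*}
\norm{P_{\widetilde \clU_M}^{\clU_M^\perp}}{\clL(H)}^{-2}\norm{P_{\widetilde \clU_M}^{\clU_M^\perp}p}{H}^2\le\norm{p}{H}^2,\qquad\mbox{for all }p\in\clU_M.
\end{equation*}

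Finally I would combine the two displays. Multiplying the last inequality by~$-\overline\lambda<0$ reverses its direction, giving~$-\overline\lambda\norm{p}{H}^2\le-\overline\lambda\norm{P_{\widetilde \clU_M}^{\clU_M^\perp}}{\clL(H)}^{-2}\norm{P_{\widetilde \clU_M}^{\clU_M^\perp}p}{H}^2$; chaining this with the monotonicity estimate yields the asserted bound for every~$p\in\clU_M$. I do not expect any genuine obstacle here, since the argument is a one-line operator-norm estimate followed by a sign flip. The only point deserving a word of care is the well-posedness of the reciprocal operator norm, that is, checking that the oblique projection is bounded and nonzero, which is exactly what the direct-sum structure in~\eqref{eqA:dir+} provides.
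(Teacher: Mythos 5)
Your proof is correct and takes essentially the same route as the paper: both combine the monotonicity hypothesis~$(\clK_M(t,p),p)_H\le-\overline\lambda\norm{p}{H}^2$ with the operator-norm estimate~$\norm{P_{\widetilde \clU_M}^{\clU_M^\perp}p}{H}\le\norm{P_{\widetilde \clU_M}^{\clU_M^\perp}}{\clL(H)}\norm{p}{H}$ and a sign flip. The paper's only (cosmetic) difference is that it derives this estimate via the identities~$P_{\widetilde \clU_M}^{\clU_M^\perp}=P_{\widetilde \clU_M}^{\clU_M^\perp}P_{\clU_M}$ and~$P_{\clU_M}=P_{\clU_M}P_{\widetilde \clU_M}^{\clU_M^\perp}$, which for~$p\in\clU_M$ collapses to exactly your direct bound.
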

\begin{proof}
Let~$p\in\clU_M$ be arbitrary. Using the relations
\[
P_{\widetilde \clU_M}^{\clU_M^\perp}=P_{\widetilde \clU_M}^{\clU_M^\perp}P_{\clU_M}\qquad\mbox{and}\qquad P_{\clU_M}=P_{\clU_M}P_{\widetilde \clU_M}^{\clU_M^\perp}
\]
we find
\begin{align*}
\norm{P_{\widetilde \clU_M}^{\clU_M^\perp}p}{H}^2&=\norm{P_{\widetilde \clU_M}^{\clU_M^\perp}P_{\clU_M}P_{\widetilde \clU_M}^{\clU_M^\perp}p}{H}^2
\le\norm{P_{\widetilde \clU_M}^{\clU_M^\perp}}{\clL(H)}^2 \norm{P_{\clU_M}P_{\widetilde \clU_M}^{\clU_M^\perp}p}{H}^2=\norm{P_{\widetilde \clU_M}^{\clU_M^\perp}}{\clL(H)}^2 \norm{p}{H}^2
\end{align*}
and, since by the hypothesis we have that $(\clK_M(t,p),p)_H\le-\overline\lambda\norm{p}{H}^2$, we arrive at the desired inequality
$
(\clK_M(t,p),p)_H\le-\overline\lambda\norm{P_{\widetilde \clU_M}^{\clU_M^\perp}}{\clL(H)}^{-2}\norm{P_{\widetilde \clU_M}^{\clU_M^\perp}p}{H}^2
$.
\end{proof}

\subsection{Proof of Theorem~\ref{T:main}}\label{sS:proofT:main}
Let us fix an auxiliary subspace~$\widetilde U_M$ as in  Lemma~\ref{L:monot-obli}. Multiplying the dynamics   in~\eqref{sys-y-intro-K} by~$y$,
 we find\black
\begin{align}
\tfrac12\tfrac{\ed}{\ed t}\norm{y}{H}^2 &=-\langle Ay+A_{\rm rc}y+\clN(y),y\rangle_{V',V}
+\bigl(\clK_M( P_{\clU_{M}}y),P_{\clU_{M}}y\bigr)_H\notag\\
&\le-\norm{y}{V}^2-\langle A_{\rm rc}y+\clN(y),y\rangle_{V',V}
-\overline\lambda \norm{P_{\widetilde \clU_M}^{\clU_M^\perp}}{\clL(H)}^{-2} \norm{P_{\widetilde \clU_M}^{\clU_M^\perp} y}{H}^2.\label{main-1test}
\end{align}

Now, using Lemmas~\ref{L:Arc-estim} and~\ref{L:NN2}, we obtain
\begin{align*}
 2\langle A_{\rm rc}y,y\rangle_{V',V}&\le2\gamma_1\norm{y}{V}^2 +2\gamma_1^{-1}C_{\rm rc}^2\norm{y}{H}^2,\quad\mbox{for all}\quad \gamma_1>0,\\
 2\langle \clN(y),Ay\rangle_{V',V}&\le\gamma_2\norm{y}{V}^2 +\ovlineC{\gamma_2^{-1},C_\clN}(1+\norm{y}{H}^p)\norm{y}{H}^2,\quad\mbox{for all}\quad \gamma_2>0,
\end{align*}
which lead us to
\begin{subequations}\label{main-2test}
\begin{align}
\tfrac{\ed}{\ed t}\norm{y}{H}^2
&\le-(2-2\gamma_1-\gamma_2)\norm{y}{V}^2
+\left(2\gamma_1^{-1}C_{\rm rc}^2+\ovlineC{\gamma_2^{-1},C_\clN}(1+\norm{y}{H}^p)\right)\norm{y}{H}^2
-2\overline\lambda \norm{P_{\widetilde \clU_M}^{\clU_M^\perp}}{\clL(H)}^{-2} \norm{P_{\widetilde\clU_{M}}^{\clU_{M}^\perp}y}{H}^2\notag\\
&\le-(2-2\gamma_1-\gamma_2)\norm{y}{V}^2
+\left(\clD_1+\clD_2\norm{y}{H}^p\right)\norm{y}{H}^2
-2\overline\lambda \norm{P_{\widetilde \clU_M}^{\clU_M^\perp}}{\clL(H)}^{-2} \norm{ P_{\widetilde\clU_{M}}^{\clU_{M}^\perp}y}{H}^2,
\intertext{with}
\clD_1&\coloneqq2\gamma_1^{-1}C_{\rm rc}^2+\ovlineC{\gamma_2^{-1},C_\clN}
\quad\mbox{and}\quad \clD_2\coloneqq\ovlineC{\gamma_2^{-1},C_\clN}.
\end{align}
\end{subequations}
Now, we set~$(\gamma_1,\gamma_2)=(\frac14,\frac12)$, and obtain
\begin{equation}\label{dtny.DAV}
\tfrac{\ed}{\ed t}\norm{y}{H}^2
\le-\norm{y}{V}^2 -2\overline\lambda\norm{P_{\widetilde \clU_M}^{\clU_M^\perp}}{\clL(H)}^{-2} \norm{ P_{\widetilde\clU_{M}}^{\clU_{M}^\perp}y}{H}^2
+\left(\clD_1+\clD_2\norm{y}{H}^p\right)\norm{y}{H}^2.
\end{equation}

For arbitrary given~$\mu>0$ and~$R>0$  we set
\begin{equation}\label{clD0}
\clD_0=2\mu +\left(\clD_1+\clD_2R^p\right)
\end{equation}
and we use Lemma~\ref{L:MlamPoinc}, with~$\zeta=\clD_0$ and~$\Xi_M=\norm{P_{\widetilde \clU_M}^{\clU_M^\perp}}{\clL(H)}^{-2}$, to conclude that for~$M$ and~$\overline\lambda$ large enough we have
\begin{equation}\label{dtny.V}
\tfrac{\ed}{\ed t}\norm{y}{H}^2
\le-\left(\clD_0-
\clD_1-\clD_2\norm{y}{H}^p\right)\norm{y}{H}^2=-\left( 2\mu+\clD_2R^p\black-\clD_2\norm{y}{H}^p\right)\norm{y}{H}^2
\end{equation}
and, by using~\cite[Prop.~4.3]{Rod20}, it follows that
\begin{equation}\label{ny.V}
\norm{y(t)}{H}^2\le\ex^{-2\mu(t-s)}\norm{y(s)}{H}^2,\quad\mbox{for all}\quad t\ge s\ge0, \quad\mbox{provided}\quad \norm{y(0)}{H}\le R.
\end{equation}

Finally, by~\eqref{ny.V} it follows that the  norm~$\norm{y(t)}{H}^2$ strictly decreases at time~$t=s$ if~$\norm{y(s)}{H}^2\ne 0$. See,
for example,~\cite[Lem.~3.3]{Rod_pp20-CL}.
\qed

\begin{remark}\label{R:depMlam1}
Observe that after fixing~$(\gamma_1,\gamma_2)=(\frac14,\frac12)$ in Lemma~\ref{L:MlamPoinc}, it is sufficient to take
$M=\ovlineC{\zeta}$ and~$\overline\lambda=\ovlineC{\zeta,\xi_M^2\Xi_M^{-1}}$ large enough.
So, it is enough to take large enough $M=\ovlineC{R,\mu,C_{\rm rc},C_\clN}$
and large enough~$\overline\lambda =\ovlineC{R,\mu,C_{\rm rc},C_\clN,\xi_M^2\Xi_M^{-1}}$.
Defining the constants~$\varsigma_M=\max\limits_{1\le m\le M}\xi_m^2\Xi_m^{-1}$ and observing that~$\varsigma_M=\ovlineC{M}=\ovlineC{\zeta}$ we can conclude that it is enough to take
$\overline\lambda =\ovlineC{R,\mu,C_{\rm rc},C_\clN}$.
\end{remark}

\subsection{On the existence and uniqueness of the solutions for the controlled system}\label{sS:exist-solut}
Note that, in Section~\ref{sS:proofT:main}, we have proven the stability of
system~\eqref{sys-y-intro-K} for large enough~$M$ and~$\overline\lambda$, where we have implicitly
assumed that weak solutions do exist. The existence and uniqueness of weak solutions can be proven
by following similar arguments as in~\cite[Sect.~4.3]{Rod20}. Indeed,
inequalities~\eqref{dtny.DAV}, \eqref{dtny.V}, and~\eqref{ny.V} will also hold for
Galerkin approximations of~\eqref{sys-y-intro-K} given by
\begin{align}\label{sys-y-intro-K-GalN}
 \dot y^N +Ay^N+\widehat P_{\clE_N^{\rm f}}A_{\rm rc}(t)y^N +\widehat P_{\clE_N^{\rm f}}\clN(t,y^N) =\widehat P_{\clE_N^{\rm f}}\clK_{M}\left(t,P_{\clU_{M}} y^N\right),\qquad y^N(0)= \widehat P_{\clE_N^{\rm f}}y_0,
\end{align}
where~$\widehat P_{\clE_N^{\rm f}}\in\clL(V')$ stands  for the orthogonal projection in~$V'$ onto the subspace~$\clE_N^{\rm f}=\linspan\{e_i\mid 1\le i\le N\}$ spanned by the first eigenfuntions of~$A$. It is not difficult to observe (e.g., from the Fourier expansion of an element in~$V'$) that~$\widehat P_{\clE_N^{\rm f}}\in\clL(V')$ is an extension of  the orthogonal projection~$P_{\clE_N^{\rm f}}\in\clL(H)$ in~$H$ onto the subspace~$\clE_N^{\rm f}$.

Therefore, for arbitrary given~$T>0$ and~$R>0$, by the analogue to~\eqref{ny.V} we will have that for large enough~$M$ and~$\lambda$, if
$\norm{y_0}{H}\le R$, then
\begin{equation}\label{dtny.V-Gal}
\norm{y^N}{L^\infty((0,T),H)}^2\le \norm{y^N(0)}{H}^2\le  \norm{y(0)}{H}^2.
\end{equation}
Then, by the analogous of~\eqref{dtny.DAV}, we will also find that
\begin{equation}\label{dtny.DAV-Gal}
\norm{y}{L^2((0,T),V)}^2\le C_2\norm{y(0)}{H}^2,
\end{equation}
with~$C_2$ independent of~$N$. Next, we can use~\eqref{sys-y-intro-K-GalN}
together with Assumptions~\ref{A:A1}  and~\ref{A:NN} to obtain
\begin{equation}\label{dtny.H-Gal}
\norm{\dot y}{L^2((0,T),V')}^2\le C_3\norm{y(0)}{H}^2.
\end{equation}
Notice, in particular, that
\[
\norm{\widehat P_{\clE_N^{\rm f}}\black\clN(t,y)}{V'}^2\le\norm{\clN(t,y)}{V'}^2.
\]
 Hence\black from Assumption~\ref{A:NN} (with~$(y,0)$ in the role of~$(y_1,y_2)$) it follows that
\begin{align*}
\norm{\widehat P_{\clE_N^{\rm f}}\black\clN(t,y)}{V'}^2&\le C_\clN^2 \left(\textstyle\sum\limits_{j=1}^{n}
   \norm{y}{H}^{\zeta_{1j}+\delta_{1j}}\norm{y}{V}^{\zeta_{2j}+\delta_{2j}}\right)^2
\le n C_\clN^2\textstyle\sum\limits_{j=1}^{n}
   \norm{y}{H}^{2(\zeta_{1j}+\delta_{1j})}(1+\norm{y}{V}^{2}).
\end{align*}

Next, we can show that a weak limit of a suitable subsequence of such Galerkin approximations is a
weak solution for system~\eqref{sys-y-intro-K} defined on the time interval~$I_T\coloneqq(0,T)$. Indeed, from~\eqref{dtny.DAV-Gal} and~\eqref{dtny.H-Gal} we can conclude that,
there exists a subsequence~$y^{N_s}$ of~$y^{N}$,   converging weakly as
\begin{align*}
y^{N_s}\xrightharpoonup[L^2(I_T,V)]{} y^{\infty}\quad\mbox{and}\quad \dot y^{N_s}\xrightharpoonup[L^2(I_T,V')]{} \dot y^{\infty}
\end{align*}
for some~$y^\infty\in W(I_T,V,V')\coloneqq \{z\in L^2(I_T,V)\mid \dot z\in L^2(I_T,V')\}$. This implies that the linear terms, for such subsequence, satisfy
\begin{align*}
Ay^{N_s}\xrightharpoonup[L^2(I_T,V')]{} Ay^{\infty}\quad\mbox{and}\quad  A_{\rm rc}y^{N_s}\xrightharpoonup[L^2(I_T,V')]{} A_{\rm rc} y^{\infty}.
\end{align*}

Now, recalling that we have the compact inclusion~$W(I_T,V,V')\xhookrightarrow{\rm c}L^2(I_T,H)$, we can also assume the strong limit
\begin{align*}
y^{N_s}\xrightarrow[L^2(I_T,H)]{} y^{\infty}.
\end{align*}

For the nonlinear term~$\clN(t,y^{N_s})$,
using Assumption~\ref{A:NN}, we find
\begin{align*}
&\norm{\clN(t,y^{N_s})-\clN(t,y^\infty)}{V'}\le C_\clN\textstyle\sum\limits_{j=1}^{n}
  \left( \norm{y^{N_s}}{H}^{\zeta_{1j}}\norm{y^{N_s}}{V}^{\zeta_{2j}}+\norm{y^\infty}{H}^{\zeta_{1j}}\norm{y^\infty}{V}^{\zeta_{2j}}\right)
   \norm{d^{N_s}}{H}^{\delta_{1j}}\norm{d^{N_s}}{V}^{\delta_{2j}},
\end{align*}
with~$d^{N_s}=y^{N_s}-y^\infty$.
Recalling that~$\delta_{2j}+\zeta_{2j}<1$, by the H\"older inequality we can write
\begin{align*}
 &\norm{\clN(t,y^{N_s})-\clN(t,y^\infty)}{L^2(I_T,V')} \notag\\
&\hspace*{2em}\le C_{\clN}\textstyle\sum\limits_{j=1}^n\norm{\left( \sum\limits_{w\in\{y^{N_s},y^\infty\}}\norm{w}{H}^{\zeta_{1j}}\norm{w}{V}^{\zeta_{2j}}
 \right)
 \norm{d^{N_s}}{V}^{\delta_{2j}}}{L^\frac{2}{\zeta_{2j}+\delta_{2j}}\!(I_T,\bbR)}
 \norm{\norm{d^{N_s}}{H}^{\delta_{1j}}}{L^\frac{2}{1-\zeta_{2j}-\delta_{2j}}\!(I_T,\bbR)} \\
 &\hspace*{2em}\le C_8\textstyle\sum\limits_{j=1}^n\textstyle\sum\limits_{w\in\{y^{N_s},y^\infty\}}
\norm{\norm{w}{V}^{\zeta_{2j}}
  \norm{d^{N_s}}{V}^{\delta_{2j}}}{L^\frac{2}{\zeta_{2j}+\delta_{2j}}(I_T,\bbR)}
 \norm{d^{N_s}}{L^{2}(I_T,H)}^{1-\zeta_{2j}-\delta_{2j}}.
\end{align*}
In the last inequality we have used the fact that, from $\delta_{1j}+\delta_{2j}\ge1$,
it follows~$\frac{2\delta_{1j}}{1-\zeta_{2j}-\delta_{2j}}\ge 2$,
and thus~$\norm{\norm{d^{N_s}}{H}^{\delta_{1j}}}{L^\frac{2}{1-\zeta_{2j}-\delta_{2j}}(I_T,\bbR)}
\le C_9\norm{d^{N_s}}{L^{2}(I_T,H)}^{1-\zeta_{2j}-\delta_{2j}}$,
because~$d^{N_s}$ is uniformly bounded in~$L^{\infty}(I_T,H)$.
Observe also that by the Young inequality
\[
 \norm{w}{V}^{\frac{2\zeta_{2j}}{\zeta_{2j}+\delta_{2j}}}
  \norm{d^{N_s}}{V}^{\frac{2\delta_{2j}}{\zeta_{2j}+\delta_{2j}}}
  \le
  \norm{w}{V}^{2}
  +\norm{d^{N_s}}{V}^{2},
\]
which leads us to
\begin{align*}
\norm{\norm{w}{V}^{\zeta_{2j}}
  \norm{d^{N_s}}{V}^{\delta_{2j}}}{L^\frac{2}{\zeta_{2j}+\delta_{2j}}(I_T,\bbR)}
  &\le \norm{\norm{w}{V}^{\frac{2\zeta_{2j}}{\zeta_{2j}+\delta_{2j}}}
  \norm{d^{N_s}}{V}^{\frac{2\delta_{2j}}{\zeta_{2j}+\delta_{2j}}}}{L^1(I_T,\bbR)}^\frac{\zeta_{2j}+\delta_{2j}}{2}\\
  &\le\left(\norm{w}{L^2(I_T,V)}^{2}+\norm{d^{N_s}}{L^2(I_T,V)}^2\right)^{\frac{\zeta_{2j}+\delta_{2j}}{2}}
  \end{align*}
and consequently, since~$1-\zeta_{2j}-\delta_{2j}>0$, we arrive at
\begin{align*}
 &\norm{\clN(t,y^{N_s})-\clN(t,y^\infty)}{L^2(I_T,V')} \le C_{10}\norm{d^{N_s}}{L^{2}(I_T,H)}^{1-\zeta_{2j}-\delta_{2j}}
 \xrightarrow[N\to+\infty]{} 0.
\end{align*}

Analogously, since by Assumption~\ref{A:monotone} the nonlinear input feedback operator  is in the class of
nonlinearities as~$\clN$ in Assumption~\ref{A:NN}, we can repeat the arguments above to conclude that
\begin{align*}
 &\norm{\clK_M(t, P_{\clU_{M}}y^{N_s})-\clK_M(t,P_{\clU_{M}}y^\infty)}{L^2(I_T,V')} \le C_{11}\norm{P_{\clU_{M}}d^{N_s}}{L^{2}(I_T,H)}^{1-\zeta_{2j}-\delta_{2j}}
 \xrightarrow[N\to+\infty]{} 0.
\end{align*}

The above limits allow us to conclude that~$y^\infty$ is a weak solution for our system.

The uniqueness of the such weak solution can be concluded by using
standard arguments for the linear terms, and by using the estimate~\eqref{NNyAy} for the nonlinear terms~$\clN(t,y)$ and~$\clK_M(t,P_{\clU_{M}} y)$.
 We skip the details and refer the reader to the corresponding arguments in~\cite[Sect.~4.3]{Rod20}.

Finally, note that since~$T>0$ is arbitrary, we can conclude the existence and uniqueness of a weak solution defined for all time~$t>0$.

\subsection{On the explicit linear feedback operators}\label{sS:RmksExplicitFeed}

We show that the family
of linear feedback input operators in system~\eqref{sys-y-intro-KId} are indeed well defined and stabilizing for large enough~$M$ and ~$\lambda$.

We start with two results on the properties of the oblique projection~$P_{\widetilde\clU_{M}}^{\clU_{M}^\perp}\in\clL(H)$.

\begin{lemma}\label{L:restPUM}
We have that $P_{\widetilde\clU_{M}}^{\clU_{M}^\perp}\in\clL(H)$ restricted to~$V\subset H$ is an operator~$\widetilde P_{\widetilde\clU_{M}}^{\clU_{M}^\perp}\in \clL(V)$.
\end{lemma}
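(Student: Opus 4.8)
The plan is to exploit two facts: the range of the projection is finite-dimensional, and it is already contained in $V$. Everything else is a routine chaining of bounds.

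First I would record the structural observation underlying the whole statement. The oblique projection $P_{\widetilde\clU_{M}}^{\clU_{M}^\perp}\in\clL(H)$ has range exactly $\widetilde\clU_M$, and by~\eqref{eqA:dir+} we have $\widetilde\clU_M\subset V$. Consequently, for \emph{every} $h\in H$ the image $P_{\widetilde\clU_{M}}^{\clU_{M}^\perp}h$ lies in $\widetilde\clU_M\subset V$; in particular the restriction to $V\subset H$ maps $V$ into $V$, so that $\widetilde P_{\widetilde\clU_{M}}^{\clU_{M}^\perp}\colon V\to V$ is well defined as a linear map. Note that well-definedness here is immediate and does not even use that the input lies in $V$.

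It then remains to prove continuity, i.e.\ a bound $\norm{P_{\widetilde\clU_{M}}^{\clU_{M}^\perp}y}{V}\le C\norm{y}{V}$ for $y\in V$. The one point to be careful about is that the $V$- and $H$-norms are \emph{not} comparable on all of $H$, so continuity on $V$ cannot simply be inherited from continuity on $H$. They are, however, comparable on the finite-dimensional subspace $\widetilde\clU_M$ where the image lives, and this is exactly what rescues the argument. Writing $\xi_M$ for the constant with $\xi_M^2=\sup_{\vartheta\,\in\,\widetilde\clU_{M}\setminus\{0\}}\norm{\vartheta}{V}^2\norm{\vartheta}{H}^{-2}$ (as in Lemma~\ref{L:MlamPoinc}), I would estimate, for $y\in V$,
\begin{align*}
\norm{P_{\widetilde\clU_{M}}^{\clU_{M}^\perp}y}{V}
&\le \xi_M\norm{P_{\widetilde\clU_{M}}^{\clU_{M}^\perp}y}{H}
\le \xi_M\norm{P_{\widetilde\clU_{M}}^{\clU_{M}^\perp}}{\clL(H)}\norm{y}{H}
\le \xi_M\norm{P_{\widetilde\clU_{M}}^{\clU_{M}^\perp}}{\clL(H)}\norm{\Id}{\clL(V,H)}\norm{y}{V},
\end{align*}
where the first step uses norm equivalence on $\widetilde\clU_M$, the second the boundedness of the projection on $H$, and the third the continuous embedding $V\xhookrightarrow{}H$.

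This yields $\widetilde P_{\widetilde\clU_{M}}^{\clU_{M}^\perp}\in\clL(V)$, with operator norm at most $\xi_M\norm{P_{\widetilde\clU_{M}}^{\clU_{M}^\perp}}{\clL(H)}\norm{\Id}{\clL(V,H)}$, which completes the proof. There is no substantial obstacle here; the only subtlety worth flagging is the one just mentioned, namely that $V$-continuity is recovered solely because the range is finite-dimensional and lies in $V$, so that the $V$- and $H$-topologies agree on it.
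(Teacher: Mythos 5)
Your proof is correct and follows essentially the same route as the paper: both arguments rest on the range $\widetilde\clU_M\subset V$ being finite-dimensional, so that the $V$- and $H$-norms are equivalent there, and then chain this with the $H$-boundedness of $P_{\widetilde\clU_{M}}^{\clU_{M}^\perp}$ and the embedding $V\xhookrightarrow{}H$. The only cosmetic difference is that you identify the norm-equivalence constant explicitly as the $\xi_M$ from Lemma~\ref{L:MlamPoinc}, where the paper uses an unspecified constant $C_M$.
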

\begin{proof}
Since~$\widetilde\clU_{M}\subset V$ is a finite-dimensional space, for a suitable constant~$C_M>0$ we have
\[
\norm{\widetilde P_{\widetilde\clU_{M}}^{\clU_{M}^\perp}\phi}{V}=\norm{P_{\widetilde\clU_{M}}^{\clU_{M}^\perp}\phi}{V}\le C_M\norm{P_{\widetilde\clU_{M}}^{\clU_{M}^\perp}\phi}{H}\le C_M \norm{P_{\widetilde\clU_{M}}^{\clU_{M}^\perp}}{\clL(H)}\norm{\phi}{H},\quad\mbox{for all}\quad\phi\in V,
\]
which implies $\norm{P_{\widetilde\clU_{M}}^{\clU_{M}^\perp}}{\clL(V)}\le C_M \norm{P_{\widetilde\clU_{M}}^{\clU_{M}^\perp}}{\clL(H)}\norm{\Id}{\clL(V,H)}$.
\end{proof}
\begin{lemma}\label{L:extPUM}
We have that $P_{\clU_{M}}^{\widetilde\clU_{M}^\perp}\in\clL(H)$ can be extended to an operator~$\widehat P_{\clU_{M}}^{\widetilde\clU_{M}^\perp}\in \clL(V')$, by setting
\[
 \widehat P_{\clU_{M}}^{\widetilde\clU_{M}^\perp}f\in\clU_M\quad\mbox{such that}\quad\langle  \widehat P_{\clU_{M}}^{\widetilde\clU_{M}^\perp}f,\phi\rangle_{V',V}\coloneqq
\langle  f, P_{\widetilde\clU_{M}}^{\clU_{M}^\perp}\phi\rangle_{V',V},\quad\mbox{for all}\quad(f,\phi)\in V'\times V.
\]
\end{lemma}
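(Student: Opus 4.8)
The plan is to read the proposed formula as the transpose, with respect to the $V',V$ duality, of the restriction operator furnished by Lemma~\ref{L:restPUM}, and then to verify in turn that the resulting object is a bounded operator on $V'$, that it genuinely extends $P_{\clU_{M}}^{\widetilde\clU_{M}^\perp}$, and that its range lands in $\clU_M$ as claimed.

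First I would invoke Lemma~\ref{L:restPUM} to view $P_{\widetilde\clU_{M}}^{\clU_{M}^\perp}$ as a bounded operator $\widetilde P_{\widetilde\clU_{M}}^{\clU_{M}^\perp}\in\clL(V)$. For a fixed $f\in V'$, the assignment $\phi\mapsto\langle f,\widetilde P_{\widetilde\clU_{M}}^{\clU_{M}^\perp}\phi\rangle_{V',V}$ is the composition of this bounded operator on $V$ with the bounded functional $f$, hence a bounded linear functional on $V$; this functional is precisely the element $\widehat P_{\clU_{M}}^{\widetilde\clU_{M}^\perp}f\in V'$. The bound $\norm{\widehat P_{\clU_{M}}^{\widetilde\clU_{M}^\perp}f}{V'}\le\norm{\widetilde P_{\widetilde\clU_{M}}^{\clU_{M}^\perp}}{\clL(V)}\,\norm{f}{V'}$, uniform in $f$, then yields $\widehat P_{\clU_{M}}^{\widetilde\clU_{M}^\perp}\in\clL(V')$.

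Next I would check the extension property on the dense subspace $H\subset V'$. For $h\in H$ and $\phi\in V$ the pairing reduces to the $H$-inner product, and the identity $(P_{\clU_{M}}^{\widetilde\clU_{M}^\perp})^*=P_{\widetilde\clU_{M}}^{\clU_{M}^\perp}$ in $\clL(H)$ gives $\langle h,P_{\widetilde\clU_{M}}^{\clU_{M}^\perp}\phi\rangle_{V',V}=(h,P_{\widetilde\clU_{M}}^{\clU_{M}^\perp}\phi)_H=(P_{\clU_{M}}^{\widetilde\clU_{M}^\perp}h,\phi)_H$. Since $\phi\in V$ was arbitrary, this shows $\widehat P_{\clU_{M}}^{\widetilde\clU_{M}^\perp}h=P_{\clU_{M}}^{\widetilde\clU_{M}^\perp}h\in\clU_M$, so $\widehat P_{\clU_{M}}^{\widetilde\clU_{M}^\perp}$ agrees with $P_{\clU_{M}}^{\widetilde\clU_{M}^\perp}$ on $H$ and already takes values in $\clU_M$ there.

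Finally, to confirm that the range remains in $\clU_M$ for every $f\in V'$, I would use that $H$ is dense in $V'$ together with the fact that $\clU_M$, being finite-dimensional, is closed in $V'$: choosing $h_n\in H$ with $h_n\to f$ in $V'$, one has $\widehat P_{\clU_{M}}^{\widetilde\clU_{M}^\perp}h_n=P_{\clU_{M}}^{\widetilde\clU_{M}^\perp}h_n\in\clU_M$, and passing to the limit via the continuity of $\widehat P_{\clU_{M}}^{\widetilde\clU_{M}^\perp}$ on $V'$ forces $\widehat P_{\clU_{M}}^{\widetilde\clU_{M}^\perp}f\in\clU_M$. I expect the only genuinely delicate point to be this last step, namely certifying membership in $\clU_M$ rather than merely in $V'$; it is dispatched cleanly by the density and closedness argument (or, alternatively, by expanding $\widetilde P_{\widetilde\clU_{M}}^{\clU_{M}^\perp}\phi$ in a basis of $\widetilde\clU_M$ and reading off the coefficients of $\widehat P_{\clU_{M}}^{\widetilde\clU_{M}^\perp}f$ against a dual basis of $\clU_M$).
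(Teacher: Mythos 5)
Your proof is correct and follows essentially the same route as the paper's: the continuity estimate via the restriction $\widetilde P_{\widetilde\clU_{M}}^{\clU_{M}^\perp}\in \clL(V)$ from Lemma~\ref{L:restPUM}, and the extension property on~$H$ via the adjoint identity $(P_{\clU_{M}}^{\widetilde\clU_{M}^\perp})^*=P_{\widetilde\clU_{M}}^{\clU_{M}^\perp}$ in~$\clL(H)$, are exactly the paper's two steps. Your closing density-and-closedness argument certifying that $\widehat P_{\clU_{M}}^{\widetilde\clU_{M}^\perp}f\in\clU_M$ for every $f\in V'$ (using that $H$ is dense in $V'$ and the finite-dimensional $\clU_M$ is closed there) is a correct verification of a claim stated in the lemma that the paper's own proof leaves implicit, so it is a welcome refinement rather than a departure.
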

\begin{proof}
Note that for all~$g\in H\xhookrightarrow{\rm d}V'$, we have that
\[
\langle  \widehat P_{\clU_{M}}^{\widetilde\clU_{M}^\perp}g,\phi\rangle_{V',V}=
\langle  g, P_{\widetilde\clU_{M}}^{\clU_{M}^\perp}\phi\rangle_{V',V}=(  g, P_{\widetilde\clU_{M}}^{\clU_{M}}\phi)_H=(P_{\clU_{M}}^{\widetilde\clU_{M}^\perp}g,\phi)_H,\quad\mbox{for all}\quad\phi\in V\xhookrightarrow{\rm d}H.
\]
Hence, $\widehat P_{\clU_{M}}^{\widetilde\clU_{M}^\perp}$ is a linear extension of $P_{\clU_{M}}^{\widetilde\clU_{M}^\perp}$ to~$V'\supset H$. The continuity follows from
\[
\norm{\widehat P_{\clU_{M}}^{\widetilde\clU_{M}^\perp}f}{V'}
=\sup_{\phi\in V\setminus\{0\}}\tfrac{ \norm{\langle  f, P_{\widetilde\clU_{M}}^{\clU_{M}^\perp}\phi\rangle_{V',V}}{\bbR} }{ \norm{\phi}{V} }
\le\sup_{\phi\in V\setminus\{0\}}\tfrac{ \norm{f}{V'}\norm{\widetilde P_{\widetilde\clU_{M}}^{\clU_{M}^\perp}\phi}{V} }{ \norm{\phi}{V} },
\]
which gives us $\norm{\widehat P_{\clU_{M}}^{\widetilde\clU_{M}^\perp}f}{V'}
\le  \norm{\widetilde P_{\widetilde\clU_{M}}^{\clU_{M}^\perp}}{\clL(V)} \norm{f}{V'}$, with~$\widetilde P_{\widetilde\clU_{M}}^{\clU_{M}^\perp}$ as in Lemma~\ref{L:restPUM}.
\end{proof}

For simplicity,  we still denote the restriction and extension of~$P_{\widetilde\clU_{M}}^{\clU_{M}^\perp}$, in Lemmas~\ref{L:restPUM} and~\ref{L:extPUM},  by~$P_{\widetilde\clU_{M}}^{\clU_{M}^\perp}$, that is,
\[
\widetilde P_{\clU_M}^{\widetilde\clU_{M}^\perp}=P_{\clU_M}^{\widetilde\clU_{M}^\perp}
\quad\mbox{and}\quad \widehat P_{\widetilde\clU_{M}}^{\clU_{M}^\perp}=P_{\widetilde\clU_{M}}^{\clU_{M}^\perp}.
\]

In particular, for this notation, we can see that the input feedback operator in~\eqref{sys-y-intro-KId} is well defined
\[
\clK_M^{\lambda,\rho} y=-\lambda P_{\clU_M}^{\widetilde\clU_{M}^\perp}A^\rho P_{\widetilde\clU_M}^{\clU_{M}^\perp}y
=-\lambda \widehat P_{\clU_M}^{\widetilde\clU_{M}^\perp}A^\rho P_{\widetilde\clU_M}^{\clU_{M}^\perp}y,\quad\mbox{for every}\quad y\in H, \quad\rho\le1.
\]
Indeed ~$\vartheta\coloneqq P_{\widetilde\clU_M}^{\clU_{M}^\perp}y\in V$ and~$A^\rho\vartheta\in\rmD(A^\frac{1-2\rho}2)\xhookrightarrow{}V'$. Furthermore, ~$\clK_M^{\lambda,\rho}\in\clL(H)$ with
\[
\norm{\clK_M^{\lambda,\rho}}{\clL(H)}=\lambda \norm{P_{\clU_M}^{\widetilde\clU_{M}^\perp}A^\rho P_{\widetilde\clU_M}^{\clU_{M}^\perp}}{\clL(H)}
\le \lambda \norm{\Id\rest{\widetilde\clU_M}}{\clL(V',H)}\black\norm{P_{\clU_M}^{\widetilde\clU_{M}^\perp}}{\clL(V')}\norm{A^\rho}{\clL(V,V')}\norm{\Id\rest{\widetilde\clU_M}}{\clL(H,V)}\norm{P_{\widetilde\clU_M}^{\clU_{M}^\perp}}{\clL(H)}.
\]

The operator~$\clK_M^{\lambda,\rho}(t,\Bigcdot)\coloneqq-\lambda A^\rho p$, $p\in\widetilde\clU_M$ is also
monotone as in Definition~\ref{D:monot},
\begin{equation}\label{monot-expli.barlam}
(\clK_M^{\lambda,\rho}p, p)_H=-\lambda\norm{p}{\rmD(A^\frac{\rho}2)}^2
\le-\lambda \norm{\Id\rest{\widetilde \clU_M}}{\clL(\rmD(A^\frac{\rho}2),H)}^{-2}\norm{p}{H}^2=-\overline\lambda\norm{p}{H}^2,
\end{equation}
with the monotonicity constant~$\overline\lambda=\lambda \norm{\Id\rest{\widetilde \clU_M}}{\clL(\rmD(A^\frac{\rho}2),H)}^{-2}$.

Next, we give the proof of Main Corollary, in the Introduction, which we
now write more precisely as follows.
\begin{corollary}\label{C:main-expli-1}
 Let Assumptions~\ref{A:A0sp}--\ref{A:poincare} hold true, let\black $\rho\le1$, ~$R>0$, and~$\mu>0$ be given,
 and let~$H=\clU_{M}\oplus \widetilde\clU_{M}^\perp$. Then,  if~$M\in\bbN_0$ and~$\lambda>0$  are large enough,
the solution of systems~\eqref{sys-y-intro-KId} and~\eqref{sys-y-intro-KId-rho}
 satisfy
 \[
  \norm{y(t)}{H}\le\ex^{-\mu (t-s)}\norm{y(s)}{H},\quad\mbox{for all}\quad t\ge s\ge0\quad\mbox{and all}\quad y_0\in H\quad\mbox{with}\quad \norm{y_0}{H}<R.
 \]
In particular, $t\mapsto \norm{y(t)}{H}$ is strictly decreasing at time~$t=s$, if~$\norm{y(s)}{H}\ne0$.
Furthermore,~$M$ and~$\overline\lambda$ can be chosen
as~$M=\ovlineC{R,\mu,C_{\rm rc},C_\clN}$ and $\lambda=\ovlineC{R,\mu,C_{\rm rc},,C_\clN}$.
\end{corollary}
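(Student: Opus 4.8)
The plan is to obtain Corollary~\ref{C:main-expli-1} as an immediate consequence of Theorem~\ref{T:main}, by checking that the two explicit feedbacks~$\breve\clK_M^{\lambda}=-\lambda\Id$ and~$\clK_M^{\lambda,\rho}$ fall within the class covered by that theorem. Concretely, I would verify that each induces an operator~$\clK_M$ satisfying Assumption~\ref{A:monotone}, after which Theorem~\ref{T:main} applies verbatim. Since most of the analytic work has already been carried out in Section~\ref{sS:RmksExplicitFeed}, the argument is mainly a matter of assembling those facts.

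First I would check the structural hypotheses~\eqref{FeedK-intro}. Both feedbacks are linear, so~$\clK_M(t,\Bigcdot)\in\clC(\clU_M,\clU_M)$ and~$\clK_M(t,0)=0$ are automatic. For the regularity demanded by Assumption~\ref{A:monotone}---namely that~$\widehat\clN(t,y)\coloneqq\clK_M(t,P_{\clU_M}y)$ satisfy Assumption~\ref{A:NN}---I would observe that~$y\mapsto-\lambda P_{\clU_M}y$ and~$y\mapsto\clK_M^{\lambda,\rho}y$ are bounded linear operators from~$H$ into the finite-dimensional space~$\clU_M\subset V$; for the latter this is exactly the estimate for~$\norm{\clK_M^{\lambda,\rho}}{\clL(H)}$ established via Lemmas~\ref{L:restPUM} and~\ref{L:extPUM}. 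Since all norms are equivalent on~$\clU_M$, each such operator~$L$ obeys~$\norm{Ly_1-Ly_2}{V'}\le C\norm{y_1-y_2}{H}$, which is Assumption~\ref{A:NN} with the single-term choice~$n=1$,~$\zeta_{11}=\zeta_{21}=0$,~$\delta_{11}=1$,~$\delta_{21}=0$; this meets the constraints~$\zeta_{21}+\delta_{21}<1$ and~$\delta_{11}+\delta_{21}\ge1$, while the growth and local-integrability requirements of Assumption~\ref{A:NN} are immediate for a time-independent bounded linear operator.

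Next I would record the monotonicity constants. For~$\breve\clK_M^{\lambda}$ one has~$(\breve\clK_M^{\lambda}p,p)_H=-\lambda\norm{p}{H}^2$, so~$\overline\lambda=\lambda$; for~$\clK_M^{\lambda,\rho}$ this is precisely~\eqref{monot-expli.barlam}, giving~$\overline\lambda=\lambda\norm{\Id\rest{\widetilde\clU_M}}{\clL(\rmD(A^\frac{\rho}2),H)}^{-2}$. In both cases~$\clK_M\preceq-\overline\lambda\Id$ holds with an~$\overline\lambda$ that, for fixed~$M$, is a fixed positive multiple of~$\lambda$, so Assumption~\ref{A:monotone} is satisfied.

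Finally I would invoke Theorem~\ref{T:main}. Fixing~$M=\ovlineC{R,\mu,C_{\rm rc},C_\clN}$ large enough as dictated there, the multiplicative constant relating~$\overline\lambda$ to~$\lambda$ becomes fixed and positive; hence choosing~$\lambda$ large enough makes~$\overline\lambda$ as large as Theorem~\ref{T:main} requires, and the claimed exponential decay---together with the strict decrease of~$t\mapsto\norm{y(t)}{H}$ at times where it does not vanish---follows at once. The only point needing mild care, rather than a genuine obstacle, is the order of the quantifiers:~$M$ must be chosen first, independently of~$\lambda$, so that the~$M$-dependent operator norm~$\norm{\Id\rest{\widetilde\clU_M}}{\clL(\rmD(A^\frac{\rho}2),H)}$ does not interfere with the subsequent choice of a large~$\lambda$.
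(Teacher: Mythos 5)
Your proposal is correct and follows essentially the same route as the paper's proof: verify the monotonicity constants ($\overline\lambda=\lambda$ for $\breve\clK_M^{\lambda}$, and $\overline\lambda=\lambda \norm{\Id\rest{\widetilde \clU_M}}{\clL(\rmD(A^\frac{\rho}2),H)}^{-2}$ via~\eqref{monot-expli.barlam} for $\clK_M^{\lambda,\rho}$) so that Assumption~\ref{A:monotone} holds, then invoke Theorem~\ref{T:main}, choosing $M$ first so that the $M$-dependent factor linking $\lambda$ to $\overline\lambda$ is fixed --- exactly the quantifier-order point the paper handles in the remark following its proof. Your explicit verification that $\clK_M\circ P_{\clU_M}$ satisfies Assumption~\ref{A:NN} (with $n=1$, $\zeta_{11}=\zeta_{21}=\delta_{21}=0$, $\delta_{11}=1$) is a detail the paper leaves implicit, and it is a correct and welcome addition rather than a deviation.
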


\begin{proof}
We have seen in the Introduction that the
feedback~$\breve\clK_M^\lambda=-\lambda\Id\rest{\clU_M}$ corresponding
to system~\eqref{sys-y-intro-KId} is monotone with~$\overline\lambda=\lambda$.
Thus Assumption~\ref{A:monotone} is satisfied and we conclude,
by Theorem~\ref{T:main}, that system~\eqref{sys-y-intro-KId} is stable for large enough~$M$ and~$\lambda$.

By~\eqref{monot-expli.barlam} we can conclude that Assumption~\ref{A:monotone} holds for the
feedback~$\clK_M^{\lambda,\rho}=-\lambda P_{\clU_{M}}^{\widetilde\clU_{M}^\perp}A^\rho
P_{\widetilde\clU_{M}}^{\clU_{M}^\perp}\rest{\clU_M}$ corresponding to system~\eqref{sys-y-intro-KId-rho},
with~$\overline\lambda=\lambda \norm{\Id\rest{\widetilde \clU_M}}{\clL(\rmD(A^\frac{\rho}2),H)}^{-2}$.
Note that we can make $\overline\lambda$ arbitrarily large by increasing~$\lambda$.
Therefore, by Theorem~\ref{T:main}, system~\eqref{sys-y-intro-KId-rho} is stable for large enough~$M$ and~$\lambda$.
\end{proof}

\begin{remark}
Observe that in Lemma~\ref{L:MlamPoinc} (see also Remark~\ref{R:depMlam1}) it is sufficient to take
$M=\ovlineC{R,\mu,C_{\rm rc},C_\clN}$ and~$\overline\lambda=\ovlineC{R,\mu,C_{\rm rc},C_\clN}$ large enough.
In particular, for~$\lambda$ it is enough to take large
enough~$\lambda =\ovlineC{R,\mu,C_{\rm rc},C_\clN}
\norm{\Id\rest{\widetilde \clU_M}}{\clL(\rmD(A^\frac{\rho}2),H)}^2$.
Defining the constants~$\widehat\varsigma_M\coloneqq\max\limits_{1\le m\le M}
\norm{\Id\rest{\widetilde \clU_M}}{\clL(\rmD(A^\frac{\rho}2),H)}^2$ and observing
that~$\widehat\varsigma_M=\ovlineC{M}$, we can conclude that it is enough to take
$M=\ovlineC{R,\mu,C_{\rm rc},C_\clN}$ and~$\lambda=\ovlineC{R,\mu,C_{\rm rc},C_\clN}$.
\end{remark}

\section{Learning an optimal feedback control}\label{S:learn}
From Theorem~\ref{T:main} we know that for arbitrary $R>0$ any feedback operator in the
form~$\clK_M \circ P_{\clU_M}$ with~$\clK_M\colon\clU_M\to\clU_M$, see~\eqref{FeedK-intro},
allows us to stabilize the solution of system~\eqref{sys-y-intro-K}
from every initial condition~$y_0$ in the closed ball~
$ \overline\clB_{R}^H\coloneqq\{v\in H\mid \norm{v}{H}\le R\}$, provided that~$\clK_{M}$
is monotone in the sense of Definition~\ref{D:monot},
and~$M$ as well as~$\overline\lambda$ are sufficiently large. In particular, these
assumptions cover the linear operator~$\clK_M=-\lambda\Id$. From the application perspective
a next natural question is to find a feedback of this form which is also optimal with
respect to some cost functional. For example, we may seek to find a balance between
the energies of the trajectory and the feedback control. This is the aim of the present section.
Here we restrict ourselves to autonomous functions~$\clK_{M}$, however,
the presented framework can be extended to the nonautonomous case.

\subsection{Feedback control as learning problem}\label{subsec:formlearn}
Define the cost functional
\begin{align*}
J(y,u)=\frac{1}{2}\int^\infty_0 \lbrack |y(t)|^2_V+ \beta |u(t)|^2_H \rbrack~\mathrm{d}t
\end{align*}
where~$\beta>0$ balances the trade-off between the energy of the
trajectory~$y \in L^2((0,\infty),V)$ and the magnitude of the control~$u \in L^2((0,\infty),\clU_M)$.
For the rest of this section we fix the radius~$R$. Moreover for abbreviation we
set~$Y\coloneqq L^2((0,\infty),V)$.

A first idea for defining an ``optimal" low-dimensional feedback law could be to seek for a pair~$(\overline{\mathbf{y}}, \overline{\clK}_M)$ with
\begin{align*}
J(\overline{y},\overline{\clK}_M(P_{\clU_M}\overline{\mathbf{y}}(y_0)))
\le J(\mathbf{y}(y_0),{\clK}_M(P_{\clU_M}\mathbf{y}(y_0)))  \quad \forall y_0 \in \overline{\clB}^H_R,
\end{align*}
amongst all pairs~$(\mathbf{y}, \clK_M)$ of ensemble states~$\mathbf{y}\colon \overline\clB_{R}^H \to Y $ such that~$\mathbf{y}(y_0)$ satisfies~\eqref{sys-y-intro-K} and feedback laws~$\clK_M$ which are monotone in the sense of Definition~\ref{D:monot}.
This is unfeasible, but it guides the way to successful approaches.

Indeed, we shall  pursue an approach similar to~\cite{KunischWalter_arx20} and compute
a feedback minimizing the expected cost of trajectories originating from a ``training set" of initial conditions described by a probability measure~$\fkm$ on~$ \overline\clB_{R}^H$. Moreover we restrict the search for the optimal feedback law to a subset
\begin{align*}
\clC_{\theta} := \operatorname{Im}(\clK^\cdot_M)= \left\{\,\clK^{(\theta)}_M\;|\;\theta \in
\mathbb{R}^N\,\right\} \subset \clC(\clU_M,\clU_M).
\end{align*}
parametrized by a mapping~$\clK^{(\Bigcdot)}_M \black\colon \mathbb{R}^N \to \clC(\clU_M,\clU_M),~N\in \mathbb{N} $.
It is assumed that~$-\lambda \Id \in \clC_{\theta}$ for every~$\lambda\geq 0$. Henceforth we fix~$\overline{\lambda}>0$ and~$\varepsilon>0$. With these specifications we choose~$\overline{K}_{M}=\clK^{\overline{\theta}}_M \in\clC_{\theta}$ as the solution to
\begin{align} \label{def:learningprob}
\inf_{\substack{\mathbf{y} \colon \overline{\clB}^H_R \to Y ,\\ \theta \in \mathbb{R}^N }}
\left \lbrack\int_{\overline{\clB}^H_R} J(\mathbf{y}(y_0),\clK_{M}(P_{\clU_M}
\mathbf{y}(y_0)))\,\rmd\fkm\black(y_0)+ \mathcal{G}(\theta) \right \rbrack, \tag{$\mathcal{P}$}
\end{align}
where~$y=\mathbf{y}(y_0)\in Y$  fulfills
\begin{align} \label{eq:stateeq}
 \dot y +Ay+A_{\rm rc}y +\clN(y) = \clK^{(\theta)}_M( P_{\clU_{M}} y),\qquad y(0)= y_0,
\end{align}
for~$\fkm$-a.e.~$y_0 \in \overline{\clB}^H_R$ and~$\clK^{(\theta)}_M$ satisfies
\begin{align} \label{eq:stateconst}
(\clK^{(\theta)}_M(p), p)_H\le -\overline{\lambda}\norm{p}{H}^2 \quad \forall p \in \overline{\clB}^H_{R+\varepsilon} \cap \clU_M.
\end{align}
Additional penalty terms~$\mathcal{G}\colon \mathbb{R}^N \to \mathbb{R}_+ \cup \{+\infty\}$ can be added to the objective functional to enforce constraints on the parameters~$\theta$ and/or to guarantee the radial unboundedness of the objective functional.
From Theorem \ref{T:main} and~$-\lambda \Id \in \mathcal{C}_\theta$,~$\lambda>0$, we deduce that
for $\overline \lambda$ and~$M \in \mathbb{N}$ large enough, there are feasible points
for problem~\eqref{def:learningprob} under the constraints \eqref{eq:stateeq} and \eqref{eq:stateconst}. Note that, in comparison to Definition~\ref{D:monot}, we only require~\eqref{eq:stateconst} to hold for~$p\in \overline{\clB}^H_{R+\varepsilon} \cap \clU_M$. This change is necessary since enforcing monotonicity on the whole space~$\mathcal{U}_M$ constitutes a numerical burden which is hard to realize in practice. It is readily shown that Theorem~\ref{T:main} still holds for feedback laws~$\mathcal{K}^{(\theta)}_M$ satisfying~\eqref{eq:stateconst}. In particular, if~$\theta\in \mathbb{R}^N$ is admissible, and~$\bar{\lambda}$ as well as~$M$ are large enough then~\eqref{eq:stateeq} is exponentially stable for all~$y_0 \in \overline{\clB}^H_{R}$.
\begin{example} \label{examplsubsets}
In the remainder of this  section we briefly discuss different choices for ~$\clC_{\theta}$. We may simply choose
\begin{align*}
\clC^1_{\theta}=\left\{\, -\lambda \Id\;|\;\lambda \geq 0\,\right\}
\end{align*}
or more generally
\begin{align*}
\clC^2_{\theta}=\left\{\,\mathcal{K}^{(\theta)}_M \in \mathcal{L}(\mathcal{U}_M)\;|\;\left( \mathcal{K}^{(\theta)}_M\right)^*=\mathcal{K}^{(\theta)}_M\,\right\} \simeq \operatorname{Sym}(M_\sigma).
\end{align*}
A feedback law in this form stabilizes initial conditions~$y_0 \in \overline{\clB}^H_R$ at an exponential rate
if its largest eigenvalue is small (negative) enough and if~$M$ is large enough.

Our focus lies on determining low dimensional feedback laws which are induced by realizations of certain neural networks. They will be realized numerically in  Section~\ref{S:simul}.
Such networks have recently received tremendous attention due to their excellent approximation properties in practice. A tuple of parameters
\begin{align*}
\theta=(W_{1,1},W_{2,1},b_1, \cdots, W_{1,L},W_{2,L},b_L) \in \mathcal{R}
\end{align*}
where
\begin{align*}
\mathcal{R}= \bigtimes^{L-1}_{i=1} \left ( \mathbb{R}^{N_{i} \times N_{i-1}} \times \mathbb{R}^{N_{i} \times N_{i-1}} \times \mathbb{R}^{N_i} \right ) \times \mathbb{R}^{N_{L} \times N_{L-1}} \times \mathbb{R}^{N_{L} \times N_0} \times \mathbb{R}^{N_L},~N_i \in \mathbb{N},~N_0=N_L=M_\sigma,
\end{align*}
is called a~\textit{residual network} with~$L\geq 2$ layers.

Fixing the~\textit{activation function}~$\chi \in \clC(\mathbb{R},\mathbb{R})$,
we define the~\textit{realization}~$K^{(\theta)}_M \in \clC(\mathbb{R}^{M_\sigma},
\mathbb{R}^{M_\sigma})$ of~$\theta\in \mathcal{R}$ by
\begin{align}\label{def:realization}
K^{(\theta)}_M(x)= f_{\theta,L} \circ f_{\theta,L-1} \circ \cdots \circ f_{\theta,1}(x)- f_{\theta,L} \circ f_{\theta,L-1} \circ \cdots \circ f_{\theta,1}(0)+W_{2,L}x,
\end{align}
where
\begin{align*}
f_{\theta,i}(x)= \chi(W_{1,i}x+ b_i )+ W_{2,i} x \quad \forall x \in \mathbb{R}^{N_i}
\end{align*}
for~$i=1,\dots,L-1$ and
\begin{align*}
f_{\theta,L}(x)=W_{1,L}x+b_L.
\end{align*}
Here the action of~$\chi$ has to be understood componentwise, and we note that $K^{(\theta)}_M(0)=0$.
Finally set
\begin{align*}
\clC^3_{\theta}= \{\,\mathbb{P}_{\clU_M}^{-1} K^{(\theta)}_M \mathbb{P}_{\clU_M}\;|\;\theta \in \mathcal{R}\,\},
\end{align*}
where~$\mathbb{P}_{\clU_M} \colon \clU_M \to \mathbb{R}^{M_\sigma} $ denotes the coordinate mapping for a basis~$\{\zeta_i\}^{M_\sigma}_{i=1}$ of~$\clU_M$ i.e.
\begin{align*}
\mathbb{P}_{\clU_M}(u)=\mathbb{P}_{\clU_M}\left(\sum^{M_\sigma}_{i=1}\lambda_i \zeta_i\right)=(\lambda_1,\dots,\lambda_{M_\sigma})^\top
\end{align*}
for all~$u \in \clU_M$.
These feedbacks satisfy~$\clK_M^{(\theta)}(0)=0$ by construction and~$-\lambda \Id \in \clC^3_{\theta} $ for every~$\lambda \in\mathbb{R}$.
A suitable penalization term for this type of parametrization is given by
\begin{align*}
\mathcal{G}(\theta)=\frac{\alpha}{2} \|\theta\|_{\mathcal{R}}= \frac{\alpha}{2} \left( \sum^{L}_{i=1} \left  \lbrack \|W_{1,i}\|^2+\|W_{2,i}\|^2+ |b_i|^2 \right \rbrack \right)
\end{align*}
where~$\alpha>0$.
\end{example}

\subsection{Practical realization}
In order to practically compute a stabilizing feedback via problem~\eqref{def:learningprob}
we have to address several discretization aspects. First the state space~$V$ is replaced by a
finite dimensional subspace~$V_h \subset V$. Accordingly we consider discretized diffusion and
reaction operators~$A_h,~A_{\rm rc,h} \colon V_h \to V_h$ as well as a discretization of the
nonlinearity~$\clN_h\colon V_h \to V_h$. Next we cut-off the time integral at some~$T>0$ and
approximate the integral with respect to~$\fkm$ using quadrature
points~$\{y^i_0\}^{N_0}_{i=1} \subset \overline{\clB}^H_{R}$. Finally note that the monotonicity constraint is difficult to implement in practice. Therefore it is replaced by a penalization of the form
\begin{align*}
\mathcal{G}_\gamma^{N_1}(\theta)\coloneqq \frac{\gamma}{(1+\varepsilon_1)\,N_1}
\sum^{N_1}_{j=1}\left((\mathcal{K}^{(\theta)}_M(p_j), p_j)_H
+\overline{\lambda}|p_j|^2\right)^{1+\varepsilon_1}_+
\end{align*}
where~$\{p_j\}^{N_1}_{j=1} \subset \overline{\clB}^H_{R+\varepsilon} \cap \clU_M. $
Moreover~$\gamma >0$ is a penalty parameter,~$\varepsilon_1>0$, and~$(\Bigcdot)_+=\max(\Bigcdot,0)$.
We arrive at the discretized problem
\begin{align} \label{def:discproblem}
\inf_{ \substack{\{y_i\}^{N_0}_{i=1} \subset L^2((0,T),V_h), \\ \theta \in \mathbb{R}^N} }
\left \lbrack \frac{1}{2N_0} \sum^{N_0}_{i=1}  \int^T_0 \left[ |y_i(t)|_V^2
+\beta |\mathcal{K}^{(\theta)}_M(P_{\clU_M} y_i(t))|^2_H]\rmd t\right \rbrack+\mathcal{G}(\theta)+\mathcal{G}^{N_1}_\gamma(\theta) \right \rbrack
\end{align}
subject to
\begin{align} \label{eq:discequation}
\dot y_i +A_hy_i+A_{\rm rc,h}y_i +\clN_h(y_i) =\mathcal{K}^{(\theta)}_M(P_{\clU_M} y_i),\qquad y(0)= y^i_0
\end{align}
for all~$i=1,\dots,N_0$.
For abbreviation denote the objective functional in~\eqref{def:discproblem} by~$j(y_1,\dots,y_{N_0},\theta)$.

From now on we tacitly assume that the nonlinearity~$\mathcal{N}_h$ and the feedback
parametrization~$\clK^\cdot_M$ are such that the induced superposition operators are at
least continuously Fr\'echet differentiable on~$H^1((0,T),V_h)$ and~$\mathbb{R}^N \times H^1((0,T),V_h)$, respectively.
Moreover the penalty term~$\mathcal{G}$ is smooth. Derivatives are denoted by~$``\partial"$ in the following with an
additional subscript if it is a partial one.
In order to actually solve the learning problem algorithmically we will rely on first order methods. Therefore
it remains to argue the differentiability of~$j$ and to give a representation of its gradient. To fix ideas
let~$\{y_i\}^{N_0}_{i=1}$ and~$\overline{\theta} \in \mathbb{R}^N$ be an admissible point, that is, $y_i$
satisfies~\eqref{eq:discequation} given~$\theta=\overline{\theta}$,~$i=1,\dots,N$. In virtue of the
implicit function theorem there is a neighbourhood~$N(\bar{\theta})$ of~$\bar{\theta}$ and a~$\clC^1$ operator
\begin{align*}
S \colon  N(\bar{\theta}) \to   \bigtimes^{N_0}_{i=1} H^1((0,T),V_h), \quad S(\theta)=(y_1(\theta), \dots, y_{N_0}(\theta))
\end{align*}
such that~$y_i(\theta)$ satisfies~\eqref{eq:discequation},~$i=1,\dots,N_0$. This implies that the reduced objective functional
\begin{align*}
\mathcal{J}(\theta) \coloneqq j(y_1(\theta), \dots, y_{N_0}(\theta))
\end{align*}
is smooth around~$\overline{\theta}$. Using adjoint calculus its gradient is given by
\begin{align*}
\partial\mathcal{J}(\theta)
=\frac{1}{N_0}\sum^{N_0}_{i=1} \int^T_0 \left\lbrack \partial_\theta
\mathcal{K}^{(\theta)}_M(P_{\clU_M} y_i(t))^*(p_i(t)
+\beta \mathcal{K}^{(\theta)}_M(P_{\clU_M} y_i(t)))
\right\rbrack\rmd t+ \partial \mathcal{G}(\theta)+\partial\mathcal{G}^{N_1}_\gamma(\theta)
\end{align*}
where the function~$p_i \in H^1((0,T),V_h)$ satisfies~$p_i(T)=0$ and
\begin{align*}
\dot p_i -(A^*_h+A^*_{\rm rc,h} +\partial\clN_h(y_i)^*
- P_{\clU_M}\partial_y \mathcal{K}^\theta_M(P_{\clU_M} y_i)^*)p_i=-y_i-\beta  P_{\clU_M}\partial_y \mathcal{K}^\theta_M(P_{\clU_M} y_i)^* \mathcal{K}^\theta_M(P_{\clU_M} y_i)
\end{align*}
for all~$i=1,\dots,N_0$.

\begin{remark}
Note that the monotonicity enhancing penalty term~$\mathcal{G}^{N_1}_{\gamma}$ in~\eqref{def:discproblem} can be interpreted as a quadrature of
\begin{align*}
\mathcal{G}_\gamma (\theta)\coloneqq  \frac{\gamma}{1+\varepsilon_1}
\int_{\overline{\clB}^H_{R+\varepsilon} \cap\, \clU_M}\left((\mathcal{K}^{(\theta)}_M(p), p)_H
+\overline{\lambda}|p|^2\right)^{1+\varepsilon_1}_+ \,\mathrm{d} \mathcal{L}(p)
\end{align*}
where~$\mathcal{L}$ denotes the normalized Lebesgue measure on~$\overline{\clB}^H_{R+\varepsilon} \cap \clU_M$.
This corresponds to a Moreau--Yosida regularization of the monotonicity constraint in problem~\eqref{def:learningprob}.
Concerning the practical implementation of~\eqref{def:discproblem},
we made good experience with choosing a large number~$N_1$ of randomly sampled functions~$\{p_j\}^{N_1}_{j=1}$. On the other hand we deliberately   chose a significantly smaller number~$N_0$ of  initial conditions~$\{y^i_0\}^{N_1}_{i=1}$ in the training set. In Section~\ref{S:simul}, e.g., we rely on the first few leading, unstable, eigenvectors of the diffusion operator in~\eqref{sys-y-intro-K} which yields satisfactory results. This discrepancy between~$N_0$ and~$N_1$ is mainly motivated by two observations. First, enforcing the monotonicity constraint in a large number of points~$\{p_j\}^{N_1}_{j=1}$ enhances the stabilizing properties of~$\mathcal{K}^{(\theta)}_M$ and eventually ensures the stabilization of initial conditions outside of the training set. Second, computing those parts of the gradient~$\mathcal{J}(\theta)$ which depend on the training set requires $N_0$ solves of the state and adjoint equation, respectively. This totals~$2 N_0$ PDE solves for one gradient evaluation. In
contrast, the gradient of the penalty term is given by
\begin{align*}
\partial \mathcal{G}^{N_1}_\gamma(\theta)= \frac{\gamma}{N_1} \sum^{N_1}_{j=1} \left((\mathcal{K}^{(\theta)}_M(p_j), p_j)_H
+\overline{\lambda}|p_j|^2\right)^{\varepsilon_1}_+ \, \partial_\theta \mathcal{K}^{(\theta)}_M(p_j)^* p_j,
\end{align*}
that is, it can be efficiently computed if the evaluation of~$\mathcal{K}^{(\theta)}_M$ and its derivatives is cheap (which is  the case, e.g., for realizations of neural networks).
\end{remark}

 \section{Example of application}\label{S:parabolic-OK}
We show that the parabolic coupled system~\eqref{sys-y-parab-intro},
evolving in spatial rectangular domains
\begin{equation}\label{Omegaxd}
\Omega=\Omega^\times=(0,L_1)\times (0,L_2)\times \cdots\times (0,L_d)\subset\bbR^d,\qquad d\in\{1,2,3\},
\end{equation}
is stable for large enough~$M$ and~$\lambda$, and for suitable chosen sets of actuators. Here~$M$ and~$\lambda$ (may) depend on the norm of the initial condition. The same arguments
can be extended to parabolic equations evolving
in  general  convex polygonal domains.

It is enough to show that our Assumptions~\ref{A:A0sp}--\ref{A:monotone} are satisfied.
Assumptions~\ref{A:A0sp}--\ref{A:A1} are satisfied with
$A$ and~$A_{\rm rc}$ as in Section~\ref{sS:illust-parabolic},  see~\cite[Sect.~5]{PhanRod18-mcss}.

Assumption~\ref{A:poincare} is satisfied for a suitable placement of the actuators, where as actuators we take  indicator functions~$\Phi_{\omega_{j}^M}=1_{\omega_{j}^M}$ of rectangular subdomains~$\omega_{j}^M$.
Figure~\ref{fig.suppActSens}
illustrates the actuators regions for a planar rectangle~$\Omega^\times\in\bbR^2$,
where the number of actuators is given by~$M_\sigma=\sigma(M)=M^2$, $M\in\bbN_0$.
We take analogous regions in other dimensions, that is for rectangular domains~$\Omega^\times\subset\bbR^d$ as in~\eqref{Omegaxd}, where we will have $M_\sigma=M^d$ actuators.

We denote the subrectagles for each ~$M\in\bbN_0$ by~$\omega^M_j$. Hence, our set of actuators is
\[
{U_{M}}=\{\indf_{\omega^M_j}\mid 1\le j\le M_\sigma\}\subset H=L^2(\Omega).
\]


\setlength{\unitlength}{.0018\textwidth}
\newsavebox{\Rectfw}%
\savebox{\Rectfw}(0,0){%
\linethickness{2pt}
{\color{black}\polygon(0,0)(120,0)(120,80)(0,80)(0,0)}%
}%

\newsavebox{\RectRef}%
\savebox{\RectRef}(0,0){%
\linethickness{1.5pt}
{\color{lightgray}\polygon*(40,30)(80,30)(80,50)(40,50)(40,30)}%
}%

 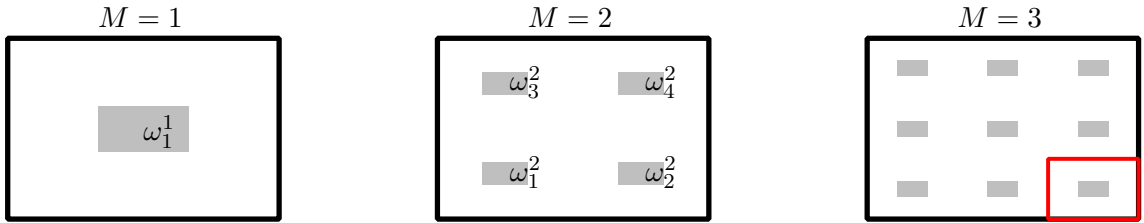
\begin{figure}[h!]
\begin{center}
\begin{picture}(500,100)

 \put(0,0){\usebox{\Rectfw}}
\put(0,0){\usebox{\RectRef}}
  \put(190,0){\usebox{\Rectfw}}
   \put(190,0){\scalebox{.5}[.5]{\usebox{\RectRef}}}
    \put(250,0){\scalebox{.5}[.5]{\usebox{\RectRef}}}
   \put(190,40){\scalebox{.5}[.5]{\usebox{\RectRef}}}
  \put(250,40){\scalebox{.5}[.5]{\usebox{\RectRef}}}
%
 \put(380,0){\usebox{\Rectfw}}
  \put(380,0){\scalebox{.333333}[.333333]{\usebox{\RectRef}}}
 \put(420,0){\scalebox{.333333}[.333333]{\usebox{\RectRef}}}
 \put(460,0){\scalebox{.333333}[.333333]{\usebox{\RectRef}}}
  \put(380,26.66666){\scalebox{.333333}[.333333]{\usebox{\RectRef}}}
 \put(420,26.66666){\scalebox{.333333}[.333333]{\usebox{\RectRef}}}
 \put(460,26.66666){\scalebox{.333333}[.333333]{\usebox{\RectRef}}}
  \put(380,53.33333){\scalebox{.333333}[.333333]{\usebox{\RectRef}}}
 \put(420,53.33333){\scalebox{.333333}[.333333]{\usebox{\RectRef}}}
 \put(460,53.33333){\scalebox{.333333}[.333333]{\usebox{\RectRef}}}

 \put(40,85){$M=1$}
 \put(230,85){$M=2$}
 \put(420,85){$M=3$}
\put(60,35){$\omega_1^1$}
\put(222,17){$\omega_1^2$}
\put(282,17){$\omega_2^2$}
\put(222,57){$\omega_3^2$}
\put(282,57){$\omega_4^2$}

\linethickness{1.5pt}
{\color{red}\polygon(460,0)(500,0)(500,26.66666)(460,26.66666)(460,0)}
\end{picture}
\end{center}
 \caption{Supports of actuators in the rectangle~$\Omega^\times\subset\bbR^2$.} \label{fig.suppActSens}
 \end{figure}

To construct the explicit feedback input, we need  the auxiliary set~$\widetilde\clU_M$.
Observe that, for a suitable lower bound tuple $((p_1)_{j}^M,(p_2)_{j}^M,\dots,,(p_d)_{j}^M)$ we have
\[
 \omega_j^M=((p_1)_{j}^M,(p_1)_{j}^M+\tfrac{l_1}{M})\times((p_2)_{j}^M,(p_2)_{j}^M+\tfrac{l_2}{M})
 \times\dots\times ((p_d)_{j}^M,(p_d)_{j}^M+\tfrac{l_d}{M}),\quad 1\le j\le M_\sigma,
\]
for the interior subrectangles as in Figure~\ref{fig.suppActSens}. Where essentially
we partition the rectangle into $M_\sigma$ similar (rescaled) rectangles and put one
(rescaled) actuator in each subrectange. See one of these copies highlighted in
Figure~\ref{fig.suppActSens}, at the right-bottom corner of the case $(M,d)=(3,2)$.
More details can be found in~\cite[Sect.~4]{Rod21-sicon}.
Above~$p_j^M=((p_1)_{j}^M,(p_2)_{j}^M,\dots,(p_d)_{j}^M)\in\bbR^d$
is the lower vertex of~$\omega_{j}^M$, and $\frac{l}{M}\in\bbR_0^d$ is the vector of sides length.\black

\begin{remark}\label{R:volAct}
By construction, in Figure~\ref{fig.suppActSens}, the total volume (area) covered by the actuators is independent of~$M$.
\end{remark}

Assumption~\ref{A:monotone} will be satisfied by~$\clK_M^{\lambda,\rho}$ as we have seen in Section~\ref{sS:RmksExplicitFeed}.

Therefore, it remains to show that the nonlinearity~$\clN(t,y)=-\norm{y}{\bbR}y$ satisfies Assumption~\ref{A:NN}, for~$d\in\{1,2,3\}$.  Observe that,   for the more general nonlinearity
\[
\clN_r(t,y)=-\norm{y}{\bbR}^{r-1}y,\qquad 1<r\le3,
\]
for $y\in V\subseteq H^1(\Omega)\xhookrightarrow{} L^6(\Omega)$, we obtain
\begin{align*}
&\norm{\clN_r(t,y_1)-\clN_r(t,y_2)}{V'}=\norm{\norm{y_1}{\bbR}^{r-1}y_1 -\norm{y_2}{\bbR}^{r-1}y_2}{V'}=\sup_{v\in V\setminus\{0\}}\frac{\langle\norm{y_1}{\bbR}^{r-1}y_1-\norm{y_2}{\bbR}^{r-1}y_2,v\rangle_{V',V}}{\norm{v}{V}}\\
&\hspace*{2em}
=\sup_{v\in V\setminus\{0\}}\frac{(\norm{y_1}{\bbR}^{r-1}y_1-\norm{y_2}{\bbR}^{r-1}y_2,v)_H}{\norm{v}{V}}\le \sup_{v\in V\setminus\{0\}}\frac{\norm{\norm{y_1}{\bbR}^{r-1}y_1-\norm{y_2}{\bbR}^{r-1}y_2}{L^\frac65}\norm{v}{L^6}}{\norm{v}{V}}\\
&\hspace*{2em}
\le C_1\norm{\norm{y_1}{\bbR}^{r-1}y_1-\norm{y_2}{\bbR}^{r-1}y_2}{L^\frac65}.
\end{align*}
Then, following arguments as in~\cite[Sect.~5.2.1]{Rod20} we obtain that
\begin{align*}
\norm{\clN_r(t,y_1)-\clN_r(t,y_2)}{V'}
&\le C_2\left(\norm{y_1}{L^{\frac{6r}5}}^{r-1}+\norm{y_2}{L^{\frac{6r}5}}^{r-1}\right)\norm{y_1-y_2}{L^{\frac{6r}5}}.
\end{align*}

Recall that for~$\Omega\in\bbR^d$ the Sobolev relations, see~\cite[Thm.~4.57]{DemengelDem12},
\begin{align}\label{sobol-rel}
2s<d\quad\mbox{and}\quad 1\le q\le\tfrac{2d}{d-2s}
\end{align}
give us the embedding~$H^s(\Omega)=W^{s,2}(\Omega)\xhookrightarrow{}L^q(\Omega)$.

Now, in the case~$r=2$, the relations~\eqref{sobol-rel} hold true for~$d\in\{1,2,3\}$ and~$(s,q)=(\frac14,\frac{12}5)$, hence~$H^\frac{1}{4}(\Omega)\xhookrightarrow{}L^\frac{12}{5}(\Omega)$,
and so we find, by an interpolation argument, that
\begin{align*}
\norm{\clN(t,y_1)-\clN(t,y_2)}{V'}&=
\norm{\clN_2(t,y_1)-\clN_2(t,y_2)}{V'}\\
&\le C_3\left(\norm{y_1}{H}^{\frac{3}{4}}\norm{y_1}{V}^{\frac{1}{4}}+\norm{y_2}{H}^{\frac{3}{4}}\norm{y_2}{V}^{\frac{1}{4}}\right)\norm{y_1-y_2}{H}^{\frac{3}{4}}\norm{y_1-y_2}{V}^{\frac{1}{4}},
\end{align*}
from which we can conclude that Assumption~\ref{A:NN} is satisfied by~$\clN=\clN_2$, with~$n=1$ and
\[
\delta_{11}=\zeta_{11}=\tfrac{3}{4},\qquad
\delta_{21}=\zeta_{21}=\tfrac{1}{4}.
\]
Notice that~$\delta_{11}+\delta_{21}\ge1$ and~$\delta_{21}+\zeta_{21}<1$.

\begin{remark}
For simplicity, above we have dealt with the cases~$d\in\{1,2,3\}$ simultaneously. However,
since the Sobolev relations~\eqref{sobol-rel} depend on~$d$ the nonlinearities which satisfy Assumption~\ref{A:NN} also depend on the dimension~$d$. For example, we do not know whether the nonlinearity~$\clN_3=-\norm{y}{\bbR}^2y$ satisfies Assumption~\ref{A:NN}
in the case~$d\in\{2,3\}$, but the arguments above show that it satisfies the same assumption for~$d=1$.\black Indeed, we can find that
\begin{align*}
\norm{\clN_3(t,y_1)-\clN_3(t,y_2)}{V'}
&\le C_1\left(\norm{y_1}{L^{\frac{18}5}}^2+\norm{y_2}{L^{\frac{18}5}}^2\right)\norm{y_1-y_2}{L^{\frac{18}5}},\\
H^\frac{2}{9}(\Omega)&\xhookrightarrow{}L^\frac{18}{5}(\Omega),\qquad d=1.
\end{align*}
Therefore we arrive at
\begin{align*}
\norm{\clN_3(t,y_1)-\clN_3(t,y_2)}{V'}
&\le C_2\left(\norm{y_1}{H}^{\frac{14}{9}}\norm{y_1}{V}^{\frac{4}{9}}+\norm{y_1}{H}^{\frac{14}{9}}\norm{y_1}{V}^{\frac{4}{9}}\right)\norm{y_1-y_2}{H}^{\frac{14}{9}}\norm{y_1-y_2}{V}^{\frac{4}{9}},\qquad d=1,
\end{align*}
from which we can conclude that Assumption~\ref{A:NN} is satisfied by~$\clN=\clN_3$, with~$n=1$ and
\[
\delta_{11}=\zeta_{11}=\tfrac{14}{9},\qquad
\delta_{21}=\zeta_{21}=\tfrac{4}{9}.
\]
Again, notice that~$\delta_{11}+\delta_{21}\ge1$ and~$\delta_{21}+\zeta_{21}<1$.
\end{remark}

\section{Numerical experiments}\label{S:simul}

This section serves to illustrate the main theoretical results  by numerical simulations.
For this purpose consider a particular instance of the parabolic system in
\eqref{sys-y-parab-intro},
\begin{align} \label{eq:numeq}
&\tfrac{\p}{\p t} y +(-0.1\Delta+\Id) y+ay +b\cdot\nabla y - \norm{y}{\bbR}y
 =\mathcal{K}_M ( P_{\clU_{M}} y),\quad \tfrac{\p}{\p\bfn}y\rest{\Gamma}=0,\quad
y(0)=y_0,
\end{align}
where the state~$y$ evolves in~$H=L^2(\Omega)$ on the unit square~$\Omega=(0,1)^2$.
The set of actuators~$\mathcal{U}_M$ is chosen as in Section~\ref{S:parabolic-OK}
and the coefficient functions~$a$ and~$b$ are defined individually for each numerical
example.

\begin{figure}[ht]
\centering
\subfigure
{\includegraphics[width=0.3\textwidth]{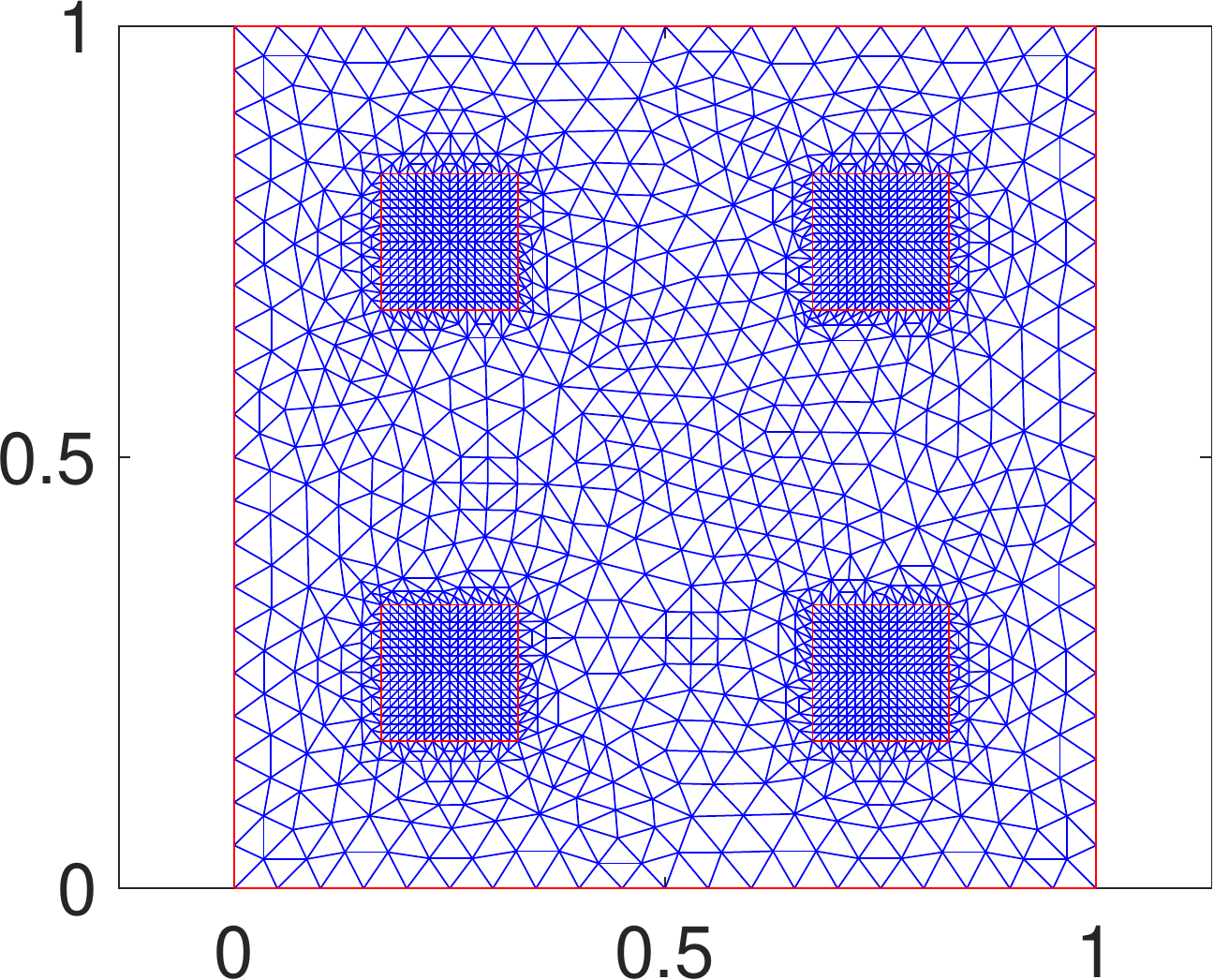}}
\subfigure
{\includegraphics[width=0.3\textwidth]{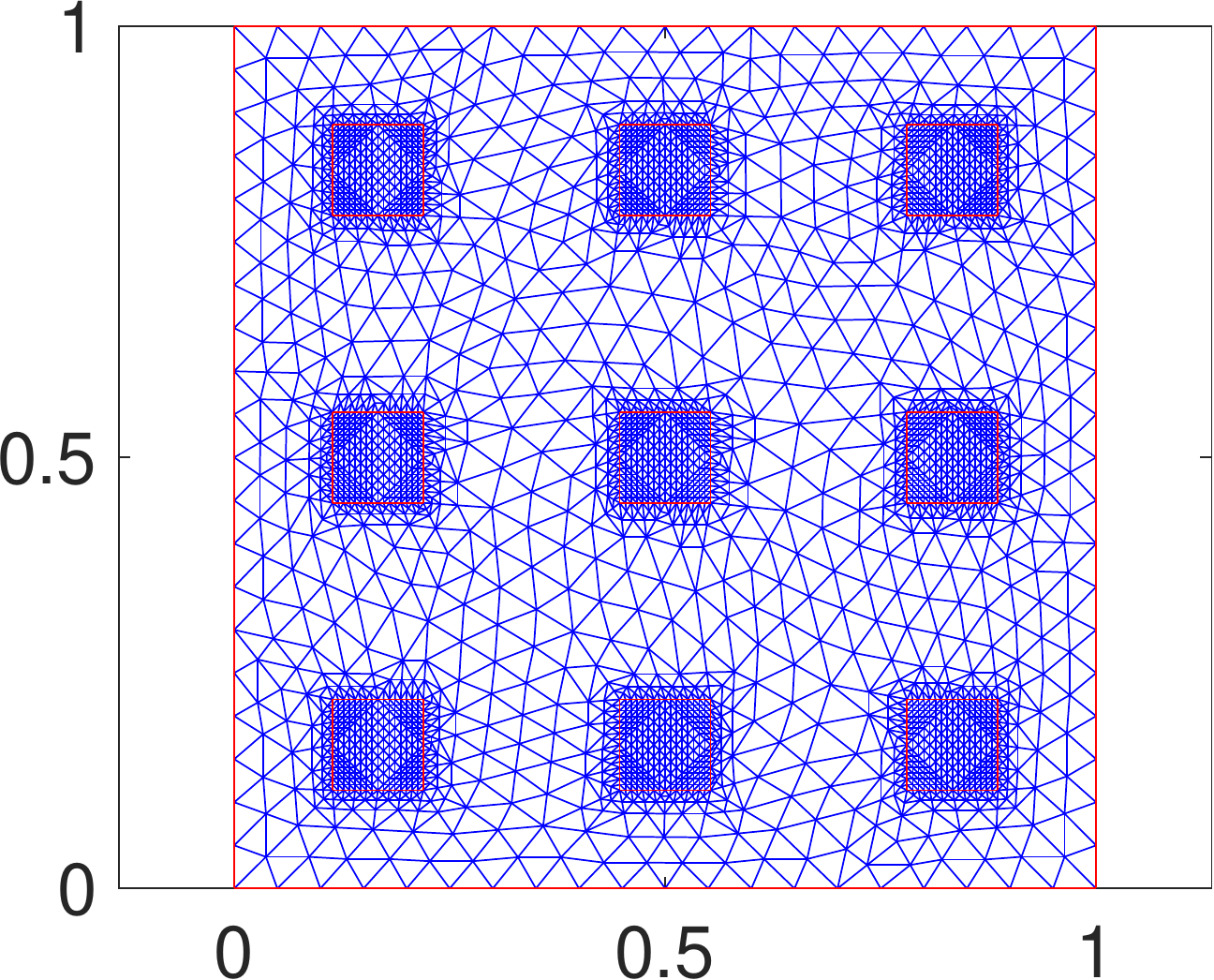}}
\subfigure
{\includegraphics[width=0.3\textwidth]{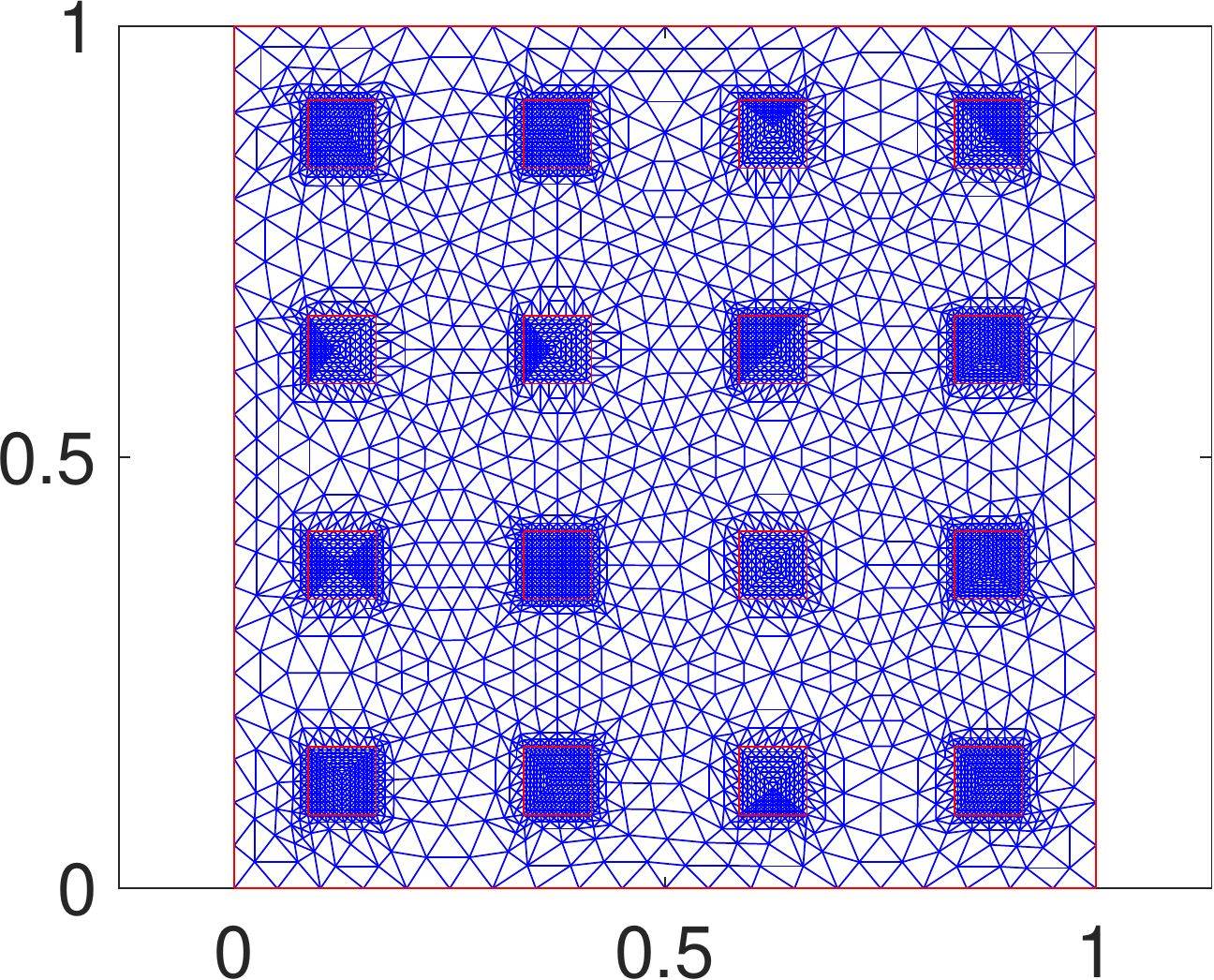}}
\caption{Actuators locations and triangulations of~$\Omega$}
\label{Fig:Mesh}
\end{figure}

The remainder of this section is split into two parts. The first one aims to confirm
the statement of the stabilization result in Corollary \ref{C:main-expli-1}.
For this purpose we consider feedback
controls of the form~$\mathcal{K}^\lambda_M({P}_{\mathcal{U}_M}y)=-\lambda P_{\mathcal{U}_M}y$,~$\lambda>0$,
and show that the closed-loop system is exponentially stable in the~$H$-norm provided the
number~$M_\sigma=\dim\clU_M$
of actuators and the scalar~$\lambda>0$ are both large enough.
In the second part we compare~$\mathcal{K}^\lambda_M({P}_{\mathcal{U}_M}y)$
with neural network induced feedback laws~$\mathcal{K}^{(\theta)}_M({P}_{\mathcal{U}_M}y)$
obtained from solving the learning problem.

For the numerical approximation of~\eqref{eq:numeq} in space we rely on a piecewise
linear finite element ansatz on locally refined triangulations of~$\Omega$ which
well resolve the support of the actuators. The meshes chosen for~$4,~9,~16$ actuators
are depicted in Figure~\ref{Fig:Mesh}. Concerning the temporal
discretization, a semi-implicit time-stepping scheme with stepsize~$k>0$ is applied.
More precisely, the symmetric linear term~$(-\nu \bigtriangleup +\mathbf{1} +a )y$
is treated by a Crank-Nicolson scheme and for the remaining terms
a first order Adams-Bashforth method is  used. All calculations were carried out in
MATLAB.

\subsection{Scaled orthogonal projection feedback laws}
In the following we choose the following parameters in \eqref{sys-y-parab-intro}:
\begin{align*}
 a \coloneqq -2+x_1 -\norm{\sin(t+x_1)}
 {\bbR},&\qquad b\coloneqq
 \begin{bmatrix}
  x_1+x_2\\
  \cos(t)x_1x_2
  \end{bmatrix}, \qquad y_0= \overline y_0\coloneqq\frac{1-2x_1x_2}{\norm{1-2x_1x_2}{H}}, \quad \nu=0.1.
\end{align*}


We briefly address the behavior of~\eqref{eq:numeq} if no control is applied i.e. for~$\lambda=0$. In this case, the norm of the solution~$y$ blows up in finite time, c.f. Figure~\ref{Fig:y_4and16lam00Fon01nIC11FTYOrth}. Note that the simulations were run for two different meshes namely those corresponding to~$4$ and~$16$ actuators, respectively. The computed results suggest that the blow-up time of the uncontrolled dynamics is independent of the spatial discretization.
\begin{figure}[ht]
\centering
\subfigure
{\includegraphics[width=0.45\textwidth,height=0.33\textwidth]{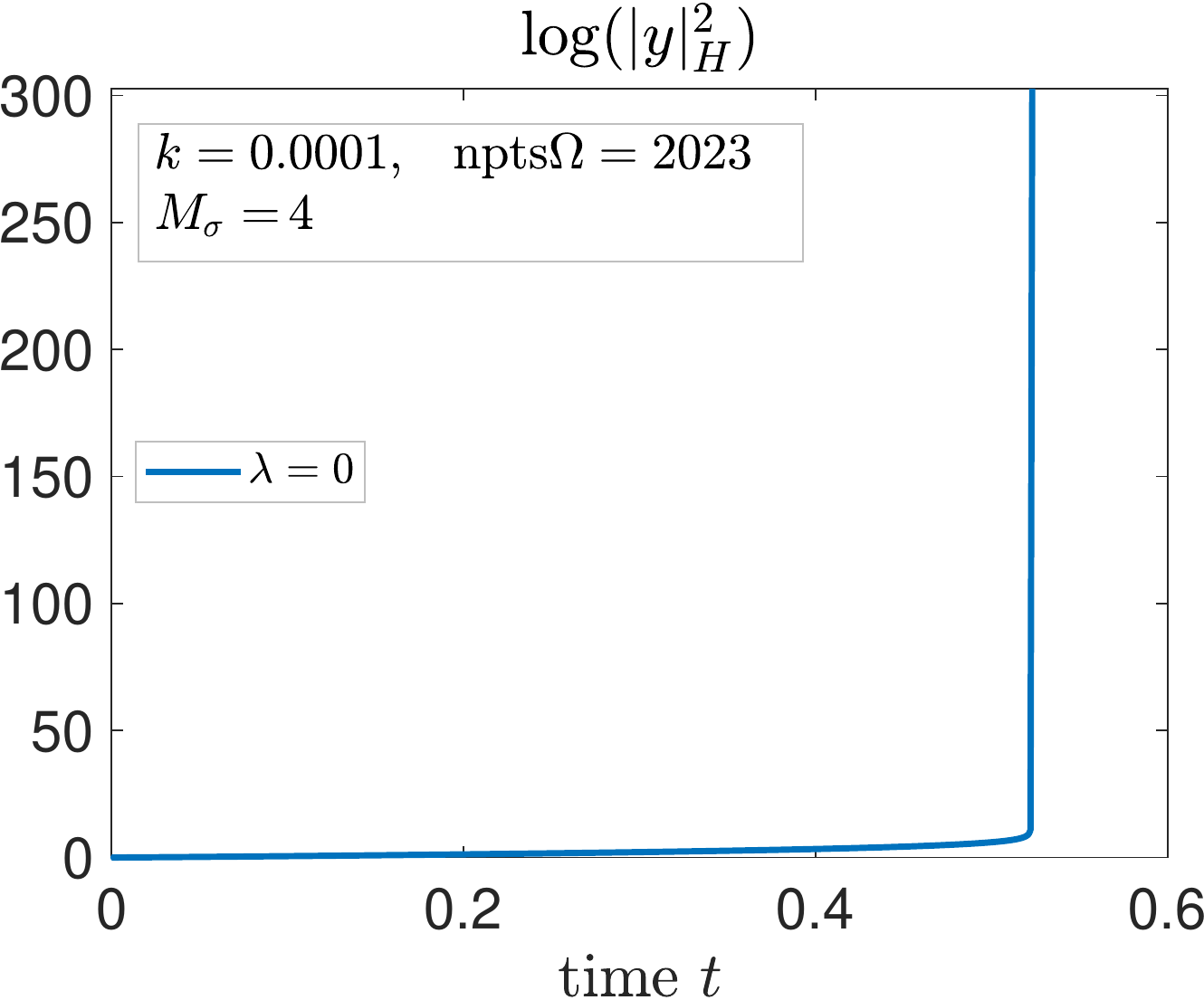}}
\qquad
\subfigure
{\includegraphics[width=0.45\textwidth,height=0.33\textwidth]{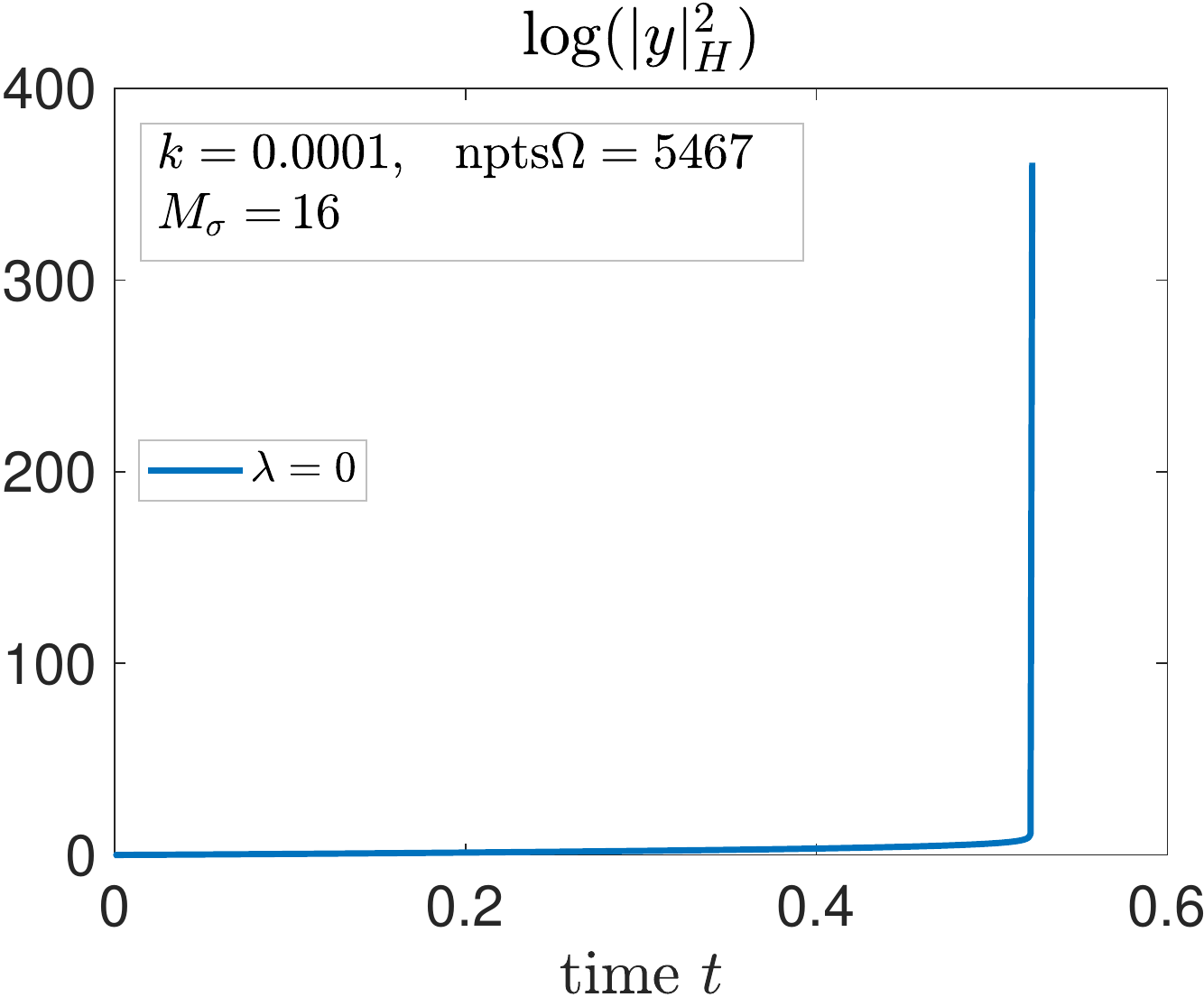}}
\caption{Free dynamics norm behavior.}
\label{Fig:y_4and16lam00Fon01nIC11FTYOrth}
\end{figure}

Now we fix the number of actuators~$M_\sigma=9$ and simulate~\eqref{eq:numeq} for
varying values of~$\lambda \geq 0$. The results are reported in
Figure~\ref{Fig:y_M9lam50to175Fon03nIC11FTYOrth}. Similar to the uncontrolled system,
we observe a finite time blow-up of the solution norm for~$\lambda=50,75,100$.
Note that the blow-up time increases with~$\lambda$.
Finally, for~$\lambda=125,150,175$, the system~\eqref{eq:numeq} is exponentially
stable and the norm of~$y$ is strictly decreasing. However we point out that the
stabilization rate~$\mu$ does not improve from~$\lambda=150$ to~$\lambda=175$.
This backs up the claim of Corollary \ref{C:main-expli-1}, which
states that both~$\lambda>0$~\emph{and}~$M\in \mathbb{N}$ have to be chosen
large enough in order to achieve a certain rate of stabilization.
\begin{figure}[ht]
\centering
{\includegraphics[width=0.45\textwidth,height=0.33\textwidth]{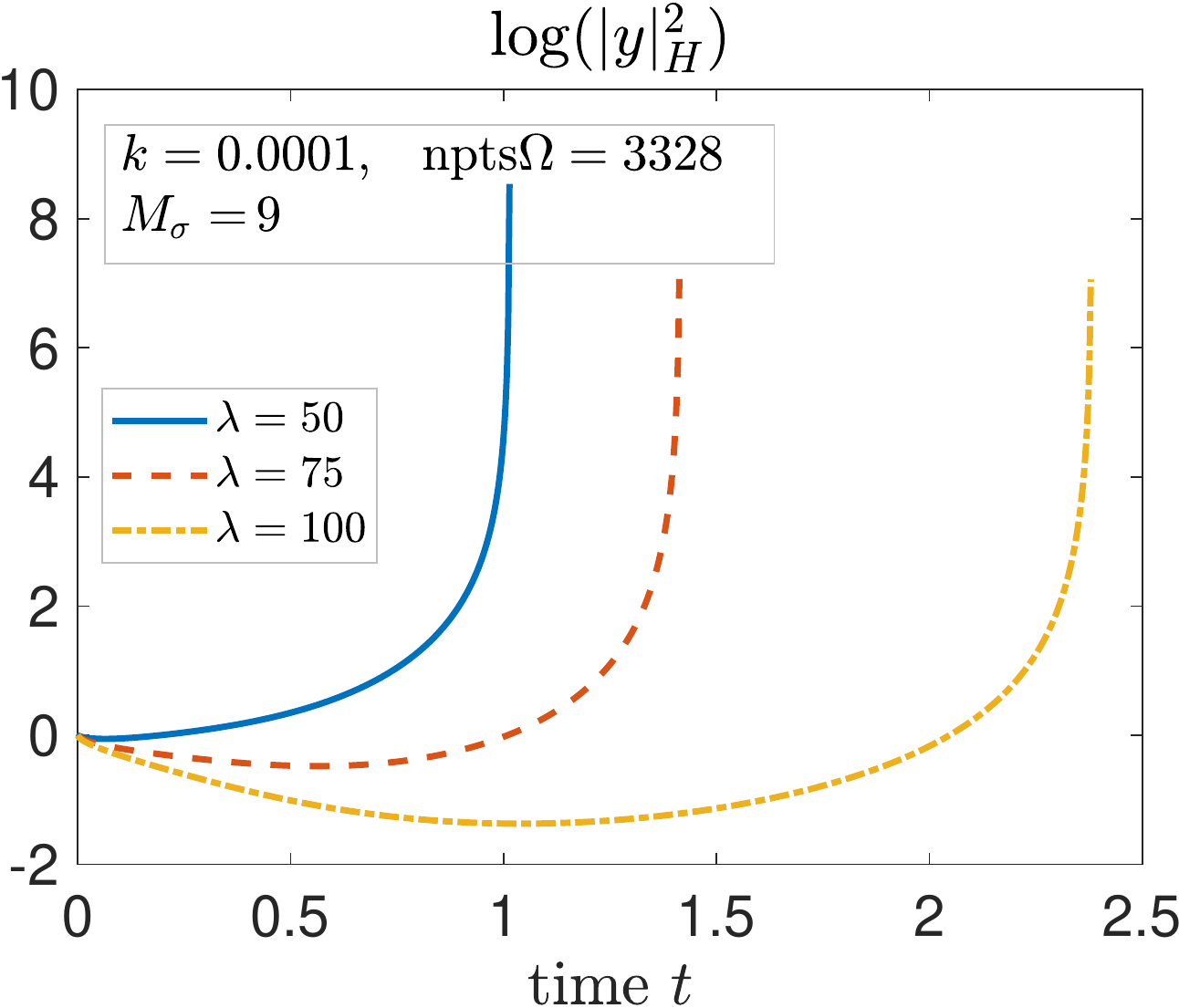}}
\qquad
\subfigure
{\includegraphics[width=0.45\textwidth,height=0.33\textwidth]{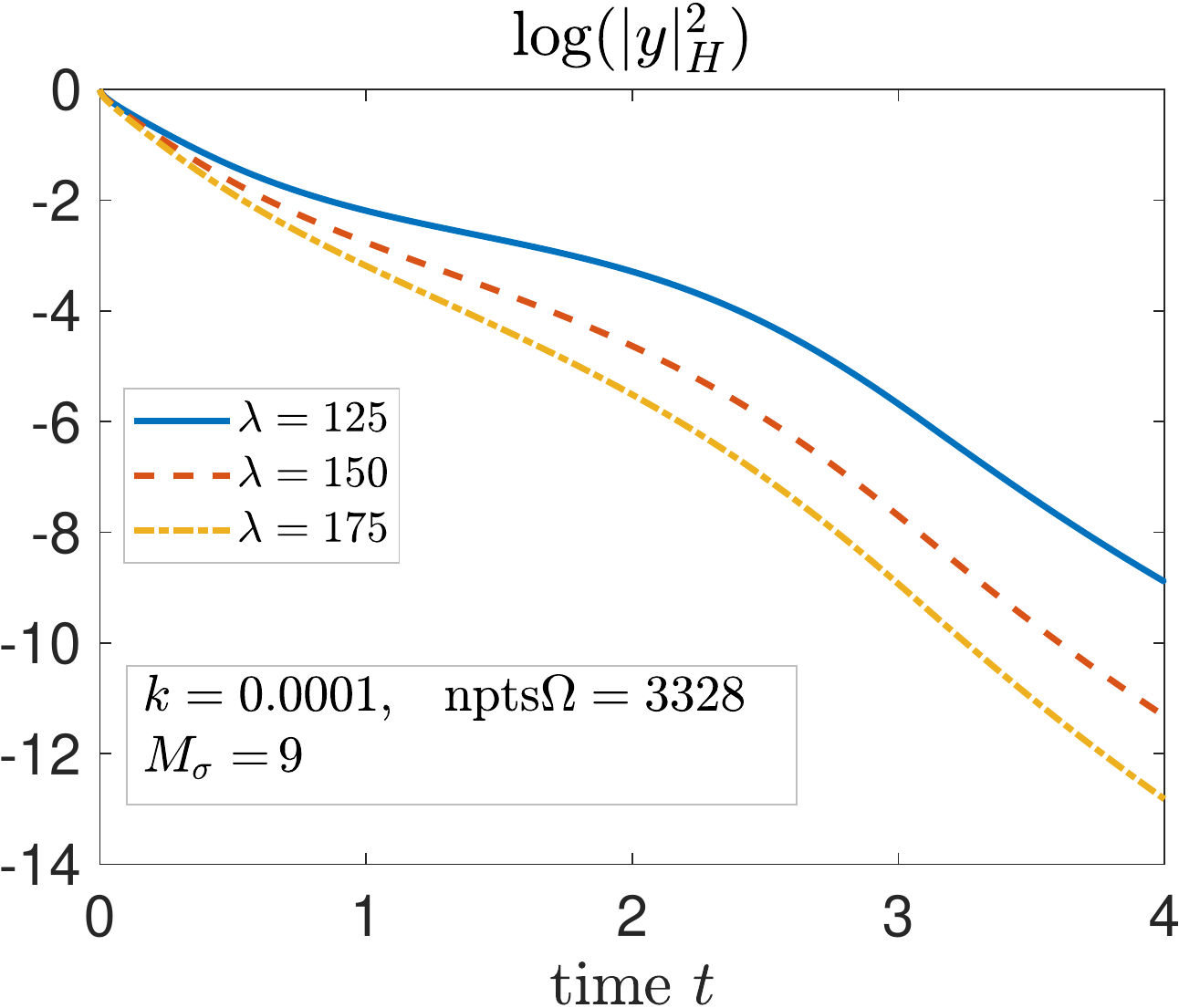}}
\caption{The case of~$9$ actuators.}
\label{Fig:y_M9lam50to175Fon03nIC11FTYOrth}
\end{figure}

This becomes even more evident if we simulate~\eqref{eq:numeq} for the same values of~$\lambda$ but
with~less/more actuators. In Figure~\ref{Fig:y_M4and16lam50to175Fon01-4nIC11FTYOrth} we\black see that~$M_\sigma=4$
actuators are not able to stabilize
the system, even for large~$\lambda$. Moreover, the respective blow up times of for each~$\lambda$ seem
to convergence towards a limit point around~$0.7$ as~$\lambda$ increases. This behavior suggests
that~$4$ actuators are not able to stabilize the system independently of the value of~$\lambda$.
On the contrary with~$M_\sigma=16$ actuators the system is stabilized for all considered values of~$\lambda$.
In particular note the improved rate of stabilization in comparison to the case of~$M_\sigma=9$ in Figure~\ref{Fig:y_M9lam50to175Fon03nIC11FTYOrth}.
\begin{figure}[ht]
\centering
\subfigure
{\includegraphics[width=0.45\textwidth,height=0.33\textwidth]{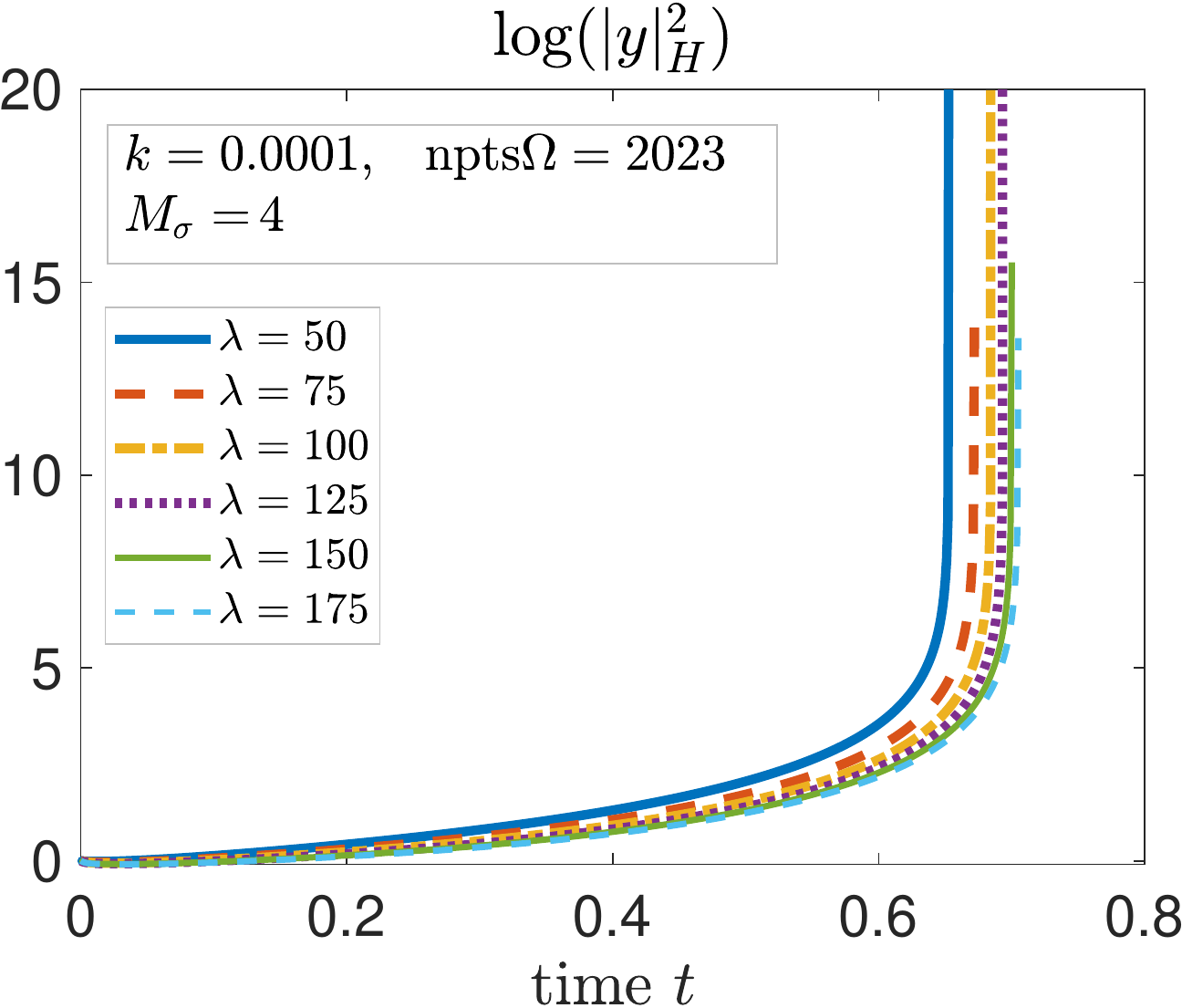}}
\qquad
\subfigure
{\includegraphics[width=0.45\textwidth,height=0.33\textwidth]{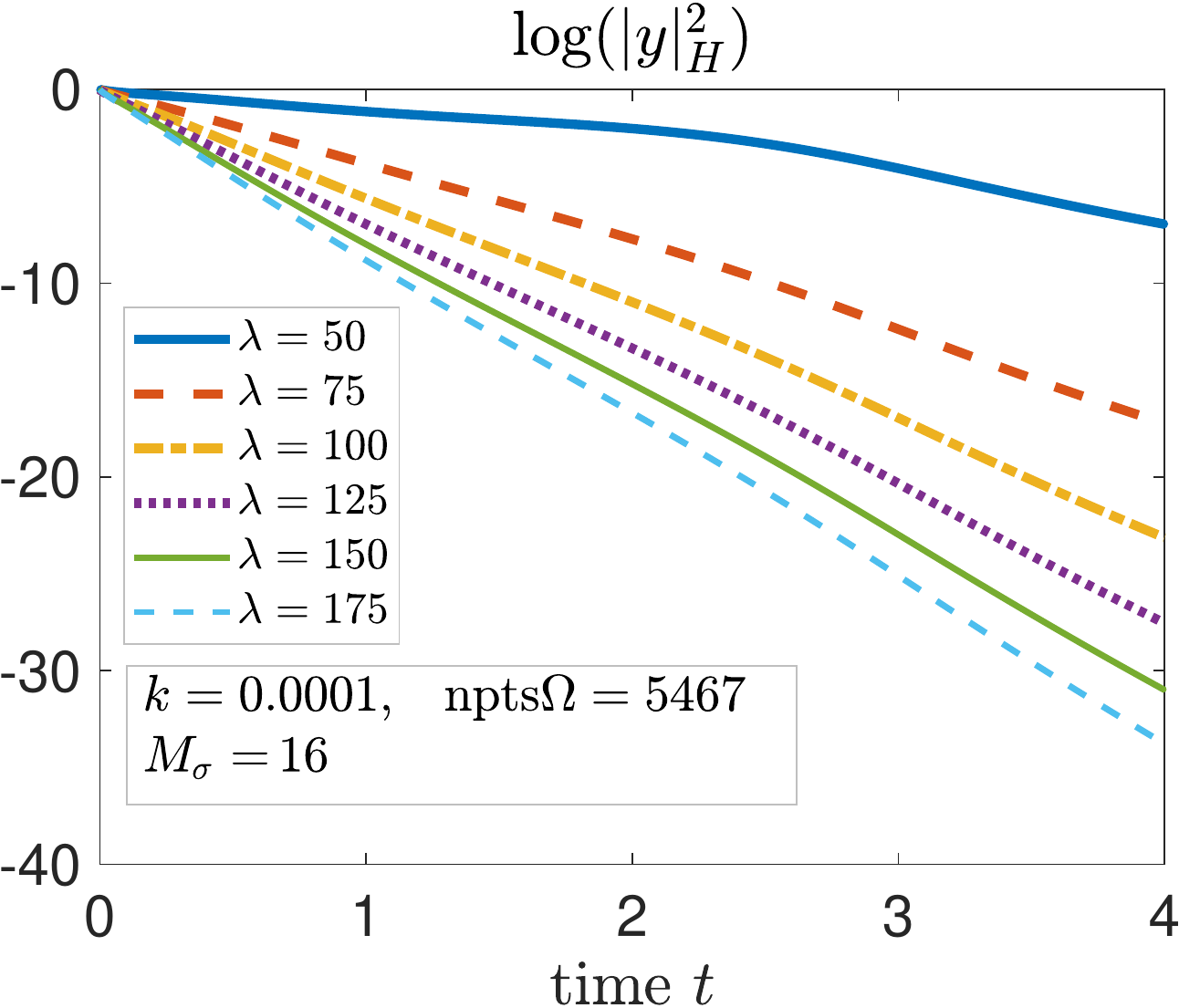}}
\caption{The case of~$4$ and~$16$ actuators.}
\label{Fig:y_M4and16lam50to175Fon01-4nIC11FTYOrth}
\end{figure}

Finally we repeat the previous simulations with~$M_\sigma=16$~actuators for the rescaled
initial condition~$y_0= 2 \bar{y}_0$.\black The results can be found in Figure
\ref{Fig:y_M16lam50to175Fon04nIC11FTYOrth}. After increasing  the norm of the initial
condition we note that~$16$ actuators are no longer able to stabilize the system
for~$\lambda=50,75$. For the remaining larger values of~$\lambda$, the  system is still
exponentially stable, however the rate of stabilization is smaller than in
Figure~\ref{Fig:y_M4and16lam50to175Fon01-4nIC11FTYOrth}.
\begin{figure}[ht]
\centering
\subfigure
{\includegraphics[width=0.45\textwidth,height=0.33\textwidth]{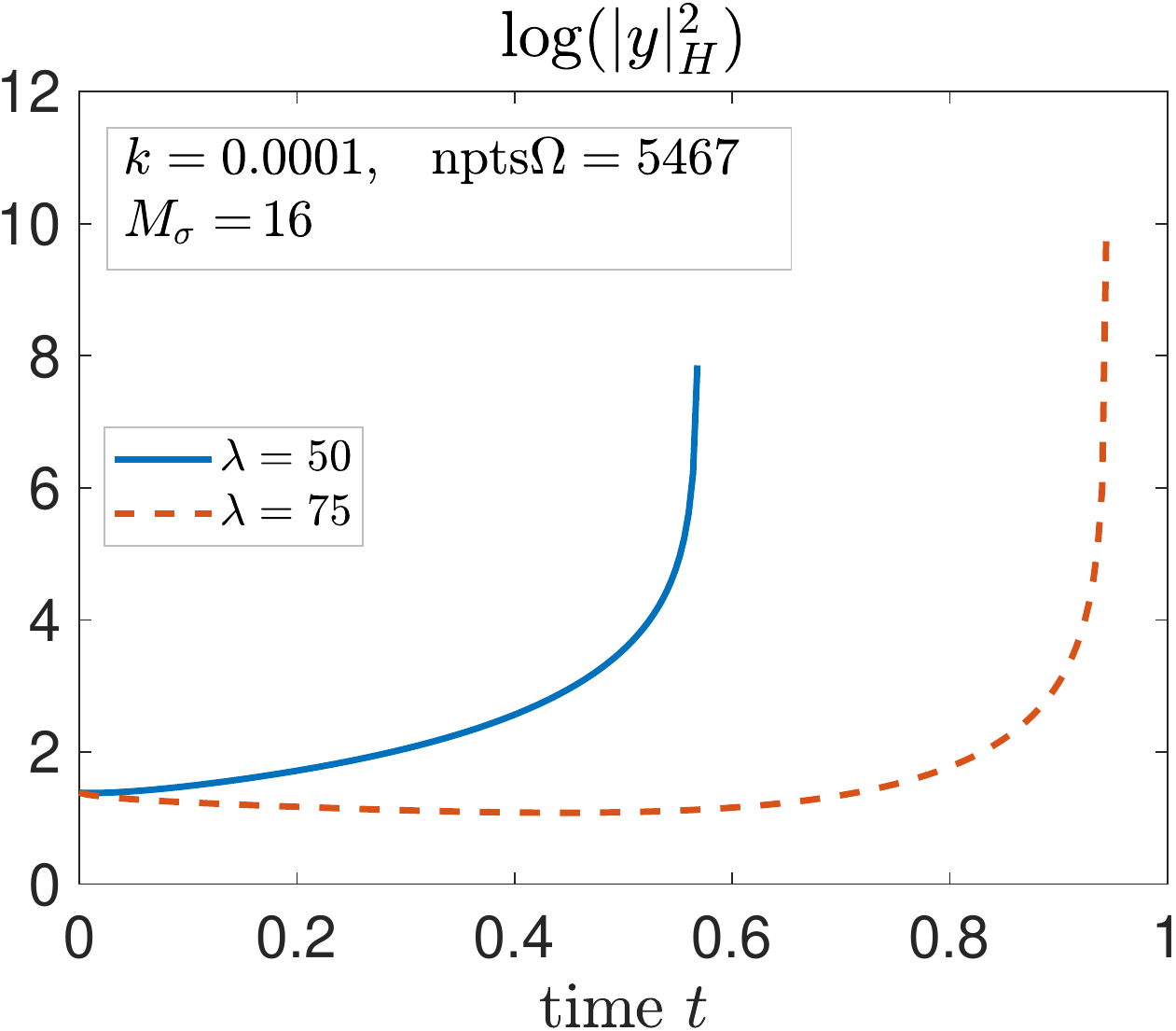}}
\qquad
\subfigure
{\includegraphics[width=0.45\textwidth,height=0.33\textwidth]{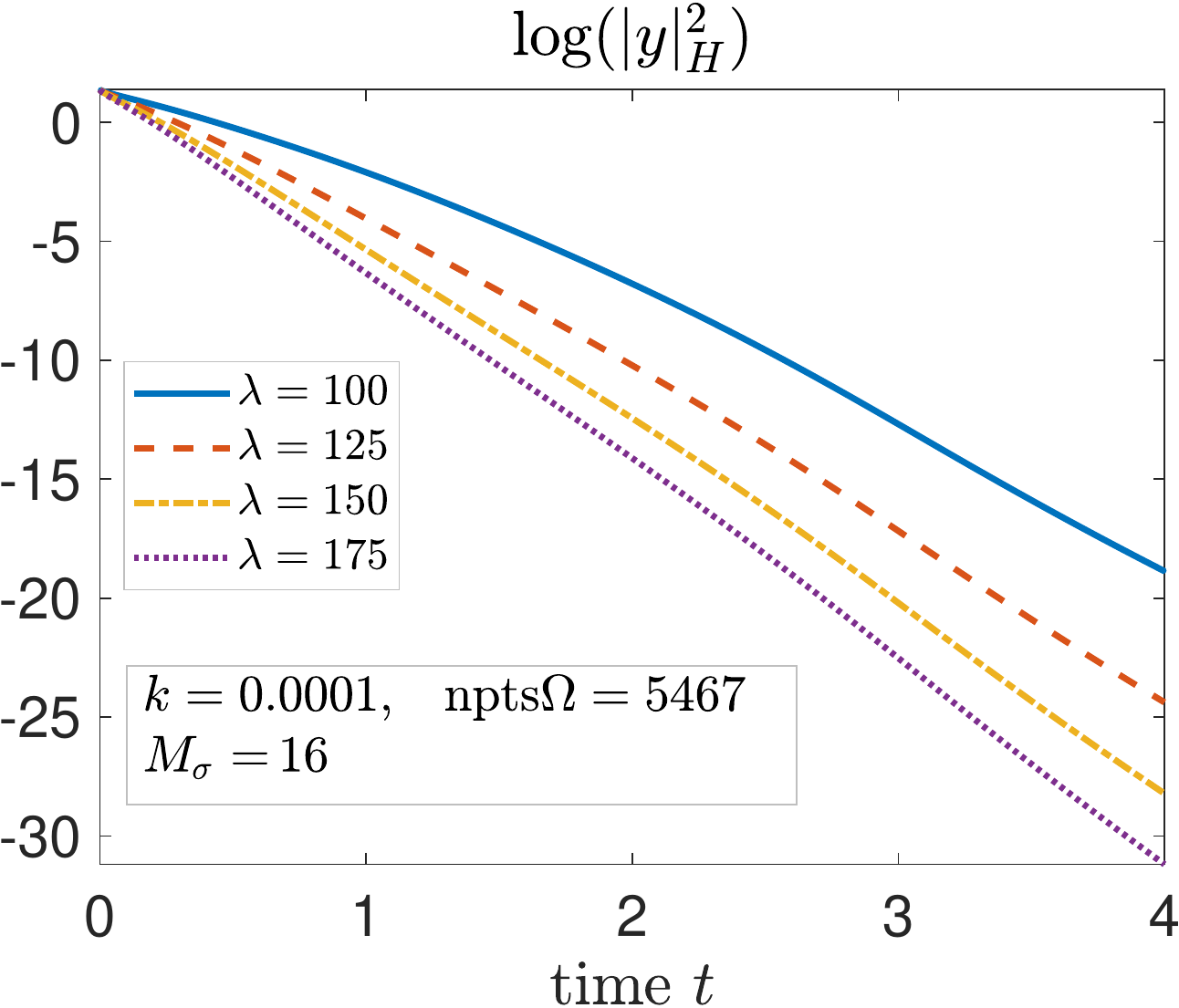}}
\caption{Larger initial condition, $y_0=2\overline y_0$.}
\label{Fig:y_M16lam50to175Fon04nIC11FTYOrth}
\end{figure}

In conclusion, Figures~\ref{Fig:y_4and16lam00Fon01nIC11FTYOrth}--\ref{Fig:y_M16lam50to175Fon04nIC11FTYOrth} backup the findings of Theorem~\ref{T:main}. Indeed,
they confirm the stabilizing property of the orthogonal projection feedback~$\mathcal{K}^{\lambda}_M(P_{\mathcal{U}_M}y)=-\lambda P_{\mathcal{U}_M}y $
provided that both, the number of actuators~$M_\sigma=\dim\clU_M$
and the monotonicity parameter~$\lambda>0$ are chosen large enough. Moreover Figure~\ref{Fig:y_M16lam50to175Fon04nIC11FTYOrth} shows that for larger initial conditions we (may) also need to
increase~$M_\sigma=\dim\clU_M$~and/or~$\lambda>0$ which highlights the semiglobal nature of the considered feedback.

As a last remark to this first part, note that we do not report here on simulations
for the oblique projection feedback (c.f. Corollary~\ref{C:main-expli-1}), since
its qualitative behavior parallels that of the orthogonal
projection. For simulations concerning the linear case we refer the reader
to~\cite{Rod_pp20-CL}.

\subsection{Learning based feedback stabilization}
Next we compare the orthogonal projection
feedback~$\mathcal{K}^\lambda_{M}(\mathcal{P}_{\mathcal{U}_M} \Bigcdot)$ for
fixed~$\lambda=100$ and~$M_\sigma=9$ with neural network feedback
laws~$\mathcal{K}^{(\theta)}_{M}(\mathcal{P}_{\mathcal{U}_M} \Bigcdot)$
obtained from solving~\eqref{def:discproblem} with~$\bar{\lambda}=100$.
In this section the parameters in system~\eqref{eq:numeq} are chosen to be
\begin{align*}
 a\coloneqq-2+x_1 -\norm{\sin(x_1)}{\bbR},&\qquad b\coloneqq\begin{bmatrix}
                                                           x_1+x_2\\
                                                           x_1x_2
                                                          \end{bmatrix}, \qquad \nu=0.1,
\end{align*}
and the initial state~$y_0$ will be specified below. Moreover we denote by~$\Delta_h$,~$\nabla_h$
and~$\mathcal{N}_h$ the finite element approximations of the diffusion and reaction
operators as well as of the nonlinearity in~\eqref{eq:numeq}.
For the definitions and notation concerning neural networks used in the following
we refer to Example~\ref{examplsubsets}. We choose feedback
laws~$\mathcal{K}^{(\theta)}_M$ induced by
realizations~$K^{(\theta)}_M$, c.f.~\eqref{def:realization}, of neural
networks~$\theta$ with~$L=3$ layers and
architecture~$\operatorname{arch}(\mathcal{R})=(9,15,15,9)$.
The activation function is chosen as the softmax~$\chi(x)=\log(1+\exp(x))$.
Moreover we fix the finite time horizon~$T=5$, the cost parameter~$\beta=1$, the timestep~$k=0.001$
and~$\mathcal{G}(\theta)=0$. We point out that~$T$,~$L$
and~$\operatorname{arch}(\mathcal{R})$ are chosen based on numerical experience.
A systematic approach on how to choose the best neural network architecture
for the stabilization problem at hand is beyond of the scope of this manuscript.
Finally we address the choice of the ``training set" in~\eqref{def:discproblem}. For this purpose denote
by~$\lambda_j \in \mathbb{R}$ the eigenvalues of~$(-\nu\Delta_h+\mathbf{1})$
ordered by increasing magnitude and let~$v_j$ be the associated eigenfunctions.
Now define the initial conditions~$y^i_0$,~$i=1,\dots,N_0$,~$N_0=10$, in the training
set by~$y^j_0=v_j$ and~$y^{j+5}_0=-v_j$,~$j=1,\dots,5$. The learning
grid~$\{u_i\}^{N_1}_{i=1}$,~$N_1=50000$, is obtained by uniformly sampling from the set
\begin{align*}
\mathbf{U}_M:= \left \{\,u \in L^2(\Omega) \;|\;u \in  \mathcal{U}_M,~|u|_H\leq 1.1\,\right\}.
\end{align*}
The parameter~$\varepsilon_1$ in the definition of~$\mathcal{G}^{N_1}_\gamma$ is chosen as~$\varepsilon_1=10^{-4}$.
A gradient descent method is applied to all arising learning problems, that is, starting from~$\theta_0 \in \mathcal{R}$ we iterate~$\theta_{k+1}=\theta_k-s_k \, \partial \mathcal{J}(\theta_k)$ for some stepsize~$s_k \geq 0$.

\subsubsection{Computing neural network feedback laws}
We now determine two neural network feedbacks, one for~$\gamma=0$ (i.e.,
 monotonicity is not enforced) and one for~$\gamma=500$.  We stress that,
since the uncontrolled systems might blow-up in finite time, finding an initial
neural network~$\theta_0 \in \mathcal{R}$ is not straightforward. As a remedy we consider
the damped closed loop systems
\begin{align}\label{eq:damped}
&\hspace*{-.5em}\tfrac{\p}{\p t} y_i +(-0.1\Delta_h+\Id) y_i+ay_i +b\cdot\nabla_h y_i - \alpha\,\norm{y_i}{\bbR}y_i
 =\mathcal{K}^{(\theta)}_M ( P_{\clU_{M}} y_i),\quad\! \tfrac{\p}{\p\bfn}y_i\rest{1\Gamma}=0,\quad\!
y(0)=y^i_0,
\end{align}
for~$\alpha \in[0,1]$,~$i=1,\dots,10$. For~$\alpha=0$, the uncontrolled systems are
linear and  thus admit a solution on~$[0,T]$. In virtue of the implicit function
theorem, the same will be true for feedback laws~$\mathcal{K}^{(\theta)}_M$ with small
weights~$\theta$. Now we can solve~\eqref{def:discproblem} subject to~\eqref{eq:damped}
and~$\alpha=0$.\black
Then  path-following with respect to  the damping parameter is carried out, by
increasing~$\alpha$
and solving\eqref{def:discproblem} subjected to~\eqref{eq:damped}\black using the previous
solution as a starting point. This procedure is repeated until we arrive
at~$\alpha=1$.\black
A similar homotopy strategy is applied for the penalty parameter,
from~$\gamma=0$ to~$\gamma=500$.\black
\begin{remark}
We observe  that~$\mathcal{K}^{(\theta_0)}_M(P_{\mathcal{U}_M} \Bigcdot)
=-\lambda P_{\mathcal{U}_M}$
for~$\theta_0=(0,0,0,\cdots,0,-\lambda \Id,0)$.
This observation might suggest to use the orthogonal projection feedback as a
starting point for the learning problem. However, due to the definition
of~$K^{(\theta)}_M$ as a composition, we readily
verify that~$\partial_{W_{1,j}} J(\theta_0)=0$,~$\partial_{b_j} J(\theta_0)=0$,~$j
=1,\dots,L$, as well as~$\partial_{W_{2,j}} J(\theta_0)=0$,~$j=1,\dots,L-1$.
Hence we have~$\theta_k=(0,0,0,\cdots,0,W^k_{2,L},0)$ for some~$W^k_{2,L}\in \mathbb{R}^{M_{\sigma}\times M_\sigma}$, that is, all realizations would result in
linear mappings. Therefore we do not use this particular choice
of~$\theta_0$ as initialization for the algorithm.
\end{remark}

\subsubsection{Comparison for explicit initial conditions} \label{subsubsec:explicit}
To compare the neural network and orthogonal projection feedback laws we simulate the associated closed loop system for two initial conditions given by
\begin{align*}
\bar{y}^1_0\coloneqq \frac{1-2x_1x_2}{\norm{1-2x_1x_2}{H}},\qquad \bar{y}^2_0\coloneqq \frac{-\operatorname{sgn}(x_1-0.5)\,\operatorname{sgn}(x_2-0.5)}{\norm{\operatorname{sgn}(x_1-0.5)\,\operatorname{sgn}(x_2-0.5)}{H}}.
\end{align*}
Note that these initial conditions are neither contained in the training set
nor in its linear span.
The temporal evolution of the $H$-norm of the computed states and feedback
controls are plotted in Figures~\ref{Fig:NNfirstinitial} and~\ref{Fig:NNsecondinitial}.
Note the  logarithmic scale, which is used for the vertical axis, and the additional
zoomed plots showing the short-time behavior of the feedback controls.

\begin{figure}[ht]
\centering
\subfigure
{\includegraphics[width=0.48\textwidth,height=0.4\textwidth]{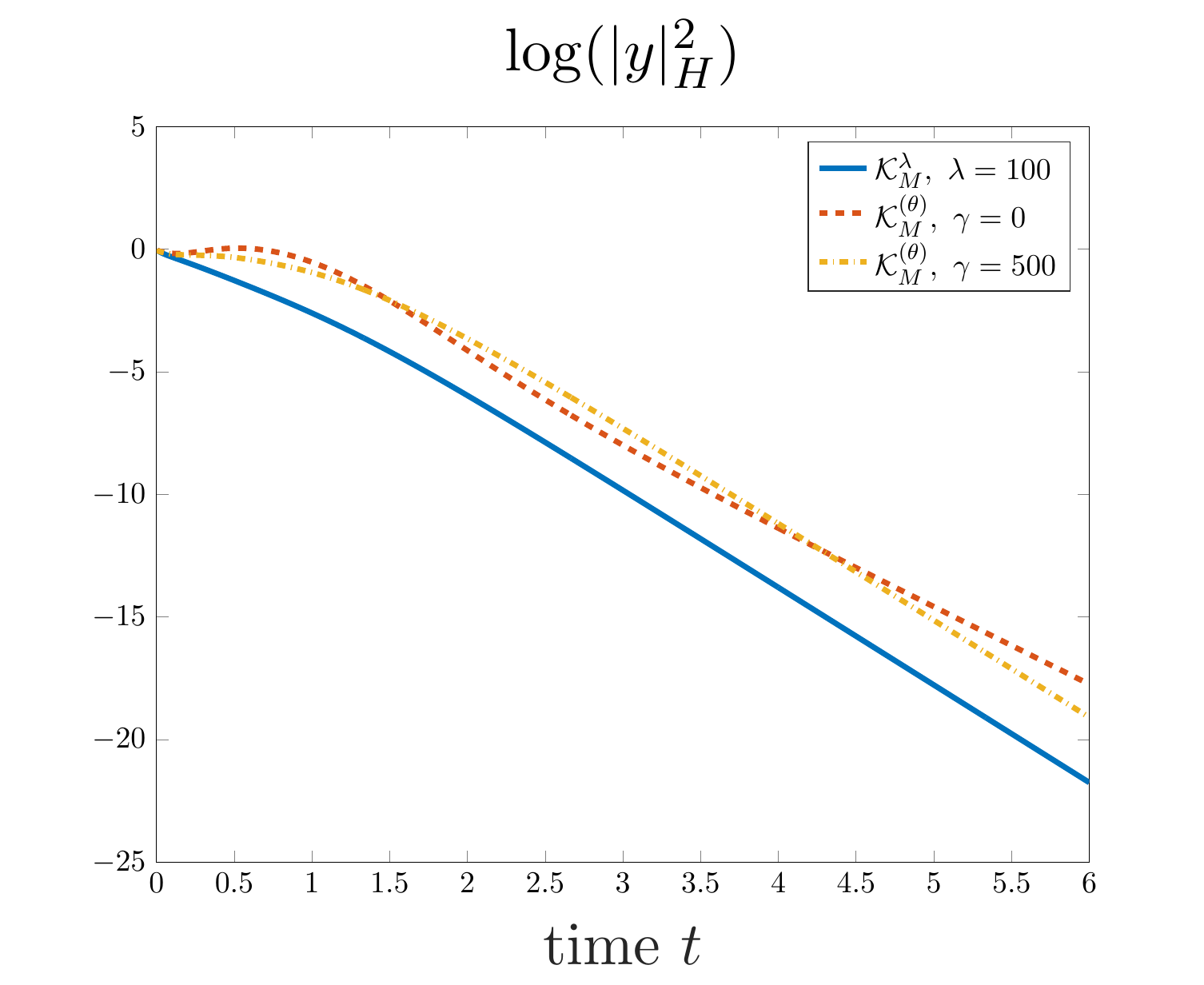}}
\subfigure
{\includegraphics[width=0.48\textwidth,height=0.4\textwidth]{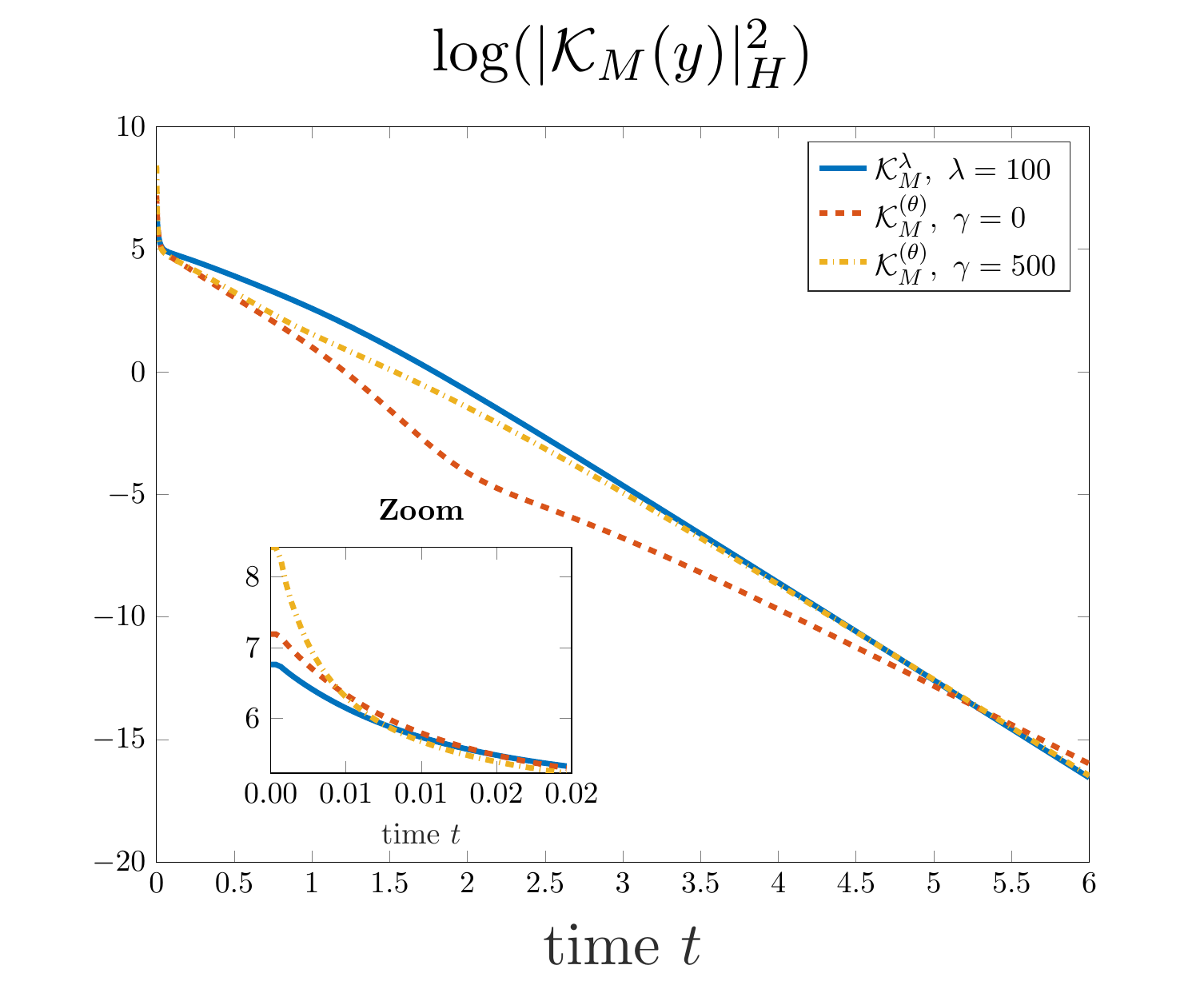}}
\caption{Evolution of state and control norms for~$y_0=\bar{y}^1_0$.}
\label{Fig:NNfirstinitial}
\end{figure}
\begin{figure}[ht]
\centering
\subfigure
{\includegraphics[width=0.41\textwidth,height=0.4\textwidth]{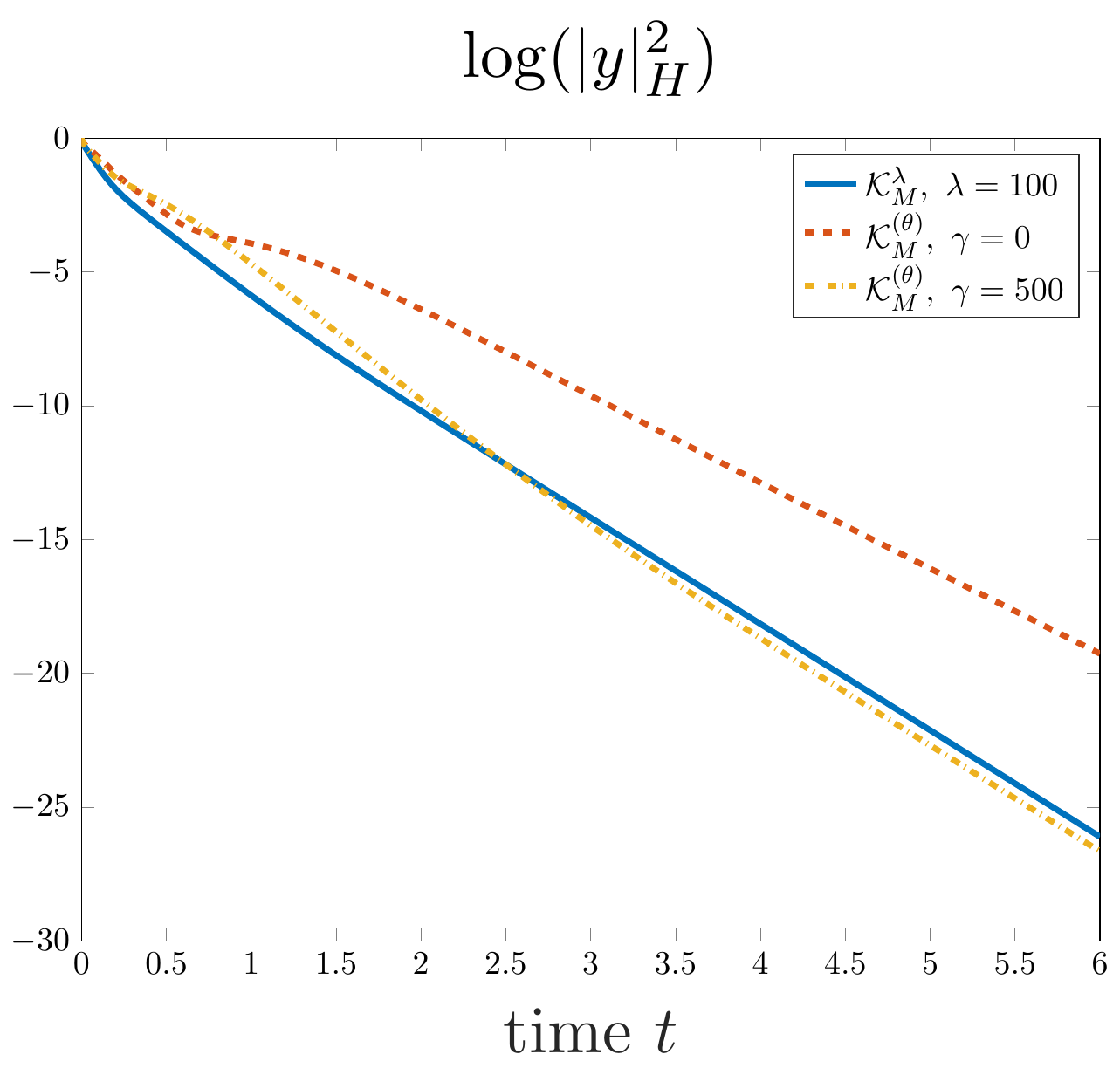}}
\qquad\quad
\subfigure
{\includegraphics[width=0.41\textwidth,height=0.4\textwidth]{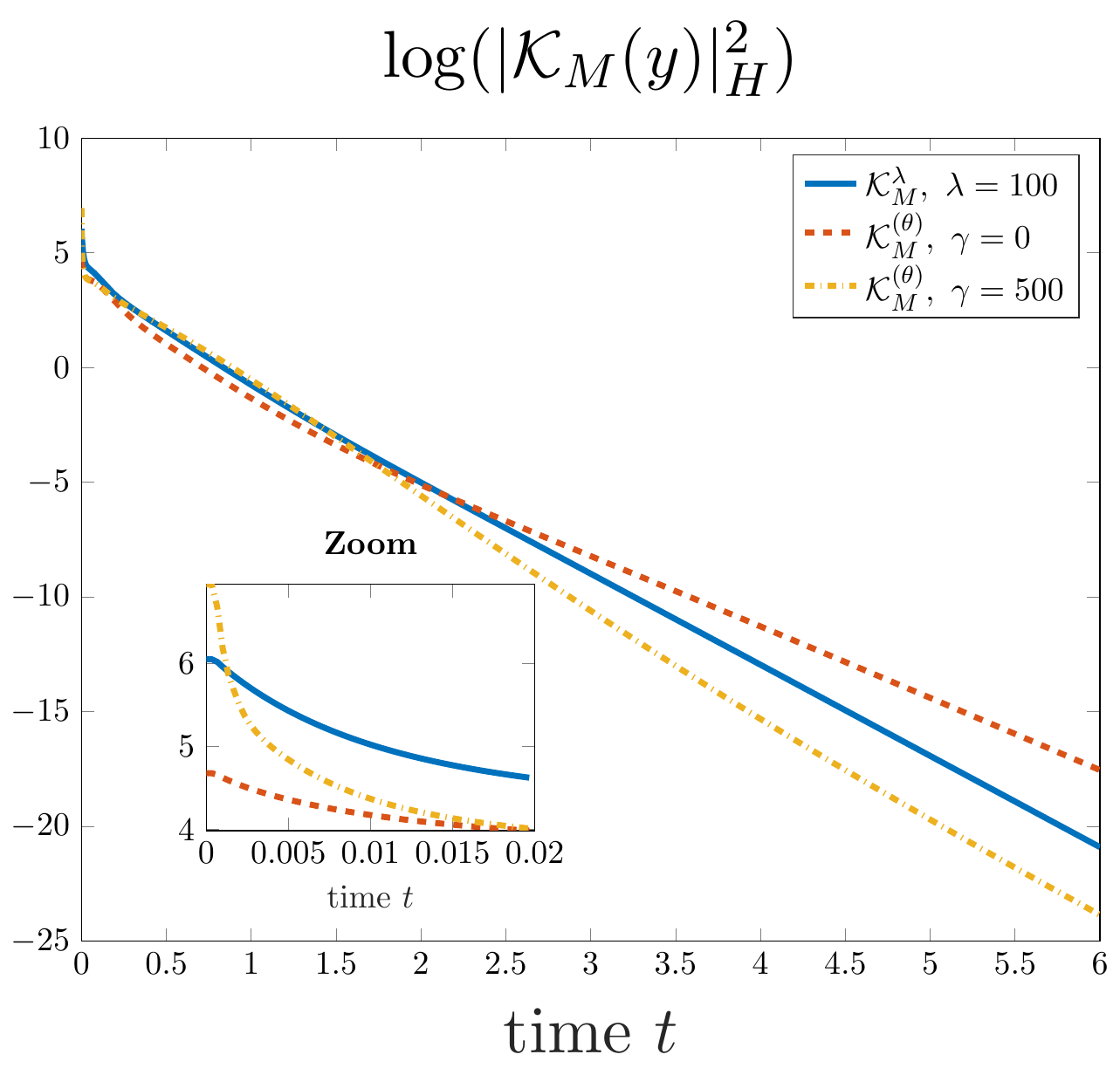}}
\caption{Evolution of state and control norms for~$\bar{y}^2_0$.}
\label{Fig:NNsecondinitial}
\end{figure}

\begin{table}[!htb]
\centering
\begin{minipage}{.48\linewidth}
    \centering

    \medskip
\begin{tabular}{  l c c c c  }

\multicolumn{5}{c}{Initial condition $\bar{y}^1_0$}\\
\toprule
$\qquad \mathcal{K}_M$ & &  $|y|^2_{L^2}$ & $|u|^2_{L^2}$ &  $J(y,u)$  \\
\midrule
$\mathcal{K}^\lambda_M$,~$\lambda=100$ & & 0.38 & 69.86 & 35.12\\
$\mathcal{K}^{(\theta)}_M$,~$\gamma=0$ & & 1.08 &  44.86   & 22.97 \\
$\mathcal{K}^{(\theta)}_M$,~$\gamma=500$  & & 0.84 &  50.3 &  25.57\\
\bottomrule
\end{tabular}
\end{minipage}\hfill
\begin{minipage}{.48\linewidth}
    \centering

    \medskip
\begin{tabular}{ l c c c c }

\multicolumn{5}{c}{Initial condition $\bar{y}^2_0$}\\
\toprule
$\qquad \mathcal{K}_M$ & & $|y|^2_{L^2}$ & $|u|^2_{L^2}$ &  $J(y,u)$  \\
\midrule
$\mathcal{K}^\lambda_M$,~$\lambda=100$ & &0.11 & 17.06 & 8.57\\
$\mathcal{K}^{(\theta)}_M$,~$\gamma=0$ & & 0.18  &  10.34   &  5.26\\
$\mathcal{K}^{(\theta)}_M$,~$\gamma=500$ & & 0.16 & 13.08  &  6.62\\
\bottomrule
\end{tabular}
\end{minipage}\hfill

\

\caption{Results for~$ y_0\in\{\bar{y}^1_0,\bar{y}^2_0\}$.}
\label{tab:results}
\end{table}

In Table~\ref{tab:results} the values of objective functional as
well as the space-time norms of the state trajectories, and the controls associated
to the various feedback laws are summarized. In both examples all considered feedbacks
eventually
stabilize the system at (approximately) the same exponential rate. Additionally we
observe that, in each example, the long-term behavior of the feedback controls
is similar. However, the  results significantly differ in the early stages.
First we focus on the case~$y_0=\bar{y}^1_0$.\black As expected, for both,~the orthogonal projection
and the neural network feedback with~$\gamma=500$, the~$H$-norm of~$y$ is a
strictly decreasing function. Note the slightly worse rate of stabilization
for~$\mathcal{K}^{(\theta)}_M$,~$\gamma=500$, in the transient phase. In contrast,
the norm of the state associated to~$\mathcal{K}^{(\theta)}_M$,~$\gamma=0$,
increases on~$[0,1]$. These observations are also reflected in slightly larger
space-time norms of the state trajectories of the neural network controlled systems
 as we can see in Table~\ref{tab:results}, where~$L^2$ stands for~$L^2((0,T),L^2(\Omega))$.\black On the other hand we note that, apart from a short
early phase, the neural network feedback controls are smaller than the ones induced
by the orthogonal projection, resulting in significantly smaller control costs.
As a consequence, the neural network induced feedback
law~$\mathcal{K}^{(\theta)}_M$,~$\gamma=0$, admits the lowest overall objective
functional value among all considered feedback laws followed
by~$\mathcal{K}^{(\theta)}_M$,~$\gamma=500$, with an improvement
of~$35\%$ and~$27\%$, respectively, over the explicit feedback.

We also make the same qualitative and quantitative observations
for the case~$y_0=\bar{y}^2_0$.\black Indeed, the state and control norms for the
various feedbacks show comparable long-time behavior, but they differ in the
early stages. This leads to slightly increased space-time state norms
for the neural network feedback laws at the advantage of significantly
reduced control costs. Overall this results in~$39\%$, for $\gamma=0$, and~$23\%$,
for $\gamma=500$, smaller
objective functional values for the neural network feedback laws in comparison
to the scaled orthogonal projection based feedback law.

\subsubsection{Validation for a large set of initial conditions}
Finally we repeat the previous simulations for a larger number of randomly generated
initial conditions to highlight the semiglobal nature of the considered feedback laws.
For this purpose we uniformly sample functions~$y^j_0$,~$j=1,\dots,100$, from the
validation set
\begin{align*}
\mathbf{Y}_m:= \left \{\,y_0 \in H\;|\;y_0 \in \operatorname{span}\{v_j\}^m_{j=1},~|y_0|_H \leq 1\,\right\}
\end{align*}
where~$m \in \mathbb{N}$ and~$v_j$,~$j=1,\dots,m $, denote the eigenfunctions from
Section~\ref{subsubsec:explicit}. Note that for~$m\leq 5$,~$\mathbf{Y}_m$ is a subset
of the linear span of the training set. For each considered feedback
law~$\mathcal{K}_M$ and initial condition~$y^i_0$ we then compute the associated
states~$y_i$ and feedback controls~$u_i \coloneqq \mathcal{K}_M(P_{\mathcal{U}_M}y_i)$.
The indices of all initial conditions which are successfully stabilized are collected
in the set
\begin{align*}
\mathcal{I}(\mathcal{K}_M):= \left\{\,i \in \{1,\dots,100\}\;|\;J(y_i,u_i)<\infty\,\right\}.
\end{align*}
Moreover we define the numbers of failed and successful stabilizations
by~$\text{N}_{\text{succ}}\coloneqq \# \mathcal{I}(\mathcal{K}_M)$
and~$\text{N}_{\text{fail}} \coloneqq 100-\text{N}_{\text{succ}}$, respectively,
as well as the averaged state/control norms and objective functional values:
\begin{align*}
\mathbb{E}\left\lbrack|y|^2_{L^2} \right\rbrack\coloneqq\frac{1}{\text{N}_{\text{succ}}}\sum_{i\in \mathcal{I}(\mathcal{K}_M)} |y_i|^2_{L^2},~\mathbb{E}\left\lbrack|u|^2_{L^2} \right\rbrack\coloneqq \frac{1}{\text{N}_{\text{succ}}}\sum_{i\in \mathcal{I}(\mathcal{K}_M)} |u_i|^2_{L^2},~\mathbb{E}\left\lbrack J(y,u) \right\rbrack\coloneqq \frac{1}{\text{N}_{\text{succ}}}\sum_{i\in \mathcal{I}(\mathcal{K}_M)} J(y_i,u_i).
\end{align*}
Further $\text{N}_{\text{improv}}$ is the number of initial conditions for which
$\mathcal{K}^\theta_M$ successfully stabilizes and the value of the objective functional
is small than for the $\mathcal{K}^\lambda_M$ feedback law.
In order to assess the performance of the various neural network feedback laws
in comparison to the orthogonal projection we also compute the average as well
as the best and worst improvement of the objective functional values
\begin{align*}
&\text{Avg. Improv.} \coloneqq \left( \sum_{i\in \mathcal{I}(\mathcal{K}_M)} \left( J(y_i,u_i)-J(y^\lambda_i,u^\lambda_i) \right) \right) / \sum_{i\in \mathcal{I}(\mathcal{K}_M)} J(y^\lambda_i,u^\lambda_i), \\
\text{Best} \coloneqq - &\max_{i \in \mathcal{I}(\mathcal{K}_M)} \left\{ \frac{J(y^\lambda_i,u^\lambda_i)-J(y_i,u_i)}{J(y^\lambda_i,u^\lambda_i)}\right\} \ ,~\text{Worst} \coloneqq -\min_{i\in \mathcal{I}(\mathcal{K}_M)} \left\{ \frac{J(y^\lambda_i,u^\lambda_i)-J(y_i,u_i)}{J(y^\lambda_i,u^\lambda_i)}\right\}
\end{align*}
where~$y^\lambda_i$ and~$u^\lambda_i$ denote the state and feedback controls
associated to~$\mathcal{K}^\lambda_M$.
The computed results for~$m=5,7,10$ can be found in Table~\ref{tab:val1}
and~\ref{tab:val2}, respectively.

\begin{table}[!htb]
\centering
    \medskip
\begin{tabular}{  l c c c c c c c c c  }

\toprule
$\qquad \mathcal{K}_M$ & & $\text{N}_{\text{fail}}$ & $\text{N}_{\text{improv}}$ &  $\mathbb{E}\left\lbrack|y|^2_{L^2} \right\rbrack$ & $\mathbb{E}\left\lbrack|u|^2_{L^2}\right \rbrack$ &  $ \mathbb{E}\left\lbrack J(y,u)\right \rbrack$ & Avg. Improv. & Best & Worst \\
\midrule
$\mathcal{K}^\lambda_M$,~$\lambda=100$ &   & 0 & 0 & 0.07 & 12.6 & 6.35 & $+0 \%$ & $+0\%$ & $+0\%$ \\
$\mathcal{K}^{(\theta)}_M$,~$\gamma=0$ &   & 2 & 98 & 0.2 & 6.3 & 3.24 & $-44\%$ & $-96\%$ & $-6\%$ \\
$\mathcal{K}^{(\theta)}_M$,~$\gamma=500$ &   & 0 & 82 & 0.1 & 10.1 & 5.12 & $-19\%$ & $-86\%$ & $+46\%$ \\
\bottomrule
\end{tabular}

\

\caption{Validation results for~$m=5$.}
\label{tab:val1}
\end{table}

\begin{table}[!htb]
\centering
    \medskip
\begin{tabular}{  l c c c c c c c c c  }

\toprule
$\qquad \mathcal{K}_M$ & & $\text{N}_{\text{fail}}$ & $\text{N}_{\text{improv}}$ &  $\mathbb{E}\left\lbrack|y|^2_{L^2} \right\rbrack$ & $\mathbb{E}\left\lbrack|u|^2_{L^2}\right \rbrack$ &  $ \mathbb{E}\left\lbrack J(y,u)\right \rbrack$ & Avg. Improv. & Best & Worst \\
\midrule
$\mathcal{K}^\lambda_M$,~$\lambda=100$ &   & 0 & 0 & 0.06 &  10.6& 5.34 & $+0\%$ & $+0\%$ & $+0\%$ \\
$\mathcal{K}^{(\theta)}_M$,~$\gamma=0$ &   & 2 & 98 & 0.15 & 4.6  & 2.37  & $-50\%$ & $-95\%$ & $-12\%$ \\
$\mathcal{K}^{(\theta)}_M$,~$\gamma=500$ &   & 0 & 53 & 0.08 & 10 & 5.04 & $-6\%$ & $-72\%$ & $+67\%$ \\
\bottomrule
\end{tabular}

\

\caption{Validation results for~$m=7$.}
\label{tab:val2}
\end{table}

\begin{table}[!htb]
\centering
    \medskip
\begin{tabular}{  l c c c c c c c c c  }

\toprule
$\qquad \mathcal{K}_M$ & & $\text{N}_{\text{fail}}$ & $\text{N}_{\text{improv}}$ &  $\mathbb{E}\left\lbrack|y|^2_{L^2} \right\rbrack$ & $\mathbb{E}\left\lbrack|u|^2_{L^2}\right \rbrack$ &  $ \mathbb{E}\left\lbrack J(y,u)\right \rbrack$ & Avg. Improv. & Best & Worst \\
\midrule
$\mathcal{K}^\lambda_M$,~$\lambda=100$ &   & 0 & 0 & 0.05 &  8.3& 4.18 & $+0\%$ & $+0\%$ & $+0\%$ \\
$\mathcal{K}^{(\theta)}_M$,~$\gamma=0$ &   & 1 & 99 & 0.11 & 3.7  & 1.9  & $-50\%$ & $-94\%$ & $-12\%$ \\
$\mathcal{K}^{(\theta)}_M$,~$\gamma=500$ &   & 0 & 16 & 0.06 & 10.3 & 5.12 & $+23\%$ & $-36\%$ & $+164\%$ \\
\bottomrule
\end{tabular}

\

\caption{Validation results for~$m=10$.}
\label{tab:val3}
\end{table}
As a starting point for the discussion of these results, note that the orthogonal
projection~$\mathcal{K}^\lambda_M$ and the monotone neural network
feedback~$\mathcal{K}^{(\theta)}_M$,~$\gamma=500$, stabilize all sampled initial
conditions. In contrast,~$\mathcal{K}^{(\theta)}_M$,~$\gamma=0$, which is
learned~\emph{without} the monotony enhancing penalty term fails in a small
percentage of cases. From a quantitative perspective, we make similar observations
as for the explicit initial conditions in the previous section. Let us first focus
on~$\mathcal{K}^{(\theta)}_M$,~$\gamma=0$. In all tests we
have~$\text{N}_{\text{succ}}=\text{N}_{\text{improv}}$.
Thus, if~$\mathcal{K}^{(\theta)}_M$ successfully stabilizes one of the sampled
initial condition, it admits a smaller objective functional value
than~$\mathcal{K}^\lambda_M$ with an average improvement of about~$50\%$.
As in Section~\ref{subsubsec:explicit}, this traces back to a significant
decrease in the norm of the feedback controls. Finally the  results indicate
that the improvement of the neural network feedback law for~$\gamma=0$ is
independent of~$m\in\mathbb{N}$. This suggests that while we are learning
the feedback on a relatively small set of~$10$ initial conditions it successfully
generalizes to functions outside of the training set without losing its stabilizing
properties and without a loss of performance.

The behavior of the neural network feedback learned \emph{with} the
monotony enhancing penalty term differs in certain key aspects. For example,
while~$\mathcal{K}^{(\theta)}_M$,~$\gamma=500$, stabilizes in all  considered
test cases, that is,~$\text{N}_{\text{succ}}=100$, it does not always improve over
the orthogonal projection and thus~$\text{N}_{\text{improv}}<100$. Additionally
its performance in comparison to~$\mathcal{K}^\lambda_M$ deteriorates for
growing~$m\in\mathbb{N}$. Indeed, while we observe smaller objective
function values for~$\text{N}_{\text{improv}}=86$ initial conditions with an
average improvement of~$-20\%$ over~$\mathcal{K}^\lambda_M$ for~$m=5$, this
decreases to~$\text{N}_{\text{improv}}=53$ at~$-6\%$ for~$m=7$. Finally,
for~$m=10$, the neural network feedback only improves in
$\text{N}_{\text{improv}}=16$ cases and, on average, performs worse than
the orthogonal projection feedback. To explain this loss of performance
we point out that the average state and controls norms as well as the
averaged objective functional values associated~to~$\mathcal{K}^{\lambda}_M$
become smaller as~$m \in \mathbb{N}$ grows. The same holds true
for~$\mathcal{K}^{(\theta)}_M$,~$\gamma=0$. In contrast,~the monotone
neural network feedback law fails to adapt this improved behavior on the
larger validation sets, that is, the average control cost and the objective
functional value is independent of~$m\in\mathbb{N}$. Thus it generalizes
worse than the other feedbacks to initial conditions outside of the training set.

In summary, on the one hand, the results of this section show the great potential
and success of learning feedback laws for the stabilization of unstable, nonlinear,
parabolic systems. On the other hand, they also point out certain limitations of
this approach which stimulate further research. For example, the performance
of the neural network feedback law learned~\emph{without} the penalty term
could be further improved by adaptively enlarging the training set with those
initial conditions from~$\mathbf{Y}_m$ for which~$\mathcal{K}^{(\theta)}_M$ fails.
Moreover, the discussion in the last paragraph suggests that the performance of
the monotone neural network feedback law is more susceptible to the choice of
the training set. Clearly, this raises the question on how to choose the initial
conditions for the training of the network in an optimal way as wells as on the
robustness of the computed results with respect to changes in the training set.

\appendix
\gdef\thesection{\Alph{section}}
\section*{Appendix}\normalsize
\setcounter{section}{1}
\setcounter{theorem}{0} \setcounter{equation}{0}
\numberwithin{equation}{section}

\subsection{Proof of Lemma~\ref{L:NN2}}\label{Apx:proofL:NN2}

Using Lemma~\ref{L:NN1} with~$(y_1,y_2)=(y,0)$, we find
\begin{align}
2(\clN(y),Ay)_H&\le\gamma_2 \norm{y}{\rmD(A)}^{2}
  +D_1
 \sum\limits_{j=1}^n \norm{y}{\rmD(A)}^\frac{2\zeta_{2j}}{1-\delta_{2j}}
\norm{y}{V}^{\frac{2\zeta_{1j}+2\delta_{1j}}{1-\delta_{2j}}},\quad\mbox{for all}\quad \gamma_2>0,\label{est-NNyAy1}
\end{align}
with~$D_1\coloneqq\left(1+\gamma_2^{-\frac{1+\|\delta_2\|}{1-\|\delta_2\|} }\right) \overline C_{\clN1}$. Next, we recall the young inequality in the form
\begin{align*}
ab\le \tfrac{1}{s}\eta^sa^s+\tfrac{s-1}{s}\eta^{-\frac{s}{s-1}}a^\frac{s}{s-1}, \quad\mbox{for all}\quad a\ge0,\;b\ge0,\;\eta>0,\;s>1,
\end{align*}
(cf.~\cite[Appendix~A.1]{Rod20}), which allow us to obtain, with
\[
s=s_j=\tfrac{1-\delta_{2j}}{\zeta_{2j}}>1, \qquad
\tfrac{s_j-1}{s_j}=\tfrac{1-\delta_{2j}-\zeta_{2j}}{1-\delta_{2j}},\qquad\mbox{in case}\quad \zeta_{2j}>0,
\]
the inequality
\begin{align*}
\norm{y}{\rmD(A)}^\frac{2\zeta_{2j}}{1-\delta_{2j}}
\norm{y}{V}^{\frac{2\zeta_{1j}+2\delta_{1j}}{1-\delta_{2j}}}
&\le \tfrac{\zeta_{2j}}{1-\delta_{2j}}\eta_j^\frac{1-\delta_{2j}}{\zeta_{2j}}\norm{y}{\rmD(A)}^2+\tfrac{1-\delta_{2j}-\zeta_{2j}}{1-\delta_{2j}}
 \eta_j^{-\frac{1-\delta_{2j}}{1-\delta_{2j}-\zeta_{2j}}}
\norm{y}{V}^{\frac{2\zeta_{1j}+2\delta_{1j}}{1-\delta_{2j}-\zeta_{2j}}}, \quad\mbox{for all}\quad \eta_j>0.
\end{align*}

Now, for any given~$\gamma_3>0$, we choose
\[
\eta_j= (\tfrac{1-\delta_{2j}}{\zeta_{2j}}\gamma_3)^\frac{\zeta_{2j}}{1-\delta_{2j}},\quad\mbox{in case}\quad \zeta_{2j}>0.
\]
which gives us
\begin{align*}
\norm{y}{\rmD(A)}^\frac{2\zeta_{2j}}{1-\delta_{2j}}
\norm{y}{V}^{\frac{2\zeta_{1j}+2\delta_{1j}}{1-\delta_{2j}}}
&\le \gamma_3\norm{y}{\rmD(A)}^2+\tfrac{1-\delta_{2j}-\zeta_{2j}}{1-\delta_{2j}}
  (\tfrac{1-\delta_{2j}}{\zeta_{2j}}\gamma_3)^{-\frac{\zeta_{2j}}{1-\delta_{2j}-\zeta_{2j}}}
\norm{y}{V}^{\frac{2\zeta_{1j}+2\delta_{1j}}{1-\delta_{2j}-\zeta_{2j}}}, \quad\mbox{in case}\quad \zeta_{2j}>0.
\end{align*}

Recalling~\eqref{est-NNyAy1}, it follows that
\begin{subequations}\label{est-NNyAy2}
\begin{align}
2(\clN(y),Ay)_H
&\le\gamma_2 \norm{y}{\rmD(A)}^{2}
+D_1 \sum\limits_{
\begin{subarray}{c}1\le j\le n\\ \zeta_{2j}=0\end{subarray}} \norm{y}{V}^{\frac{2\zeta_{1j}+2\delta_{1j}}{1-\delta_{2j}}}
  +D_1 \sum\limits_{
\begin{subarray}{c}1\le j\le n\\ \zeta_{2j}>0\end{subarray}}\left(\gamma_3\norm{y}{\rmD(A)}^2+
D_{2}\norm{y}{V}^{\frac{2\zeta_{1j}+2\delta_{1j}}{1-\delta_{2j}-\zeta_{2j}}}\right)
\notag\\
&\le(\gamma_2 +D_1\gamma_3 n)\norm{y}{\rmD(A)}^{2}
+D_1(1+D_2) \sum\limits_{j=1}^n
\norm{y}{V}^{\frac{2\zeta_{1j}+2\delta_{1j}}{1-\delta_{2j}-\zeta_{2j}}},
\intertext{for all~$\gamma_2>0$, $\gamma_3>0$, with}
D_2&=\max_{
\begin{subarray}{c}1\le j\le n\\ \zeta_{2j}\ne0\end{subarray}}
\tfrac{1-\delta_{2j}-\zeta_{2j}}{1-\delta_{2j}}
  (\tfrac{1-\delta_{2j}}{\zeta_{2j}}\gamma_3)^{-\frac{\zeta_{2j}}{1-\delta_{2j}-\zeta_{2j}}}.
\end{align}
\end{subequations}

Now for an arbitrary  given~$\gamma_4>0$, we may set~$\gamma_2=\frac{\gamma_4}2$ and~$\gamma_3=\frac{\gamma_4}{2nD_1}$, leading us to
\begin{align}
2(\clN(y),Ay)_H
\le \gamma_4 \norm{y}{\rmD(A)}^{2}
+D_1(1+D_2) \sum\limits_{j=1}^n
\norm{y}{V}^{\frac{2\zeta_{1j}+2\delta_{1j}}{1-\delta_{2j}-\zeta_{2j}}},
\quad\mbox{for all}\quad \gamma_4>0.
\end{align}

Observe that
\begin{align*}
\tfrac{2\zeta_{1j}+2\delta_{1j}}{1-\delta_{2j}-\zeta_{2j}}\ge2
&\quad\Longleftrightarrow\quad
\zeta_{1j}+\delta_{1j}\ge 1-\delta_{2j}-\zeta_{2j}
\quad\Longleftrightarrow\quad
\delta_{2j}+\delta_{1j}-1\ge -\zeta_{1j}-\zeta_{2j},
\end{align*}
hence, from Assumption~\ref{A:NN}, we have that
\[
p_j\coloneqq 2-\tfrac{2\zeta_{1j}+2\delta_{1j}}{1-\delta_{2j}-\zeta_{2j}}\ge0
\]
and can write
\begin{align*}
2(\clN(y),Ay)_H
&\le \gamma_4 \norm{y}{\rmD(A)}^{2}
+D_1(1+D_2) \norm{y}{V}^2\sum\limits_{j=1}^n
\norm{y}{V}^{p_j}\\
&\le \gamma_4 \norm{y}{\rmD(A)}^{2}
+nD_1(1+D_2) \norm{y}{V}^2(1+\norm{y}{V}^{\|p\|})
\quad\mbox{for all}\quad \gamma_4>0,
\intertext{with}
\|p\|&=\max_{1\le j\le n}p_j,\qquad
D_1=\left(1+(\tfrac{\gamma_4}2)^{-\frac{1+\|\delta_2\|}{1-\|\delta_2\|} }\right) \overline C_{\clN1},\\ \quad\mbox{and}\quad
D_2&=\max_{
\begin{subarray}{c}1\le j\le n\\ \zeta_{2j}\ne0\end{subarray}}
\tfrac{1-\delta_{2j}-\zeta_{2j}}{1-\delta_{2j}}
  (\tfrac{1-\delta_{2j}}{\zeta_{2j}}\tfrac{\gamma_4}{2nD_1})^{-\frac{\zeta_{2j}}{1-\delta_{2j}-\zeta_{2j}}}.
\end{align*}
This ends the proof.\qed


{\small
\bibliographystyle{plainurl}
\bibliography{ParabolicDNN}

\begin{thebibliography}{10}

\bibitem{AzmiRod20}
B.~Azmi and S.~S. Rodrigues.
\newblock Oblique projection local feedback stabilization of nonautonomous
  semilinear damped wave-like equations.
\newblock {\em J. Differential Equations}, 269(7):6163--6192, 2020.
\newblock \href {http://dx.doi.org/10.1016/j.jde.2020.04.033}
  {\path{doi:10.1016/j.jde.2020.04.033}}.

\bibitem{AzouaniTiti14}
A.~Azouani and E.~S. Titi.
\newblock Feedback control of nonlinear dissipative systems by finite
  determining parameters -- a reaction-diffusion paradigm.
\newblock {\em Evol. Equ. Control Theory}, 3(4):579--594, 2014.
\newblock \href {http://dx.doi.org/10.3934/eect.2014.3.579}
  {\path{doi:10.3934/eect.2014.3.579}}.

\bibitem{BadTakah11}
M.~Badra and T.~Takahashi.
\newblock Stabilization of parabolic nonlinear systems with finite dimensional
  feedback or dynamical controllers: Application to the {N}avier--{S}tokes
  system.
\newblock {\em SIAM J. Control Optim.}, 49(2):420--463, 2011.
\newblock \href {http://dx.doi.org/10.1137/090778146}
  {\path{doi:10.1137/090778146}}.

\bibitem{Ball77}
J.~M. Ball.
\newblock Remarks on blow-up and nonexistence theorems for nonlinear evolution
  equations.
\newblock {\em Q. J. Math.}, 28(4):473--486, 1977.
\newblock \href {http://dx.doi.org/10.1093/qmath/28.4.473}
  {\path{doi:10.1093/qmath/28.4.473}}.

\bibitem{BalKrstic00}
A.~Balogh and M.~Krstic.
\newblock {B}urgers' equation with nonlinear boundary feedback:
  $h^1$~stability, well-posedness and simulation.
\newblock {\em Math. Probl. Engineering}, 6:189--200, 2000.
\newblock \href {http://dx.doi.org/10.1155/S1024123X00001320}
  {\path{doi:10.1155/S1024123X00001320}}.

\bibitem{Barbu12}
V.~Barbu.
\newblock Stabilization of {N}avier--{S}tokes equations by oblique boundary
  feedback controllers.
\newblock {\em SIAM J. Control Optim.}, 50(4):2288--2307, 2012.
\newblock \href {http://dx.doi.org/10.1137/110837164}
  {\path{doi:10.1137/110837164}}.

\bibitem{Barbu_TAC13}
V.~Barbu.
\newblock Boundary stabilization of equilibrium solutions to parabolic
  equations.
\newblock {\em IEEE Trans. Automat. Control}, 58(9):2416--2420, 2013.
\newblock \href {http://dx.doi.org/10.1109/TAC.2013.2254013}
  {\path{doi:10.1109/TAC.2013.2254013}}.

\bibitem{BarbuLasTri06}
V.~Barbu, I.~Lasiecka, and R.~Triggiani.
\newblock Abstract settings for tangential boundary stabilization of
  {N}avier--{S}tokes equations by high- and low-gain feedback controllers.
\newblock {\em Nonlinear Anal.}, 64(12):2704--2746, 2006.
\newblock \href {http://dx.doi.org/10.1016/j.na.2005.09.012}
  {\path{doi:10.1016/j.na.2005.09.012}}.

\bibitem{BarRodShi11}
V.~Barbu, S.~S. Rodrigues, and A.~Shirikyan.
\newblock Internal exponential stabilization to a nonstationary solution for
  {3D} {N}avier--{S}tokes equations.
\newblock {\em SIAM J. Control Optim.}, 49(4):1454--1478, 2011.
\newblock \href {http://dx.doi.org/10.1137/100785739}
  {\path{doi:10.1137/100785739}}.

\bibitem{BarbuTri04}
V.~Barbu and R.~Triggiani.
\newblock Internal stabilization of {N}avier--{S}tokes equations with
  finite-dimensional controllers.
\newblock {\em Indiana Univ. Math. J.}, 53(5):1443--1494, 2004.
\newblock \href {http://dx.doi.org/10.1512/iumj.2004.53.2445}
  {\path{doi:10.1512/iumj.2004.53.2445}}.

\bibitem{B19}
D.~P. Bertsekas.
\newblock {\em Reinforcement Learning and Optimal Control}.
\newblock Athena Scientific, Belmont, Massachusetts, 2019.

\bibitem{CochranVazquezKrstic06}
J.~Cochran, R.~Vazquez, and M.~Krstic.
\newblock Backstepping boundary control of {N}avier--{S}tokes channel flow: A
  {3D} extension.
\newblock In {\em Proceedings of the 2006 American Control Conference,
  Minneapolis, Minnesota, USA}, pages 769--774, 6 2006.
\newblock URL: \url{10.1109/ACC.2006.1655449}.

\bibitem{DemengelDem12}
F.~Demengel and G.~Demengel.
\newblock {\em Functional Spaces for the Theory of Elliptic Partial
  Differential Equations}.
\newblock Universitext. Springer, 2012.
\newblock \href {http://dx.doi.org/10.1007/978-1-4471-2807-6}
  {\path{doi:10.1007/978-1-4471-2807-6}}.

\bibitem{DKK19}
Sergey Dolgov, Dante Kalise, and Karl Kunisch.
\newblock Tensor decompositions for high-dimensional hamilton-jacobi-bellman
  equations, 2019.
\newblock \href {http://arxiv.org/abs/1908.01533} {\path{arXiv:1908.01533}}.

\bibitem{E94}
W.~E.
\newblock Dynamics of vortex liquids in ginzburg-landau theories with
  applications to superconductivity.
\newblock {\em Phys. Rev. B}, 50:1126--1135, 1994.
\newblock \href {http://dx.doi.org/10.1103/PhysRevB.50.1126}
  {\path{doi:10.1103/PhysRevB.50.1126}}.

\bibitem{Fisher37}
R.~A. Fisher.
\newblock The wave of advance of advantageous genes.
\newblock {\em Ann. Human Genetics}, 7(4):355--369, 1937.
\newblock \href {http://dx.doi.org/10.1111/j.1469-1809.1937.tb02153.x}
  {\path{doi:10.1111/j.1469-1809.1937.tb02153.x}}.

\bibitem{GrishakovDegtDegtElesinKruglov12}
S.~Grishakov, P.N. Degtyarenko, N.N. Degtyarenko, V.F. Elesin, and V.S.
  Kruglov.
\newblock Time dependent {G}inzburg--{L}andau equations for modeling vortices
  dynamics in type-{II} superconductors with defects under a transport current.
\newblock {\em Physics Procedia}, 36:1206--1210, 2012.
\newblock \href {http://dx.doi.org/10.1016/j.phpro.2012.06.202}
  {\path{doi:10.1016/j.phpro.2012.06.202}}.

\bibitem{GugatTroeltzsch15}
M.~Gugat and F.~Troeltzsch.
\newblock Boundary feedback stabilization of the schlögl system.
\newblock {\em Automatica J. IFAC}, 51:192--1199, 2015.
\newblock \href {http://dx.doi.org/10.1016/j.automatica.2014.10.106}
  {\path{doi:10.1016/j.automatica.2014.10.106}}.

\bibitem{HalanayMureaSafta13}
A.~Halanay, C.~M. Murea, and C.~A. Safta.
\newblock Numerical experiment for stabilization of the heat equation by
  {D}irichlet boundary control.
\newblock {\em Numer. Funct. Anal. Optim.}, 34(12):1317--1327, 2013.
\newblock \href {http://dx.doi.org/10.1080/01630563.2013.808210}
  {\path{doi:10.1080/01630563.2013.808210}}.

\bibitem{KrsticMagnVazq09}
M.~Krstic, L.~Magnis, and R.~Vazquez.
\newblock Nonlinear control of the viscous {B}urgers equation: Trajectory
  generation, tracking, and observer design.
\newblock {\em J. Dyn. Syst. Meas. Control}, 131(2):021012(1--8), 2009.
\newblock \href {http://dx.doi.org/10.1115/1.3023128}
  {\path{doi:10.1115/1.3023128}}.

\bibitem{KunRod19-cocv}
K.~Kunisch and S.~S. Rodrigues.
\newblock Explicit exponential stabilization of nonautonomous linear
  parabolic-like systems by a finite number of internal actuators.
\newblock {\em ESAIM Control Optim. Calc. Var.}, 25, 2019.
\newblock Art~67.
\newblock \href {http://dx.doi.org/10.1051/cocv/2018054}
  {\path{doi:10.1051/cocv/2018054}}.

\bibitem{KunRod19-dcds}
K.~Kunisch and S.~S. Rodrigues.
\newblock Oblique projection based stabilizing feedback for nonautonomous
  coupled parabolic-{\sc ode} systems.
\newblock {\em Discrete Contin. Dyn. Syst.}, 39(11):6355--6389, 2019.
\newblock \href {http://dx.doi.org/10.3934/dcds.2019276}
  {\path{doi:10.3934/dcds.2019276}}.

\bibitem{KunischWalter_arx20}
K.~Kunisch and D.~Walter.
\newblock Semiglobal optimal feedback stabilization of autonomous systems via
  deep neural network approximation.
\newblock Arxiv:2002.08625v1[math.OC], 2020.
\newblock (submitted).
\newblock URL: \url{https://arxiv.org/abs/2002.08625}.

\bibitem{Le20}
D.~Le.
\newblock Global existence for some cross diffusion systems with equal cross
  diffusion/reaction rates.
\newblock {\em Adv. Nonlinear Stud. (pub. online)}, 2020.
\newblock \href {http://dx.doi.org/10.1515/ans-2020-2096}
  {\path{doi:10.1515/ans-2020-2096}}.

\bibitem{Levine73}
H.~A. Levine.
\newblock Some nonexistence and instability theorems for solutions of formally
  parabolic equations of the form {${P}u_t=-{A}u+{\mathcal F}(u)$}.
\newblock {\em Arch. Ration. Mech. Anal.}, 51(5):371--386, 1973.
\newblock \href {http://dx.doi.org/10.1007/BF00263041}
  {\path{doi:10.1007/BF00263041}}.

\bibitem{LunasinTiti17}
E.~Lunasin and E.~S. Titi.
\newblock Finite determining parameters feedback control for distributed
  nonlinear dissipative systems -- a computational study.
\newblock {\em Evol. Equ. Control Theory}, 6(4):535--557, 2017.
\newblock \href {http://dx.doi.org/10.3934/eect.2017027}
  {\path{doi:10.3934/eect.2017027}}.

\bibitem{MerleZaag98}
F.~Merle and H.~Zaag.
\newblock Optimal estimates for blowup rate and behavior for nonlinear heat
  equations.
\newblock {\em Comm. Pure Appl. Math.}, 51(2):139--196, 1998.
\newblock \href
  {http://dx.doi.org/10.1002/(SICI)1097-0312(199802)51:2<139::AID-CPA2>3.0.CO;2-C}
  {\path{doi:10.1002/(SICI)1097-0312(199802)51:2<139::AID-CPA2>3.0.CO;2-C}}.

\bibitem{OlmosShizgal06}
D.~Olmos and B.~D. Shizgal.
\newblock A pseudospectral method of solution of {F}isher’s equation.
\newblock {\em Journal of Computational and Applied Mathematics},
  193(1):219--242, 2006.
\newblock \href {http://dx.doi.org/10.1016/j.cam.2005.06.028}
  {\path{doi:10.1016/j.cam.2005.06.028}}.

\bibitem{PhanRod18-mcss}
D.~Phan and S.~S. Rodrigues.
\newblock Stabilization to trajectories for parabolic equations.
\newblock {\em Math. Control Signals Syst.}, 30(2), 2018.
\newblock Art~11.
\newblock \href {http://dx.doi.org/10.1007/s00498-018-0218-0}
  {\path{doi:10.1007/s00498-018-0218-0}}.

\bibitem{Raymond19}
J.-P. Raymond.
\newblock Stabilizability of infinite-dimensional systems by finite-dimensional
  controls.
\newblock {\em Comput. Methods Appl. Math.}, 19(4):797--811, 2019.
\newblock \href {http://dx.doi.org/10.1515/cmam-2018-0031}
  {\path{doi:10.1515/cmam-2018-0031}}.

\bibitem{Rod20}
S.~S. Rodrigues.
\newblock Semiglobal exponential stabilization of nonautonomous semilinear
  parabolic-like systems.
\newblock {\em Evol. Equ. Control Theory}, 9(3):635--672, 2020.
\newblock \href {http://dx.doi.org/10.3934/eect.2020027}
  {\path{doi:10.3934/eect.2020027}}.

\bibitem{Rod21-sicon}
S.~S. Rodrigues.
\newblock Oblique projection exponential dynamical observer for nonautonomous
  linear parabolic-like equations.
\newblock {\em SIAM J. Control Optim.}, 59(1):464--488, 2021.
\newblock \href {http://dx.doi.org/10.1137/19M1278934}
  {\path{doi:10.1137/19M1278934}}.

\bibitem{Rod_pp20-CL}
S.~S. Rodrigues.
\newblock Oblique projection output-based feedback stabilization of
  nonautonomous parabolic equations.
\newblock {\em Automatica J. IFAC (accepted)}, 2021.
\newblock RICAM Report No.~2020-33.
\newblock URL: \url{https://www.ricam.oeaw.ac.at/publications/ricam-reports/}.

\bibitem{Schloegl72}
F.~Schl\"ogl.
\newblock Chemical reaction models for non-equilibrium phase transitions.
\newblock {\em Z. Physik}, 253:147--161, 1972.
\newblock \href {http://dx.doi.org/10.1007/BF01379769}
  {\path{doi:10.1007/BF01379769}}.

\bibitem{ShigesadaKawaTera79}
N.~Shigesada, K.~Kawasaki, and E.~Teramoto.
\newblock Spatial segregation of interacting species.
\newblock {\em J. Theor. Biol.}, 79(1):83--99, 1979.
\newblock \href {http://dx.doi.org/10.1016/0022-5193(79)90258-3}
  {\path{doi:10.1016/0022-5193(79)90258-3}}.

\bibitem{TsubakinoKrsticHara12}
D.~Tsubakino, M.~Krstic, and Sh. Hara.
\newblock Backstepping control for parabolic {PDE}s with in-domain actuation.
\newblock In {\em Proceedings of the American Control Conference (ACC),
  Montréal, Canada}, pages 2226--2231, 2012.
\newblock \href {http://dx.doi.org/10.1109/ACC.2012.6315358}
  {\path{doi:10.1109/ACC.2012.6315358}}.

\bibitem{Wu74}
M.Y. Wu.
\newblock A note on stability of linear time-varying systems.
\newblock {\em IEEE Trans. Automat. Control}, 19(2):162, 1974.
\newblock \href {http://dx.doi.org/10.1109/TAC.1974.1100529}
  {\path{doi:10.1109/TAC.1974.1100529}}.

\end{thebibliography}
}

\bigskip
\paragraph{\bf Acknowledgements.}  Karl Kunisch and S\'ergio Rodrigues were supported in part by the ERC advanced grant 668998 (OCLOC) under the EU’s H2020 research program.
S\'ergio Rodrigues also acknowledges partial support from Austrian Science Fund (FWF): P 33432-NBL.

\end{document}